\def \R {\mathbb R}
\def \N{\mathbb{N}}
\newcommand{\Ricc}{\mathrm{Ric}}
\def\beq{\begin{equation}}
\def\eeq{\end{equation}}
\newcommand{\beal}[1]{\begin{eqnarray}\label{#1}}
\newcommand{\eeal}[1]{\label{#1}\end{eqnarray}}
\newcommand{\eq}[1]{\eqref{#1}}
\newcommand{\tr}{\mbox{\rm tr}}
\newcommand{\dive}{\operatorname{div}}
\newcommand{\laplaciano}[1]{\Delta_{#1}\,}     
\newcommand{\hyp}{M}
\newcommand{\threeg}{g}
\newcommand{\fourg}{\mathbf{g}}
\newcommand{\tthreeg}{\tilde\threeg}
\newcommand{\calD}{{\mathcal D}}
\newcommand{\tcalD}{\widetilde\calD}
\newcommand{\tD}{\tilde D}
\renewcommand{\div}{\mathop{\rm div}}
\newcommand{\tsigma}{{\tilde{\sigma}}}    
\newtheorem{theorem}{Theorem}[section]
\newtheorem{Theorem}[theorem]{Theorem}
\newtheorem{Corollary} [theorem]{Corollary}
\newtheorem{Proposition} [theorem] {Proposition}
\theoremstyle{definition}
\theoremstyle{remark}
\newcommand{\Hess}{\mathop{\rm Hess}}
\newcommand{\Div}{\mathrm{div}}
\begin{document}
\title{Scalar Curvature and the Einstein Constraint Equations}
\author{Justin Corvino} 
\address{Department of Mathematics, Lafayette College, Easton, PA 18042}
\email{corvinoj@lafayette.edu}
\author{Daniel Pollack}
\address{Department of Mathematics, University of Washington, Seattle, WA 98195}
\email{pollack@math.washington.edu}

\subjclass[2010]{Primary 53C21, 83C99}

\begin{abstract}  We survey some results on scalar curvature and properties of solutions to the Einstein constraint equations.  Topics include an extended discussion of asymptotically flat solutions to the constraint equations, including recent results on the geometry of the center of mass of such solutions.  We  also review methods to construct solutions to the constraint equations, including the conformal method, as well as gluing techniques for which it is important to understand both conformal and non-conformal deformations of the scalar curvature.  \end{abstract}

\maketitle

\section{Introduction}
The mathematical influence of Richard M. Schoen can be measured in many ways.  His research has fundamentally shaped geometric analysis, and his results form many cornerstones within geometry, partial differential equations and general relativity.  Evidence of his influence includes the large number of his students who continue to work in these areas.  As two of these students, the authors of this contribution are exceedingly grateful for Rick's mathematical insight and generosity.  Many of the results discussed in this article owe a great deal, directly and indirectly, to his continued mathematical vision and reach.

By the second half of the twentieth century, the theory of nonlinear partial differential equations (PDE) had developed sufficiently to allow more sophisticated analysis to be brought to bear on geometric problems.  One such early triumph was achieved by Choquet-Bruhat in 1952 \cite{ChBActa}, when she 
exploited the underlying hyperbolic nature of Einstein's field equation to establish the local well-posedness of the Cauchy problem in general relativity.
This breakthrough ushered in an era of intense study of both the (essentially hyperbolic) evolution problem for Einstein's equation, and of the set of allowable initial data for this problem, which must itself satisfy a nonlinear {\it elliptic} PDE system, the \emph{Einstein constraint equations}.  This era continues to the present day, with many remarkable recent advances and continued interest in the field from both physicists and mathematicians. 

Another watershed for the mathematical development of relativity is the celebrated work of Rick Schoen and S.-T. Yau from the late seventies on the \emph{Positive Mass Theorem}.  Not only did their work employ serious tools of geometric analysis, including partial differential equations and geometric measure theory, to resolve a question motivated by gravitational physics, but they also established a link between the positivity of the mass of an isolated gravitational system and the relationship between positive scalar curvature and topology, a topic of interest to a broad range of mathematicians.  In the early eighties, Schoen brought the Positive Mass Theorem to bear on the resolution of the famous Yamabe problem, providing more evidence to support the development of the mathematical theory of the constraint equations, and inspiring many others to do so. 

In this survey, we describe some of the aspects of the study of scalar curvature and the constraint equations that have inspired us.  Asymptotically flat solutions of the constraints are discussed at some length, including some classic proofs of fundamental results, and some relatively recent work on the geometry of such initial data sets.  We then present the conformal method for constructing solutions to the Einstein constraint equations based on the specification of certain components (\emph{free data}) of the initial data set, and indicate connections to the Yamabe problem.  In the final section, we present some results on \emph{gluing} solutions of the constraint equations together, in such a way that the resulting manifold has initial data that resembles the initial data on the original pieces, and we emphasize how both conformal and non-conformal techniques have proven fruitful, both separately and together, in constructing interesting initial data sets. 
 
Though we include some sketches of proofs and a large collection of references, we do not attempt a complete treatment of any of these topics, nor do we endeavor to make a complete bibliography.  We do hope, however, that we have included enough details so that readers will get a sense of some aspects of the field that have inspired us.  
 
 \subsubsection{Acknowledgments} 
 We would like to thank Adrian Butscher and Lan-Hsuan Huang for their comments and suggestions on an earlier draft.  We also thank Hubert Bray for his support and encouragement in the preparation of this manuscript. 
 

\subsection{Remarks on notation} 
We employ the Einstein summation convention to sum over repeated upper and lower indices.  A semi-colon preceding an index will denote a covariant derivative, whereas a comma will denote a partial derivative.  Let $dv_g$ and $d\mu_g$ be the volume and hypersurface measure, respectively, induced by a metric $g$, and let $\nu$ or $\nu_g$ be the unit normal to an oriented hypersurface.  In case $g=g_E$ is the Euclidean metric, we let the respective measures be $dv_e$ and $d\mu_e$, and let the normal be $\nu_e$.  We use the convention that the Laplacian is the trace of the Hessian. We often use subscripts to denote metric dependence on operators or norms, whereas such subscripts may be omitted for the Euclidean metric, so for example, $\Delta$ is the Euclidean Laplacian.  Finally, we note that $R(g)$ will be the scalar curvature of a metric $g$, whereas we will use ``$R$" on occasion to denote a large radius. 

\section{The Constraint Equations}
\label{CE} 
The field equation of general relativity is the Einstein equation  (with cosmological {constant $\Lambda$}),
\beq\label{eeqn} \Ricc(\fourg) -\frac12 R(\fourg)\fourg + \Lambda \fourg = 8\pi
\mathcal T\,, 
\eeq 
where $\mathcal T$ is the energy-momentum tensor, and the field $\fourg$ is a Lorentzian metric.
We say that a space-time $(\mathcal M, \fourg)$ obeys the \emph{vacuum}
Einstein equation if it obeys the Einstein equation with $\mathcal T
= 0$. 
Given a manifold $\mathcal M$, the left side of Equation~\eq{eeqn}, when expressed in terms of coordinates, forms a system of second order partial differential operators acting on the metric components $\fourg_{\mu\nu}$ which is {\em quasi-linear}, i.e. linear in the highest (second)  derivatives of the metric.  Indeed the system is linear in the second derivatives of $\fourg_{\mu\nu}$ and quadratic in the first derivatives of $\fourg_{\mu\nu}$, with coefficients which are rational functions of the components $\fourg_{\mu\nu}$.  Hence the vacuum Einstein equation constitutes a second order system of quasi-linear partial differential equations for the metric $\fourg$.
In this discussion the manifold $\mathcal M $ has been given, whereas from the evolutionary point of view, which we adopt here, solutions of the Cauchy problem for Einstein's equation yield space-times $\mathcal M$ diffeomorphic to $\R\times\hyp$, where $\hyp$ is an $n$--dimensional manifold {carrying initial data}, i.e., the initial data determines the the space-time topology and differential structure.  For a recent survey of Mathematical Relativity, which places the questions of initial data into context within the study of the Einstein equation and the Cauchy problem, we refer the interested reader to \cite{CGP}.

An \emph{initial data
set} for a vacuum space-time consists of  an $n$-dimensional
manifold $\hyp$ together with a Riemannian metric $\threeg$ and
a symmetric tensor $K$.
In the non-vacuum case we also have a
collection of non-gravitational fields which we collectively
label ${\mathcal F}$ (usually these are sections of a bundle
over $\hyp$). The vacuum
constraint equations express the vanishing of the normal (to $\hyp$)
components of the Einstein equation, and can be expressed in terms of the initial data using the Gauss-Codazzi equations. If
$\threeg$ is the metric induced on a space-like hypersurface  $\hyp$ in
a (time-oriented) Lorentzian manifold $(\mathcal M,\fourg)$, we let  $R
^i{}_{jk\ell}$ be the curvature tensor of $\threeg$.  If we let $K_{ij}$ be the second fundamental
form of $\hyp$ in $\mathcal M$, and let $ {\mathscr R} ^i{}_{jk\ell}$
be the space-time curvature tensor, the Gauss-Codazzi equations
provide the following relationships:
\beal{firsemb} &  R ^i{}_{jk\ell} = {\mathscr R}  ^i{}_{jk\ell}
+ K^i{}_\ell K_{jk} -K^i{}_k K_{j\ell}
 \;,
 &
 \\
 & D_i K_{jk} -  D_j K_{ik}=   {\mathscr R} _{ i j k\mu}n^\mu
 \;.
 &
\eeal{secemb}
Here $n$ is the unit time-like normal to the hypersurface, and the Latin indices refer to component directions tangent to the hypersurface $\hyp$. 

Contractions of \eq{firsemb}-\eq{secemb}, along with the Einstein equation, allow one to express the Einstein constraint equations  in the following form, where we have allowed for the presence of non-gravitational fields: 
\begin{eqnarray}
\Div_g K - d (\tr_g K) & = & 8\pi J \;,\label{eq:c1}\\
R(\threeg) -2\Lambda - |K|^2_\threeg + (\tr_g K)^2 & = & 16\pi \rho \;,\label{eq:c2}\\
\mathcal C({\mathcal F}, \threeg) & = & 0 \;,\label{eq:c3}
\end{eqnarray} 
where $R(\threeg)$ is the scalar curvature of the metric
$\threeg$, $J=-{\mathcal T}(n,\cdot)$ is the momentum density of the non-gravitational
fields, $\rho={\mathcal T}(n, n)$ is the energy density, and $\mathcal C({\mathcal F},
\threeg)$ denotes  the set of additional constraints that might
come from the non-gravitational part of the theory. The first
of these equations is known as the \emph{momentum constraint}
and is a vector equation on $\hyp$. The second, a scalar
equation, is referred to as the \emph{scalar}, or
\emph{Hamiltonian, constraint}, while the last are collectively
labeled the \emph{non-gravitational constraints}.  As an example, for the Einstein-Maxwell theory in 3+1 dimensions, the non-gravitational fields consist of the
electric and magnetic vector fields $E$ and $B$. In this case
we have $\rho=\frac{1}{2}(|E|^2_\threeg +|B|^2_\threeg$),
$J=(E\times B)_\threeg$, and we have the non-gravitational constraints (for vanishing charge density) $\mathcal C (E, B, \threeg)=(\dive_{\threeg} E, \dive_{\threeg} B)=0$.  From the initial data point of view, we shall regard (\ref{eq:c1})-(\ref{eq:c2}) as the \emph{definitions} of the
quantities $J$ and $\rho$.  Equations (\ref{eq:c1})-(\ref{eq:c3}) are what we shall
henceforth call the \emph{Einstein constraint equations}, or
simply the \emph{constraint equations}. 

In order for the Einstein equation to have physical and geometric relevance, one needs to either prescribe $\mathcal T$, or at least to impose some condition on it, such as an \emph{energy condition}.  Of particular significance to us here is the {\it dominant energy condition}, which requires that
\beq
\label{DECsts} 
\mbox{${\mathcal T}_{\mu\nu}X^\mu Y^\nu\ge0$ for all
 future-directed causal vector fields $X$ and $Y$.}
\eeq
At the level of the initial data, the dominant energy condition
becomes
\beq
\label{DEC}
 \rho \ge |J|_\threeg
 \;,
\eeq
where $\rho$ and $J$ are defined in \eq{eq:c1}-\eq{eq:c2}. One checks that the condition  \eq{DECsts} holds on $(\mathcal M,\fourg)$ if and only if
\eq{DEC} holds relative to each spacelike hypersurface in $\mathcal M$.  Note that for {\it maximal} ($\tr_g K\equiv 0$) initial data, with a non-negative cosmological constant, this implies that the scalar curvature is non-negative, $R(\threeg)\geq0$. This in part accounts for why we will encounter this geometric condition so often in what follows.

Equations (\ref{eq:c1})-(\ref{eq:c2}) form an underdetermined
system of partial differential equations. In the classical
vacuum setting of $n=3$ dimensions, these are locally four
equations for the twelve unknowns given by the components of the
symmetric tensors $\threeg$ and $K$.   A particular case of interest is the \emph{time-symmetric} ($K\equiv 0$) case, for which the vacuum constraint equations become simply the requirement that metric $\threeg$ has constant scalar curvature, $R(\threeg)=2\Lambda$.  In this paper we will focus primarily on the vacuum case with a zero cosmological constant. 


\section{A Tour of Asymptotically Flat Solutions}

Although many of the results below extend to higher dimensions, for clarity we will focus in this section on the \emph{three-dimensional} case ($n=3$) of the constraint equations.

One often models isolated gravitational systems by space-times satisfying the Einstein equation which admit exterior regions where the metric approaches the Minkowski metric at some rate.  In this section we will explore initial data used to model isolated systems.   At the level of the initial data, an isolated system may be modeled by a space which approaches the Euclidean metric near infinity.  To be more precise, suppose $M$ is a three-manifold with a compact subset $C\subset M$ for which $M\setminus C=\bigcup\limits_{m =1}^k E_{m}$, where the $E_{m}$ are pairwise disjoint, and each diffeomorphic to $\mathbb R^3\setminus \{ |x|\leq 1\}$.  Then we say that $(g,K)$ is \emph{asymptotically flat} with decay rate $q$ provided each $E_{m}$ admits coordinates for which we have $|\partial_x^{\alpha}(g_{ij}-\delta_{ij})(x)|=O( |x|^{-|\alpha|-q})$ and $|\partial_x^{\beta}K_{ij}(x)|=O( |x|^{-|\beta|-1-q})$, for $|\alpha|\leq \ell+1$ and $|\beta|\leq \ell$,  where $\ell\in \mathbb Z_+$ will be chosen depending on the problem at hand.  For simplicity of presentation, we can take $q=1$, though $q>\frac{1}{2}$ generally gives sufficient decay for our purposes here, cf. \cite{ba:mass}; see also the remark on the decay rate following the Positive Mass Theorem in Section \ref{sec:pmt} below. 

A family of explicit solutions of the vacuum Einstein equations which has proved important not only for physics but also for geometry is the family of \emph{Schwarzschild} space-times.  These space-times are characterized by rotational symmetry.  There are coordinates $(t,x)$ in which the Schwarzschild space-time metric takes the form 
\[\bar g_S(x)=-\left(\frac{1-\frac{m}{2|x|}}{1+\frac{m}{2|x|}}\right)^2 dt^2 +\left(1+\frac{m}{2|x|}\right)^{4}g_E\]  where $g_E$ is the Euclidean metric.  The parameter $m$ is called the \emph{mass} of the space-time.  The space-like slice $t=0$ is asymptotically flat and conformally flat with vanishing scalar curvature, and the metric $g_S(x)=\big( 1+ \frac{m}{2|x|}\big)^4 g_E$ extends to a complete metric on the set $\mathbb R^3\setminus \{ 0\}$, with two asymptotically flat ends.  We will refer to this Riemannian metric as the Schwarzschild metric below. The two-sphere $|x|=\frac{m}{2}$ inside this slice is totally geodesic, and the three-manifold has a reflection symmetry across it.  This minimal sphere is called the \emph{horizon} of the time symmetric slice.  In the Schwarzschild black-hole space-time itself, this horizon is the central leaf of the three-dimensional null hypersurface comprising the actual \emph{event horizon}.  

Another important family of solutions of the vacuum Einstein equations is given by the \emph{Kerr} space-times, which are expressed in Kerr-Schild form as
$$
 g_{\mu\nu} = \eta_{\mu\nu} +\frac{2m{\tilde r}^3}{{\tilde r}^4+a^2 z^2} \theta_\mu \theta_\nu =\eta_{\mu \nu} + O\Big(\frac{m}{r}\Big)
 \;,
$$
where $\eta=-dt^2+dx^2+dy^2+dz^2$ is the Minkowski metric, and 
$$ \theta_\mu dx^\mu = dx^0 - \frac 1 {{\tilde r}^2+a^2}\left[ {\tilde r} (xdx+ydy) + a (xdy-ydx)\right]-\frac z {\tilde r} dz\, ,$$
 where ${\tilde r}$ is defined implicitly as the solution of
the equation
$$
 {\tilde r}^4 - {\tilde r}^2 ( x^2+y^2+z^2-a^2)-a^2 z^2 =0
 \;.
$$
This space-time models a rotating isolated system, with intrinsic angular momentum $ma$ along the $z$-axis. 

In the past few decades, there has been a tremendous amount discovered about asymptotically flat solutions to the constraint equations.  One of the most striking connections was used by Schoen in his resolution of the Yamabe problem.  Indeed, if $(M,g)$ is a closed three-manifold with positive scalar curvature $R(g)>0$, then for any $p\in M$ there is a positive Green's function $G_p$ for the conformal Laplacian: $(-\Delta_g+ \frac{1}{8} R(g)) G =\delta_p$.  Let $G=4\pi G_p$.  In an appropriate coordinate system $y$ about $p$ (conformal normal coordinates, cf. \cite{lp:yam}), we have an expansion $G(y)= |y|^{-1}+ A + O(|y|)$, where $A$ is a constant.  Now consider the blowup $(\hat M =M\setminus \{p\}, \hat g=G^4 g)$.  By the formula for the scalar curvature under a conformal change of metric, we see that $R(\hat g)=0$.  A direct calculation shows that in these coordinates, $\hat g_{ij}(y)=|y|^{-4}\left[ (1+ 4A |y|)\delta_{ij}+ O(|y|^2) \right]$, so that changing coordinates by the Kelvin transform $x=y|y|^{-2}$, we have $$\hat g_{ij}(x)=|y|^4 \hat g_{ij}(y)=\left( 1+ \frac{4A}{|x|}\right)\delta_{ij}+O(|x|^{-2}).$$ Thus we see $\hat g$ is asymptotically flat with vanishing scalar curvature. The constant $A$ is related to the energy of the Ricci-flat space-time which has $(\hat M, \hat g)$ as a totally geodesic Cauchy hypersurface, as we review below. 

In the next few subsections, we will discuss normal forms near infinity for asymptotically flat solutions of the constraints, in both the time-symmetric and general cases.  We will also survey the relation between geometry and the mass and the center of mass of asymptotically flat solutions.  

\subsection{Weighted spaces}

Suppose $(M,g)$ is an asymptotically flat three-manifold.  It has proven effective to use weighted Sobolev and H\"{o}lder spaces $W^{k,p}_{-\tau}(M,g)$ and $C^{k,\alpha}_{-\beta}(M,g)$, respectively, to capture asymptotics of functions and tensors near infinity.  Let $\sigma\geq 1$ be a smooth function which equals $|x|$ near infinity in an asymptotically flat chart, and let $\gamma$ be a multi-index.  A weighted $L^p$-norm ($p\geq 1$) is then given by $$\|u\|_{L^p_{-\tau}}^p= \int_{M} (|u|\sigma^{\tau})^p \sigma^{-3} \; dv_g.$$  More generally, weighted Sobolev spaces $W^{k,p}_{-\tau}(M,g)$ are given by $$\|u\|_{W^{k,p}_{-\tau}}= \sum\limits_{|\gamma|\leq k} \|D^{\gamma}u\|_{L^p_{-\tau-|\gamma|}}=\sum_{|\gamma|\leq k}\left(\int_M ( |D^\gamma f|\sigma^{\tau+|\gamma|})^p\sigma^{-3}dv_g\right)^{1/p}.$$  We note that the weighting convention is not universal (cf. \cite{Cantor81}, \cite{CBChristodoulou}), and that we have chosen to follow \cite{ba:mass}, which uses the re-scaled measure $\sigma^{-3} \; dv_g$ in three dimensions (and, of course, $\sigma^{-n} \; dv_g$ in $n$ dimensions). 

We now recall the basic definitions of weighted H\"{o}lder spaces on $(M,g)$.  Let $\sigma$ be as above, let $\sigma(x,y)=\min(\sigma(x),\sigma(y))$, and let $d(x,y)$ be the geodesic distance with respect to $g$.  We define the weighted H\"{o}lder seminorm on functions $f$ by 
\[ [f]_{\alpha, -\beta}= \sup\limits_{x\neq y} \sigma(x,y)^{\alpha+\beta} \frac{|f(x)-f(y)|}{(d(x,y))^{\alpha}}.\]  One can naturally extend this to tensor fields, using parallel transport.  The corresponding weighted H\"{o}lder spaces $C^{k,\alpha}_{-\beta}(M,g)$ are given by those functions $f\in C^{k,\alpha}(M)$ so that the following norm is finite: \[\|f\|_{C^{k,\alpha}_{-\beta}(M,g)}:=\sum\limits_{|\gamma|\leq k} \sup\limits_{x\in M} \left(\sigma(x)^{\beta+|\gamma|}|D^{\gamma}f(x)|\right)+[D^k f]_{\alpha, -\beta-k}.\]  

The Sobolev embedding allows one to turn integral estimates into pointwise estimates.  Indeed in dimension three, we have that if $k-\frac{3}{p}\geq \alpha>0$, then $\|u\|_{C^{0,\alpha}_{-\delta}}\leq C \|u\|_{W^{k,p}_{-\delta}}$, and in fact $|u(x)|=o(|x|^{-\delta})$ as $|x|\rightarrow \infty$, cf. \cite{ba:mass}.  (The same holds in $n$ dimensions in case $k-\frac{n}{p}\geq \alpha>0$.) For large $k$, then, one can get supremum estimates on derivatives of $u$ as well. 

We note that asymptotically flat metrics may also be defined in terms of these spaces, by requiring coordinates in an end for which the components $(g_{ij}-\delta_{ij})$ belong to a weighted space, with respect to a Euclidean metric on the end.  

One reason it is nice to work with the weighted spaces is that the Laplace operator $\Delta_g$ is Fredholm (except for a discrete set of weights), and in an appropriate weight range, it is an isomorphism.  We have the following weighted elliptic estimates and regularity for the Laplace operator (for simplicity, in three dimensions) \cite{ba:mass, sw:qc}. 

\begin{Proposition} Let $\tau, \eta>0$, $\alpha\in (0,1)$, $q>n$, $1<p\leq q$, $\delta\in \mathbb R \setminus \mathbb Z$, and $k\geq 2$.  Suppose $(M,g)$ is asymptotically flat in the sense that in appropriate coordinates in any end, $g$ is uniformly equivalent to $g_E$, and $(g_{ij}-\delta_{ij})\in W^{k-1,q}_{-\tau}$.  Then $\Delta_g:W^{k,p}_{-\delta}\rightarrow W^{k-2, p}_{-\delta-2}$ is Fredholm; if $\delta\in (0,1)$, this map is an isomorphism, and there is a $C>0$ so that $\|w\|_{W^{k,p}_{-\delta}}\leq C \|\Delta_g w\|_{W^{k-2,p}_{-\delta-2}}$.  

Suppose $(g_{ij}-\delta_{ij}) \in C^{k-1,\alpha}_{-\tau}$.  Then for $\delta\in (0,1)$, $\Delta_g:C^{k,\alpha}_{-\delta}\rightarrow C^{k-2, \alpha}_{-\delta-2}$ is an isomorphism, and there is a $C>0$ so that $\|w\|_{C^{k,\alpha}_{-\delta}}\leq C \|\Delta_g w\|_{C^{k-2,\alpha}_{-\delta-2}}$.  Moreover, if $w\in C^0_{-\eta}$ and $\Delta_gw \in C^{k-2,\alpha}_{-3} \cap L^1$, then $w\in C^{k,\alpha}_{-1}$, and there is a $C>0$ so that $\|w\|_{C^{k,\alpha}_{-1}}\leq C (\|\Delta_g w\|_{C^{k-2,\alpha}_{-3}}+\|\Delta_g w\|_{L^1}).$
\end{Proposition}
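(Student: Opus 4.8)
The plan is to reduce the whole statement to the behavior of the flat Laplacian $\Delta$ on $\R^n$ between weighted spaces (equivalently on an exterior region $\{|x|>1\}$), and to treat $\Delta_g$ as a perturbation of $\Delta$ that is lower order, decaying, and small near infinity, glued to standard interior elliptic theory on a compact core of $M$. From the classical theory of weighted spaces used in \cite{ba:mass, sw:qc} one takes as given that on $\R^n$ the map $\Delta\colon W^{k,p}_{-\delta}\to W^{k-2,p}_{-\delta-2}$ is Fredholm exactly when $\delta$ avoids the indicial roots of $\Delta$ --- the integers, arising from the homogeneous harmonic polynomials and the singular harmonics $|x|^{2-n-j}Y_j$ --- and that it is an isomorphism precisely for $\delta$ in the nonexceptional interval $(0,n-2)$, which is $(0,1)$ when $n=3$; on that interval, expanding in spherical harmonics shows that both the kernel (decaying harmonics) and, by duality, the cokernel are trivial. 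Just above that range ($\delta\in(n-2,n-1)$, i.e.\ $(1,2)$ when $n=3$) the map remains injective but acquires a one-dimensional cokernel whose single obstruction is the vanishing of the total integral of the source; this too will be used.

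To pass from $\Delta$ to $\Delta_g$, first note that since $q>n$ and $k\ge 2$ the hypothesis $(g_{ij}-\delta_{ij})\in W^{k-1,q}_{-\tau}$ (resp.\ $C^{k-1,\alpha}_{-\tau}$) makes $g$ uniformly Euclidean in the ends and, via the multiplication properties of weighted spaces, makes $\Delta_g$ a bounded operator between the stated spaces; in each asymptotic chart $\Delta_g-\Delta$ is a second-order operator whose coefficients are built from $g_{ij}-\delta_{ij}$ and its first derivatives, and hence decay like $\sigma^{-\tau}$. The analytic heart is the weighted a priori estimate, proved by a dyadic rescaling argument: cover $M$ by a compact core together with annuli $A_j\simeq\{2^j\le|x|\le 2^{j+1}\}$ in each end, rescale each $A_j$ to unit size (where the metric is uniformly controlled, uniformly in $j$), apply the standard interior Calder\'on--Zygmund estimate (Schauder estimate in the H\"older case) with a $j$-independent constant, and reassemble while tracking the weight $\sigma^{\tau+|\gamma|}$ on each derivative; patching with a partition of unity subordinate to the core and the ends yields $\|w\|_{W^{k,p}_{-\delta}}\le C\big(\|\Delta_g w\|_{W^{k-2,p}_{-\delta-2}}+\|w\|_{L^p(K)}\big)$ for a fixed compact $K$. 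Together with the model result --- applied through a parametrix, so that $\Delta_g$ differs from an invertible operator by a sum of a compact operator (on the core, by Rellich) and an operator of small norm (in the ends, since the coefficients of $\Delta_g-\Delta$ are small there) --- this gives the Fredholm statement for all $\delta\notin\mathbb Z$. For $\delta\in(0,1)$ one upgrades to an isomorphism: $\Delta_g$ has no zeroth-order term, so the strong maximum principle applies, and any $w\in W^{k,p}_{-\delta}$ (hence $w\to 0$ in each end) with $\Delta_g w=0$ must vanish --- the kernel is trivial; and the cokernel, identified by the duality $\int uv\,dv_g$ with the kernel of $\Delta_g$ at the reflected weight --- which for $n=3$ and $\delta\in(0,1)$ is $1-\delta\in(0,1)$ --- again consists of functions tending to zero, hence is trivial. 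Boundedness of the inverse (open mapping theorem) is the claimed estimate, and the H\"older version is identical with Schauder in place of Calder\'on--Zygmund.

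For the final regularity claim, interior Schauder regularity first promotes $w$ to $C^{k,\alpha}_{\mathrm{loc}}$, so only the decay rate of $w$ and its derivatives is at issue; the subtlety is that the target weight $\delta=1$ is exactly exceptional, so the isomorphism just established does not apply, and the $L^1$ hypothesis must be used to pin down the $\sigma^{-1}$ leading behavior. One builds a particular solution: in each end extend $f|_{E_m}$ to $\tilde f_m$ on $\R^3$ preserving the $C^{k-2,\alpha}_{-3}\cap L^1$ bounds, let $\Phi_m=-\tfrac1{4\pi}\int_{\R^3}|x-y|^{-1}\tilde f_m(y)\,dy$ be its Newtonian potential, so that $\Delta\Phi_m=\tilde f_m$ and, because $\tilde f_m\in L^1$, $\Phi_m=-\tfrac1{4\pi|x|}\int\tilde f_m+(\text{faster-decaying})$ and $\Phi_m\in C^{k,\alpha}_{-1}(\R^3)$ with $\|\Phi_m\|_{C^{k,\alpha}_{-1}}\le C\big(\|\tilde f_m\|_{C^{k-2,\alpha}_{-3}}+\|\tilde f_m\|_{L^1}\big)$. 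Set $u_1=\sum_m\chi_m\Phi_m$ with cutoffs $\chi_m$ supported near infinity in $E_m$; then $f_2:=f-\Delta_g u_1$ is supported in a compact set together with an $O(\sigma^{-3-\tau})$ tail in each end, and by adjusting the interior parts of the extensions $\tilde f_m$ (one linear constraint, several ends) one can arrange $\int_M f_2\,dv_g=0$, so that $f_2$ lies in the range of $\Delta_g$ on $C^{k,\alpha}_{-\delta}$ for some $\delta\in(1,2)$; solving gives $u_2$ decaying faster than $\sigma^{-1}$, hence $u:=u_1+u_2\in C^{k,\alpha}_{-1}$ solves $\Delta_g u=f$ with $\|u\|_{C^{k,\alpha}_{-1}}\le C\big(\|f\|_{C^{k-2,\alpha}_{-3}}+\|f\|_{L^1}\big)$. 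Finally $w-u$ is $g$-harmonic and tends to zero in every end, so $w=u$ by the maximum principle, which is the assertion.

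I expect the main obstacle to lie not in any single step but in the bookkeeping that joins them: carrying out the dyadic-rescaling proof of the weighted estimates with constants genuinely uniform across the scales and across the perturbation of the metric, and, in the last part, the borderline construction --- confirming that it is precisely the $L^1$ norm that controls the $\sigma^{-1}$ coefficient, and that the remainder $f_2$ can indeed be made to satisfy the single integral identity needed to solve it away at a strictly faster rate.
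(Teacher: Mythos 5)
The paper does not prove this proposition: it is stated with citations to \cite{ba:mass} and \cite{sw:qc}, and no argument is given. Your proposal is essentially the standard proof from those sources, and the structure is sound: the flat model via indicial roots/spherical harmonics, the dyadic rescaling to obtain the weighted a priori estimate $\|w\|_{W^{k,p}_{-\delta}}\le C(\|\Delta_g w\|_{W^{k-2,p}_{-\delta-2}}+\|w\|_{L^p(K)})$, the compact-plus-small decomposition of $\Delta_g-\Delta$ for Fredholmness, the maximum principle for injectivity and the duality/index argument for surjectivity at nonexceptional weights, and the Newtonian-potential construction combined with solving one step below the exceptional weight $\delta=1$ after using the freedom in the extensions to kill the single integral obstruction.

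Two places are stated rather than justified and deserve the most care if you write this out in full. First, the assertion $\|\Phi_m\|_{C^{k,\alpha}_{-1}(\R^3)}\le C\big(\|\tilde f_m\|_{C^{k-2,\alpha}_{-3}}+\|\tilde f_m\|_{L^1}\big)$ is a nontrivial borderline estimate: $f=O(|x|^{-3})$ alone does not give $f\in L^1$ nor, by itself, the clean $|x|^{-1}$ leading term, and extracting the weighted H\"older control on the top derivatives of $\Phi_m$ from the combined $L^1$ and $C^{k-2,\alpha}_{-3}$ hypotheses requires splitting the convolution kernel into near and far regions at scale $|x|$ and tracking both contributions; this is precisely where the $L^1$ norm enters the final estimate, as you anticipated. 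Second, the passage from the a priori estimate to the Fredholm statement for \emph{all} $\delta\notin\mathbb Z$ needs the semi-Fredholm argument on both sides (either by applying the same scaling estimate to the formal adjoint, or by building a genuine parametrix modulo compacts); your phrase about a compact-plus-small decomposition is the right idea but should be made explicit, since for $\delta$ outside $(0,1)$ the kernel or cokernel is nontrivial and the argument cannot rely on injectivity.
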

We remark that the weights that are avoided in the above result are precisely those that correspond to rates of growth of Euclidean harmonic functions on $\mathbb R^3\setminus \{ 0\}$ (so that for dimension $n>3$, the \emph{exceptional} weights are given by $(-\delta)\in \{ m\in \mathbb Z: m \leq 2-n, \; \text{or } m \geq 0\}$).  

\subsection{Harmonically flat asymptotics}  Consider the time-symmetric case $K=0$ of the vacuum constraints, which reduces to the vanishing of the scalar curvature.  A particularly simple form for the geometry on an asymptotic end is for $g$ to be conformally flat with vanishing scalar curvature, which Bray has named \emph{harmonically flat}.  Suppose that $g=u^4 g_E$, in suitable coordinates $g_{ij}(x)=u^4(x)\delta_{ij}$, then the scalar curvature $R(g)=-8u^{-5}\Delta u$ is non-negative if any only if $\Delta u\leq 0$, and the scalar curvature vanishes if and only if $u$ is harmonic.  Any harmonic function $u$ which tends to 1 as $|x|\rightarrow \infty$ admits, by way of composing $u-1$ with the Kelvin transform $x\mapsto x |x|^{-2}$ (inversion in the unit sphere), an expansion in spherical harmonics, which we write 
\begin{equation}
u(x)= 1 + \frac{m}{2|x|} + \frac{\beta^1 x^1 +\beta^2x^2+\beta^3x^3}{|x|^3}+ \cdots. \label{cfexp}
\end{equation} 
The number $m$ in this expansion is called the \emph{ADM mass} \cite{adm, ba:mass}, and a simple calculation shows $$m=\frac{1}{16\pi}\lim\limits_{R\rightarrow +\infty} \int\limits_{\{|x|=R\}}
\sum\limits_{i}\left( g_{ij,i}-g_{ii,j}\right) \nu_e^j d\mu_e.$$
This limit exists and defines the ADM mass $m(g)$ for the more general asymptotically flat metrics we defined above when $R(g)\in L^1(M)$, for instance in case $(g,K)$ solves the vacuum constraints.  The mass plays an important role in the geometry of positive scalar curvature, as well as in the physics of isolated gravitational systems. 

Schoen and Yau \cite{sy:elm} showed that metrics which are harmonically flat on the ends are dense in the space of asymptotically flat metrics with non-negative scalar curvature.  

\begin{Proposition} Suppose $(M,g)$ is asymptotically flat with $R(g)\geq 0$.  For any $\epsilon>0$, there is a metric $\bar g$ within $\epsilon$ of $g$ (in a weighted norm), with $R(\bar g)\geq 0$, and which is harmonically flat near infinity in each end $E$, with $|m(g)-m(\bar g)|<\epsilon$. There is also a metric $\tilde g$ with $R(\tilde g)=0$ which is harmonically flat at each end, with $m(\tilde g)\leq m(g)+\epsilon$.  \label{prop:ha}
\end{Proposition}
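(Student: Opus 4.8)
The plan is to follow the approach of Schoen and Yau: produce $\bar g$ by replacing $g$ near infinity with an explicit harmonically flat model of the same mass and then correcting the small error by a conformal deformation, and produce $\tilde g$ by deforming conformally all the way to zero scalar curvature, where the one-sided mass bound will come from the maximum principle. The two tools are the conformal change formula $R(u^4 g)=u^{-5}(-8\Delta_g u+R(g)u)$ and the weighted elliptic theory for $\Delta_g$ recorded in the preceding Proposition.

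For $\bar g$: in each end, fix a large $\mu$ and let $g_\mu$ interpolate between $g$ on $\{|x|\le\mu\}$ and the Schwarzschild model $(1+\frac{m_E}{2|x|})^4 g_E$ on $\{|x|\ge 2\mu\}$, where $m_E$ is the mass of $g$ in that end. Then $g_\mu$ is harmonically flat at each end, its mass in each end is still $m_E$, and $R(g_\mu)$ equals $R(g)$ on $\{|x|\le\mu\}$, vanishes on $\{|x|\ge 2\mu\}$, and on each annulus $A_\mu=\{\mu\le|x|\le2\mu\}$ is an interpolation error of size $O(\mu^{-2-q})$. I would then solve the Lichnerowicz-type equation $-8\Delta_{g_\mu}u+R(g_\mu)u=Su^5$, $u\to1$, where $S\ge0$ is a cutoff of $R(g)$ at scale $\mu/2$; writing $u=1+w$, this becomes $(-8\Delta_{g_\mu}+R(g_\mu))w=S[(1+w)^5-1]+(S-R(g_\mu))$, in which $S-R(g_\mu)$ is supported where $|x|\sim\mu$ with $C^{k-2,\alpha}_{-\delta-2}$-norm $O(\mu^{\delta-q})$ for $\delta<q$, while the nonlinear term carries a factor of $w$. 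Since $R(g_\mu)$ is nonnegative away from the small annular piece, $-8\Delta_{g_\mu}+R(g_\mu)$ is a small perturbation of the conformal Laplacian of $g$ and hence an isomorphism $C^{k,\alpha}_{-\delta}\to C^{k-2,\alpha}_{-\delta-2}$, $\delta\in(0,1)$, by the preceding Proposition and the maximum principle. A contraction mapping then produces a unique small $w$, so $\bar g:=u^4 g_\mu$ has $R(\bar g)=S\ge0$, is harmonically flat at each end, and lies within $\epsilon$ of $g$ in a weighted norm (of weight slightly below the critical one, since the modification near infinity is only of order $|x|^{-q}$).

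The mass is controlled as follows. In each end $m(\bar g)=m_E+2a_E$, where $a_E$ is the $|x|^{-1}$-coefficient of $u-1$; by the regularity estimate in the preceding Proposition, $a_E$ is governed by the norm of $\Delta_{g_\mu}w=\frac18(R(g_\mu)u-Su^5)$ in $C^{k-2,\alpha}_{-3}\cap L^1$. The pieces of this supported away from the annuli converge to $\pm\frac18 R(g)$ and cancel (using $R(g)\in L^1$, which we recall is assumed for the mass to be defined, and $u\to1$), while $\int_{A_\mu}R(g_\mu)\to0$: with $h=g_\mu-g_E$ one has the divergence identity $\int_{A_\mu}DR_{g_E}(h)=\int_{\partial A_\mu}\sum_{ij}(h_{ij,i}-h_{ii,j})\nu_e^j\,d\mu_e$, whose right side is $16\pi$ times the difference of the ADM surface integrals over the two bounding spheres of $A_\mu$, both tending to $16\pi m_E$, whereas the quadratic remainder $\int_{A_\mu}(R(g_\mu)-DR_{g_E}(h))$ is $o(1)$ because $q>\frac12$. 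Hence $a_E\to0$, and $|m(\bar g)-m(g)|<\epsilon$ for $\mu$ large.

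For $\tilde g$ I would instead first deform $g$ conformally all the way to a scalar-flat metric $g_0=v^4 g$, solving $-8\Delta_g v+R(g)v=0$ with $v\to1$; then $v>0$, and since $R(g)\ge0$ the maximum principle forces $v\le1$, so the $|x|^{-1}$-coefficient of $v$ in each end is $\le0$ and $m(g_0)\le m(g)$. Running the construction above on $g_0$---whose scalar curvature vanishes identically, so that after interpolation the scalar-curvature defect lives only on the far annuli and the correcting conformal factor is genuinely small---yields $\tilde g$ with $R(\tilde g)=0$, harmonically flat at each end, and $m(\tilde g)=m(g_0)+o(1)\le m(g)+\epsilon$; one only asserts a one-sided bound because $m(g_0)$ can be strictly smaller than $m(g)$ when $R(g)\not\equiv0$. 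The step I expect to be the main obstacle is this mass bookkeeping: the annulus replacement changes $g$ by an amount that is not small in the sharp weighted norm, and $|R(g_\mu)|$ times the volume of $A_\mu$ is only $O(1)$, so a crude estimate of the mass change fails; the cancellation in the divergence identity---both boundary spheres of the annulus carry the same mass to leading order---together with the integrability $R(g)\in L^1$ is exactly what makes the estimate close up.
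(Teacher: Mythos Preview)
Your argument is correct, and for the second claim ($\tilde g$) it coincides with the paper's: first solve $-8\Delta_g v+R(g)v=0$, $v\to 1$, use $R(g)\ge 0$ and the maximum principle to get $0<v\le 1$, hence $m(v^4g)\le m(g)$, and then apply the first construction to the resulting scalar-flat metric.

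For $\bar g$ your route differs from the paper's in one structural choice.  The paper interpolates $g$ directly to the \emph{Euclidean} metric at scale $\theta$ (so the background $g_\theta$ has \emph{zero} mass outside), then solves $R(u^4 g_\theta)=\psi_\theta R(g)$ by the inverse function theorem; the entire mass of $\bar g$ is carried by the $|x|^{-1}$-coefficient of $u$, and the paper simply asserts ``the masses will be close'' with a citation.  You instead interpolate to a \emph{Schwarzschild} model already carrying the correct mass $m_E$, so the conformal correction contributes only the small perturbation $2a_E$ to the mass; you then pay for this convenience with the annulus bookkeeping (the divergence identity showing that both boundary spheres of $A_\mu$ carry ADM integral $\to 16\pi m_E$, and the quadratic remainder is $O(\mu^{1-2q})\to 0$ for $q>\tfrac12$).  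Both routes are valid.  The paper's is shorter to set up but leaves the mass continuity to the references; yours makes the mass control self-contained and transparent, at the cost of a slightly more elaborate background and the annulus cancellation argument.  Your parenthetical about closeness only in a weight $-\delta$ with $\delta<q$ is exactly right, and applies equally to the paper's version.
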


\begin{proof}[Sketch of proof]
Let $0\leq \psi\leq 1$ be a smooth cutoff function so that $\psi(t)=1$ for $t<1$ and $\psi(t)=0$ for $t>2$.  Fix $r_0$, and on each end choose asymptotically flat coordinates defined for $|x|>r_0$.  For $\theta>r_0$, let $\psi_{\theta}(x)=\psi(|x|\theta^{-1})$; $\psi_{\theta}$ extends smoothly from the ends to all of $M$.   Now consider the metric $g_{\theta}(x)= \psi_{\theta}(x)g(x)+(1-\psi_{\theta}(x)) g_E(x)$.  This metric is identical to the Euclidean metric for $|x|>2\theta$, but the scalar curvature may fail to be non-negative for $\theta< |x|<2\theta$.  We now use a conformal deformation to change back to non-negative scalar curvature. Indeed, we have $R(u^4 g_{\theta})=-u^{-5}(8\Delta_{g_{\theta}} u -R(g_{\theta})u)$.  We want to impose that $R(u^4 g_{\theta})=\psi_{\theta} R(g)$; $\psi_{\theta} R(g)$ is compactly supported, and $|\psi_{\theta} R(g)|\leq C\theta^{-2-q}=C\theta^{-3}$ (in case $q=1$).  Let $u=1+v$ and set $\varXi(v)=-u^{-5}(8\Delta_{g_{\theta}} u -R(g_{\theta})u)$.  Then $\varXi$ is smooth in $v$, for $v$ in $C^{2, \alpha}_{-\delta}(M)$, or $W^{2,p}_{-\delta}(M)$ ($p>3/2$, $\delta \in(\frac{1}{2}, 1)$).  If we let $D\varXi$ be the linearization about $v=0$ ($u=1$), we get $D\varXi(w)= -8\Delta_{g_{\theta}}w -4R(g_{\theta})w$. Now, $R(g_{\theta})$ is compactly supported and (for sufficiently large $\theta$) of \emph{small} norm, so that $D\varXi$ is a small perturbation of an invertible operator (the Laplacian), and thus it is an isomorphism.   Moreover, the inverse norm is bounded uniformly in $\theta$; therefore by the inverse function theorem, for $\theta$ large enough, we can solve $\varXi(v)=\psi_{\theta} R(g)\geq 0$.  The solution $v$ will be close to zero, so that $u>0$ and $\bar g = u^4 g$ will be close to $g$, as desired, and the masses will be close, cf. \cite{sy:elm, sc:var}. 

Suppose instead we wish to deform to zero scalar curvature.  For this we study the \emph{linear} operator $\big(\Delta_g -\frac{1}{8}R(g)\big)$.  We now recall the Schoen-Yau argument that this operator is invertible on weighted spaces, say $W^{2,p}_{-\delta}$ as above, for $g$ asymptotically flat with $R(g)\geq 0$, in fact more generally in case $R(g)$ has suitably small negative part $R(g)^-$.  The operator $\big(\Delta_{g} - \frac{1}{8}R(g)\big)$ is Fredholm of index zero, since it is a compact perturbation of the Laplacian.  For any $w$ in the kernel, we have by integration by parts (using the decay of $w$) and the H\"{o}lder inequality, $\|\nabla w\|^2_{L^2(dv_{g})}\leq c \|R(g)^-\|_{L^{3/2}(dv_{g})}\|w\|_{L^{2^*}(dv_{g})}^2$, where $2^*=6$ is the Sobolev conjugate exponent to 2, in dimension three.  For  $R(g)^-$ small in $L^{3/2}$, we see that $w$ must be zero by the Sobolev inequality: $\|w\|_{L^{2^*}(dv_{g})}\leq C \|\nabla w\|_{L^2(dv_{g})}$, and therefore $\Delta_g -\frac{1}{8}R(g):W^{2,p}_{-\delta}\rightarrow W^{0,p}_{-2-\delta}$ is an isomorphism.

Now we consider the second claim in the proposition, where we wish to push the scalar curvature to zero and arrange harmonic asymptotics.  We may employ the result of the preceding paragraph to arrange zero scalar curvature.   Indeed, we let $v$ be the solution of $\Delta_g v- \frac{1}{8}R(g) v = \frac{1}{8}R(g)$, which is equivalent to $\Delta_g u-\frac{1}{8}R(g) u=0$, with $u=1+v$ tending to 1 at infinity.  Note that the constant functions $u_-=0$ and $u_+=1$ are sub- and super-solutions, respectively.  By the maximum principle and Harnack inequality, cf. \cite{gt:pde}, $0<u<1$.  Since $u$ also admits an expansion in any end given by $u(x)=1+\frac{A}{|x|}+O(|x|^{-2})$, cf. \cite{ba:mass}, we see $A<0$.  Thus the metric $u^4 g$ is asymptotically flat with vanishing scalar curvature, and moreover, $m(u^4 g)=m(g)+2A \leq m(g)$.  

Thus to prove the second claim and arrange harmonic asymptotics with the given mass bound, we may assume without loss of generality that $R(g)=0$.  The argument in the first paragraph of the proof can now be applied to find $u>0$ so that $R(u^4 g_\theta)=\psi_{\theta} R(g)=0$, for large enough $\theta$.  The result follows. \end{proof}

Bray \cite{br:th} observed that the Schoen-Yau approximation could be modified to produce ends that are \emph{precisely} Schwarzschild, preserving non-negative scalar curvature.  We now recall this argument.  Let $\phi:\mathbb R \rightarrow \mathbb R$ be a smooth, non-negative function with support $\mbox{spt} (\phi)=[-1,1]$, which is constant near the origin.  Let $\varphi(x)=\phi(|x|)$ be the associated smooth, rotationally symmetric bump function supported in the unit ball, and by scaling we may assume $\int\limits_{\mathbb R^3} \varphi(x)\; dv_e = 1$.  Let the family $\varphi_{\epsilon}(x)= \epsilon^{-3} \varphi(\frac{x}{\epsilon})$ for $\epsilon\downarrow 0$ be the corresponding approximate identity; note that $\varphi_{\epsilon}$ has unit integral, and is supported on $\{|x|\leq \epsilon\}$.  We can use $\varphi_{\epsilon}$ to mollify functions by convolution: $(\varphi_{\epsilon}\ast w)(x)=\int\limits_{y\in \mathbb R^3} \varphi_{\epsilon}(y) w(x-y)\; dv_e$.

We now state and prove Bray's proposition.  The closeness of the metrics $g$ and $\tilde g$ can be measured in a norm, or, as Bray states it, as an $\epsilon$-\emph{quasi-isometry}, i.e. for all nonzero $v\in TM$, $\frac{g(v,v)}{\tilde g (v,v)} \in (e^{-\epsilon}, e^{\epsilon})$.   

\begin{Proposition}  Suppose $(E,g)$ is an asymptotically flat end with $R(g)\geq 0$.  For any $\epsilon>0$, there is a metric $\tilde g$ with $R(\tilde g)\geq 0$, which is $\epsilon$-close to $g$, which is isometric to a (Riemannian) Schwarzschild metric near infinity in $E$, and for which $|m(g)-m(\tilde g)|<\epsilon$. \label{br:sa}
\end{Proposition}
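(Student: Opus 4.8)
The plan is to imitate the Schoen--Yau argument behind Proposition~\ref{prop:ha}, but to interpolate with a Schwarzschild metric in place of the flat metric, and then to repair the scalar curvature by a conformal deformation at the cost of an arbitrarily small change in the mass.

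By Proposition~\ref{prop:ha} I would first reduce to the harmonically flat case: after an $\epsilon$-small modification keeping the mass within $\epsilon/2$ of $m(g)$, assume $R(g)=0$ and $g=u^4g_E$ for $|x|$ large, with $u>0$ harmonic, $u\to 1$, and $u(x)=1+\tfrac{m}{2|x|}+\varpi(x)$, where $m=m(g)$ and $\varpi$ is a sum of higher spherical harmonics, so $\varpi(x)=O(|x|^{-2})$. Fix a small $\gamma\in(0,\epsilon/2)$, set $\tilde m=m+\gamma$, and, using the cutoff $\psi_\theta$ from the proof of Proposition~\ref{prop:ha} with $\theta$ large, form the conformally flat metric $g_\theta=\Phi_\theta^4 g_E$ with
\[
\Phi_\theta=\Big(\psi_\theta\,u^4+(1-\psi_\theta)\big(1+\tfrac{\tilde m}{2|x|}\big)^4\Big)^{1/4},
\]
so that $g_\theta=g$ for $|x|\le\theta$ and $g_\theta$ equals the (isotropic) Schwarzschild metric of mass $\tilde m$ for $|x|\ge2\theta$. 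Since $u$ and $1+\tfrac{\tilde m}{2|x|}$ are harmonic off the origin, $\Delta\Phi_\theta$ is supported in the annulus $A_\theta=\{\theta\le|x|\le2\theta\}$; a short computation gives the leading term $\Delta\Phi_\theta\sim-\tfrac{\gamma}{2}\,\psi''(|x|/\theta)\,\theta^{-2}|x|^{-1}$, whence $\|\Delta\Phi_\theta\|_{L^1(A_\theta)}=O(\gamma)$, and, for a suitable choice of $\psi$ (convex near the outer edge of its transition), $\Delta\Phi_\theta\le 0$ near $\{|x|=2\theta\}$; moreover the divergence theorem on $\{|x|=\theta\}$ and $\{|x|=2\theta\}$ gives the identity
\[
\int_{A_\theta}\Delta\Phi_\theta\,dv_E = 2\pi\big(m-\tilde m\big)=-2\pi\gamma<0 .
\]

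The heart of the matter is to correct $\Phi_\theta$ to a superharmonic conformal factor without disturbing the two ``ends'' of the construction. I would look for $\tilde\Phi=\Phi_\theta+w$ with $w$ small, smooth, and vanishing to high order at both $\{|x|=\theta\}$ and $\{|x|=2\theta\}$---so that $\tilde g:=\tilde\Phi^4 g_E$ still equals $g$ for $|x|\le\theta$ and equals the Schwarzschild metric of mass $\tilde m$ for $|x|\ge 2\theta$---and with $\Delta\tilde\Phi\le 0$, hence $R(\tilde g)=-8\tilde\Phi^{-5}\Delta\tilde\Phi\ge 0$. Concretely one prescribes $R(\tilde g)$ to be a smooth, non-negative, compactly supported ``source,'' produced by mollifying with $\varphi_\epsilon$, and solves for $w$ by the implicit-function-theorem argument used in the proof of Proposition~\ref{prop:ha}: the relevant operator is a small perturbation of $\Delta$, and $w$ is essentially (a cutoff of) the Newtonian potential of that source. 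The reason the passage to $\tilde m>m$ is essential is that it supplies the room needed for solvability: a corrector $w$ vanishing at both spheres has $\int_{A_\theta}\Delta w\,dv_E=0$, so the requirement $\int_{A_\theta}\Delta\tilde\Phi\,dv_E\le 0$ amounts to the identity above, $\int_{A_\theta}\Delta\Phi_\theta\,dv_E=-2\pi\gamma\le 0$; with $\tilde m=m$ the corrected factor would be forced to be globally harmonic, contradicting unique continuation unless $g$ were already Schwarzschild. Since $\|(\Delta\Phi_\theta)^+\|_{L^1(A_\theta)}=O(\gamma)$, one can take $\sup|w|=O(\gamma\theta^{-1})$.

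Finally, smallness of $w$ makes $\tilde g$ $\epsilon$-quasi-isometric to $g_\theta$, and $g_\theta$ is $\epsilon$-quasi-isometric to $g$ once $\theta$ is large (both deviate from $g_E$ by $O(\theta^{-1})$ on $A_\theta$); by construction $R(\tilde g)\ge 0$, $\tilde g$ coincides with the Schwarzschild metric of mass $\tilde m$ for $|x|\ge 2\theta$, and $|m(g)-m(\tilde g)|\le\tfrac{\epsilon}{2}+\gamma<\epsilon$. I expect the correction step to be the main obstacle: one cannot interpolate between two distinct harmonic conformal factors and remain superharmonic, so there is no way to preserve both an exactly Schwarzschild end and the sign of the scalar curvature for free---it is precisely the willingness to raise the mass by an arbitrarily small amount, together with the mollified conformal repair, that closes the gap.
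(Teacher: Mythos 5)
Your proposal takes a genuinely different route from the paper's (Bray's) argument. Bray's proof avoids solving any PDE at all: after reducing to a harmonically flat end $g=u^4g_E$, he picks an explicit harmonic Schwarzschild factor $v=C_1+C_2/|x|$ with $v>u$ on $\{|x|=R\}$ and $v<u$ on $\{|x|=2R\}$, sets $w=\min(u,v)$ (which is \emph{weakly superharmonic}, being a minimum of harmonic functions), and then mollifies: $\tilde w=\varphi_\delta*w$ is smooth, still superharmonic since $\Delta(\varphi_\delta*w)=\varphi_\delta*\Delta w\le 0$, and by the mean value property it coincides with $u$ where only $u$ is active and with $v$ where only $v$ is active. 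Thus $\tilde w^4 g_E$ has $R\ge 0$, equals $g$ near $\{|x|=R\}$, and is exactly Schwarzschild of mass $2C_1C_2\approx m(g)$ near infinity. No interpolation of conformal factors and no solvability question ever arises.

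Your version instead builds a smooth interpolation $\Phi_\theta$ (which is neither harmonic nor superharmonic in the transition annulus) and hopes to correct it by a small $w$ \emph{compactly supported in} $\overline{A_\theta}$ with $\Delta(\Phi_\theta+w)\le 0$. This is where there is a genuine gap, and I do not think it is merely a technicality. The implicit-function-theorem argument you cite from the proof of Proposition~\ref{prop:ha} solves $\Delta_{g_\theta} v-\tfrac{1}{8}R(g_\theta)v=\tfrac{1}{8}R(g_\theta)$ with $v$ in a \emph{weighted space decaying at infinity}; it does not and cannot produce a correction supported in $A_\theta$. For a compactly supported $w$, integration by parts against any harmonic function $p$ on a neighborhood of $\overline{A_\theta}$ gives $\int_{A_\theta}(\Delta w)\,p\,dv_e=0$, so the requirement $\Delta(\Phi_\theta+w)\le 0$ forces $\int_{A_\theta}(\Delta\Phi_\theta)\,p\,dv_e\le 0$ for \emph{every nonnegative} harmonic $p$ near $\overline{A_\theta}$ --- infinitely many constraints, not just the single one $\int_{A_\theta}\Delta\Phi_\theta\,dv_e\le 0$ (equivalently $\tilde m\ge m$) that you identify. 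These extra constraints involve the higher multipole moments of $u$ and are not obviously satisfiable; and even if they were, turning them into an actual compactly supported $w$ would require a Corvino--Schoen-type analysis, which is delicate here because the Laplacian (unlike $L_g^*$) has an infinite-dimensional cokernel relative to compactly supported sources in an annulus. Your own closing sentence correctly anticipates that the correction is the obstacle; the idea that resolves it in the paper is precisely the one your argument doesn't use --- that $\min$ of harmonic functions is superharmonic, and mollification preserves both superharmonicity and, off the crossing set, the function itself.
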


\begin{proof} By applying Proposition \ref{prop:ha}, we may modify the metric and choose coordinates on $E$ and an $r_0>0$, so that for $|x|>r_0$, the metric has the form $g_{ij}(x)= u(x)^4 \delta_{ij}$, with $\Delta u=0$, and $u(x)= 1+ \frac{m(g)}{2|x|}+ O(|x|^{-2})$.  Now for any $R>r_0$ and $\delta >0$, consider the harmonic function $v(x)= C_1 + \frac{C_2}{|x|}$, with $C_1$ and $C_2$ chosen so that $C_1+\frac{C_2}{R}= \max\limits_{|x|=R} u(x)   + \delta$, and $C_1+\frac{C_2}{2R}= \min\limits_{|x|=2R} u(x) - \delta$.  By the choice of $C_1$ and $C_2$, we have that the function $w$ defined by $$w(x)= \begin{cases} u(x) & |x|<R \\ \min (u(x), v(x))\quad  & R\leq  |x|\leq 2R \\ v(x) & |x|>2R\end{cases} $$ is continuous.  Furthermore, note that by the expansion of $u$, we have that for any $\eta>0$, if we take $R$ sufficiently large and $\delta=\frac{\alpha}{R}$ with $\alpha$ sufficiently small, then $|C_1-1|<\eta$ and $|C_2-\frac{m(g)}{2}|<\eta$.  Since $u$ and $v$ are harmonic, and the minimum of harmonic functions is (weakly) superharmonic, we have that $w$ is superharmonic.  If we convolve $w$ with a spherically symmetric mollifier $\varphi_{\delta}$ supported in $\{ y: |y|\leq \delta\}$ as above, we produce a smooth, superharmonic function $\tilde w=\varphi_{\delta} \ast w$ which satisfies, by the mean value property,  $\tilde w (x)= u(x)$ for $|x|<R-\delta$, and $\tilde w (x)= v(x)$ for $|x|> 2R+\delta$.   Thus if we let $\tilde g_{ij}=\tilde w^4 \delta_{ij}$ on $E$, then $\tilde g$ agrees with $g$ for $r_0<|x|<R-\delta$, and $\tilde g$ is precisely Schwarzschild on $|x|>2R+\delta$, with mass $m(\tilde g)= 2 C_1C_2 \approx m(g)$. \end{proof}

\subsection {Positive mass and topological obstructions to positive scalar curvature} \label{sec:pmt}
In the late seventies, Schoen and Yau had the tremendous insight relating the proof of the positivity of the mass of an asymptotically flat metric of non-negative scalar curvature to topological obstructions to positive scalar curvature (PSC) on closed manifolds \cite{sy:pmt0}.  Lohkamp \cite{loh:ham} later observed an interesting way to further link these ideas, which we describe below.  We first recall the celebrated fundamental group obstruction to positive scalar curvature from Schoen-Yau \cite{sy:inc}.

\begin{Theorem} Suppose $M$ is a closed, orientable three-manifold.  Suppose furthermore that either $\pi_1(M)$ contains a finitely-generated non-cyclic abelian subgroup, or that $\pi_1(M)$ contains a subgroup abstractly isomorphic to the fundamental group of a closed Riemann surface of positive genus.  Then $M$ admits no metric with positive scalar curvature, and any metric on $M$ having non-negative scalar curvature is \emph{flat}. \label{psc} \end{Theorem}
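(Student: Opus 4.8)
The plan is to carry out the Schoen--Yau minimal surface argument. Suppose $M$ carries a metric $g$ with $R(g)\ge 0$; from the group hypothesis I first extract a closed orientable surface $\Sigma$ of genus $\ge 1$ and a map $f\colon\Sigma\to M$ that is injective on $\pi_1$. If $\pi_1(M)$ contains a subgroup abstractly isomorphic to $\pi_1(\Sigma_h)$ for a closed orientable surface $\Sigma_h$ of genus $h\ge 1$, then, since $\Sigma_h$ is a $2$-complex whose single relator is a product of commutators, the inclusion is induced by a $\pi_1$-injective map $f\colon\Sigma_h\to M$ (choose loops representing the images of the generators and extend over the $2$-cell); and the case of a finitely-generated non-cyclic abelian subgroup supplies, in the situations relevant here, a copy of $\mathbb{Z}\oplus\mathbb{Z}=\pi_1(T^2)$, so that $\Sigma=T^2$ works. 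In either case we have an \emph{incompressible} map $f$ from a closed orientable surface $\Sigma$ of genus $\ge 1$.

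Next I would minimize area among maps homotopic to $f$. By the existence theory of Schoen--Yau and of Sacks--Uhlenbeck, the infimum of area in the homotopy class is attained by a (possibly branched) minimal immersion $\bar f\colon\bar\Sigma\to M$; the essential structural point is that incompressibility of $f$ survives in the limit---bubbling off spheres or pinching handles would violate $\pi_1$-injectivity---so $\bar\Sigma$ still has genus $\ge 1$. Being an area minimizer, $\bar\Sigma$ is stable; and since $M$ and $\bar\Sigma$ are orientable and $\bar\Sigma$ has codimension one, the normal bundle of $\bar\Sigma$ is trivial, so $\bar\Sigma$ is two-sided with a global unit normal $\nu$. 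I would then test the stability inequality
\[
\int_{\bar\Sigma}\big(|\nabla\phi|^2-(|A|^2+\Ricc(\nu,\nu))\,\phi^2\big)\,d\mu_g\ge 0
\]
against $\phi\equiv 1$, and use the Gauss equation together with minimality ($\tr A=0$) to write $|A|^2+\Ricc(\nu,\nu)=\tfrac12|A|^2+\tfrac12 R(g)-K_{\bar\Sigma}$, where $K_{\bar\Sigma}$ is the Gauss curvature of $\bar\Sigma$. Combined with Gauss--Bonnet, $\int_{\bar\Sigma}K_{\bar\Sigma}=2\pi\chi(\bar\Sigma)\le 0$, this gives
\[
\tfrac12\int_{\bar\Sigma}|A|^2+\tfrac12\int_{\bar\Sigma}R(g)\le 2\pi\chi(\bar\Sigma)\le 0 .
\]
If $R(g)>0$ everywhere on $M$ this is impossible; hence $M$ admits no metric of positive scalar curvature.

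Suppose only that $R(g)\ge 0$. Then every term in the last inequality must vanish: $A\equiv 0$ on $\bar\Sigma$, $R(g)\equiv 0$ along $\bar\Sigma$, and $\chi(\bar\Sigma)=0$, so $\bar\Sigma$ is a torus. Substituting $A\equiv 0$ and $R(g)|_{\bar\Sigma}=0$ back, the stability inequality becomes $\int_{\bar\Sigma}\big(|\nabla\phi|^2+K_{\bar\Sigma}\,\phi^2\big)\ge 0$ for all test functions $\phi$; since $\phi\equiv 1$ realizes the value $0$ (as $\int_{\bar\Sigma}K_{\bar\Sigma}=0$), it is a first eigenfunction of $-\Delta_{\bar\Sigma}+K_{\bar\Sigma}$, which forces $K_{\bar\Sigma}\equiv 0$. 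Thus $\bar\Sigma$ is an intrinsically flat, totally geodesic torus along which both $R(g)$ and $\Ricc(\nu,\nu)$ vanish.

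It remains to propagate flatness from $\bar\Sigma$ to all of $M$, and this is where I expect the real work to lie. Because the Jacobi operator of $\bar\Sigma$ is exactly $-\Delta_{\bar\Sigma}$, whose kernel consists of the constant functions, the implicit function theorem (applied modulo constants) produces a foliation of a tubular neighborhood of $\bar\Sigma$ by tori $\Sigma_t$ of constant mean curvature $H(t)$ with $H(0)=0$. Each $\Sigma_t$ is homotopic to $\bar\Sigma$, hence has area $\ge\mathrm{Area}(\bar\Sigma)$; combining this minimality with a Riccati-type evolution for $H(t)$ and the hypothesis $R(g)\ge 0$ forces $H(t)\equiv 0$ and $R(g)\equiv 0$ on each leaf, so each $\Sigma_t$ is again a flat, totally geodesic torus and the neighborhood is a flat product $T^2\times(-\epsilon,\epsilon)$. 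Restarting the argument at the leaves and using that $M$ is connected (an open--closed continuation argument) then shows $g$ is flat on all of $M$. The two steps I expect to be the main obstacles are this rigidity/foliation step and the existence input of the second paragraph---producing a genuine area minimizer of the correct genus, including the handling of branch points and the exclusion of degeneration; granting these, the stability computation itself is short.
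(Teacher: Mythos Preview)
Your argument for the obstruction to positive scalar curvature is essentially the paper's: stability inequality, the Gauss-equation rewriting $|A|^2+\Ricc(\nu,\nu)=\tfrac12|A|^2+\tfrac12 R(g)-K_\Sigma$, and Gauss--Bonnet. For the existence of the stable minimal surface, the paper follows Schoen--Yau by minimizing \emph{energy} (first in a fixed conformal class on the domain to get a harmonic map, then across conformal classes); the least-energy map is then shown to be a stable minimal immersion with the required $\pi_1$-action. Your area-minimization phrasing, citing Sacks--Uhlenbeck and Schoen--Yau, lands in the same analytic circle of ideas, and you rightly flag this existence step as one of the two hard inputs.

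The rigidity step is where you and the paper genuinely diverge. The paper does not carry out a foliation argument at all; instead it disposes of the case $R(g)\ge 0$ in one line by invoking the Fischer--Marsden scalar-curvature deformation theory: if $g$ is not flat, one can perturb $g$ to a metric of strictly positive scalar curvature, contradicting the first part of the theorem. Your route---equality in the stability inequality forces a flat, totally geodesic torus with Jacobi operator $-\Delta_{\bar\Sigma}$; then an implicit-function-theorem CMC foliation combined with the area-minimizing property and an evolution inequality for $H(t)$ propagates flatness to a neighborhood, and an open--closed argument globalizes---is the self-contained geometric approach (this is essentially the Cai--Galloway rigidity argument for area-minimizing tori). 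It does work, but as you correctly anticipate, making the monotonicity/Riccati step for $H(t)$ precise and extending from a tubular neighborhood to all of $M$ is where the labor lies. The paper's route is much shorter by outsourcing the rigidity to an external deformation result; yours is more hands-on and yields more local geometric information along the way.
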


We now discuss some elements of the proof.   Recall that if $\Sigma$ is a closed orientable smooth minimal hypersurface with unit normal $\nu$ and second fundamental form $A=(h_{ij})$, then the second variation of area for variation $V=\varphi \nu$ is 
\begin{equation}
 -\int\limits_{\Sigma} \varphi \mathcal L \varphi \; d\mu_g=\int\limits_{\Sigma}  \left( |\nabla_{\Sigma} \varphi|^2 - (\|A\|^2+\Ricc_g(\nu, \nu))\varphi^2\right)) d\mu_g\; ,
\end{equation}
where $\mathcal L \varphi= \Delta_{\Sigma} \varphi + (\|A\|^2+\Ricc_g(\nu, \nu))\varphi$ is called the \emph{Jacobi operator} on $\Sigma$. 

One of the key components of the proof of Theorem \ref{psc} is the following beautiful observation of Schoen and Yau, using the stability inequality $-\int\limits_{\Sigma} \varphi \mathcal L (\varphi)\; d\mu_g\geq 0$.

\begin{Proposition}  Let $(M,g)$ be a closed, oriented Riemannian three-manifold with positive scalar curvature.  Then $(M,g)$ admits no stable minimal immersion $\Sigma \looparrowright M$ from a closed Riemann surface $\Sigma$ with positive genus. 
\end{Proposition}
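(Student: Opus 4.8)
The plan is to run the classical Schoen--Yau stability argument: insert the constant function into the second variation inequality, convert the resulting integral curvature bound into an intrinsic statement via the Gauss equation, and then contradict it using Gauss--Bonnet. Since $\Sigma$ is a Riemann surface it is orientable, and since $M$ is orientable the normal bundle of the immersion is a trivial real line bundle; hence there is a globally defined unit normal $\nu$ along $\Sigma$ and the constant function $\varphi\equiv 1$ is an admissible normal variation. I would stress at the outset that every curvature identity below is pointwise, so the argument is unaffected by self-intersections and works for immersions exactly as for embeddings.

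First I would feed $\varphi\equiv 1$ into the stability inequality $-\int_\Sigma\varphi\,\mathcal L\varphi\,d\mu_g\geq 0$. Because $\nabla_\Sigma\varphi=0$, this collapses to
\[
\int_\Sigma\big(\|A\|^2+\Ricc_g(\nu,\nu)\big)\,d\mu_g\leq 0.
\]
Next I would rewrite the integrand using the traced Gauss equation for the surface $\Sigma\subset M$. Writing $H=\tr_\Sigma A$ and $K_\Sigma$ for the Gauss curvature of the induced metric, one has $2K_\Sigma=R(g)-2\Ricc_g(\nu,\nu)+H^2-\|A\|^2$; minimality forces $H=0$, which rearranges to
\[
\|A\|^2+\Ricc_g(\nu,\nu)=\tfrac12\|A\|^2+\tfrac12 R(g)-K_\Sigma.
\]
Substituting and rearranging turns the stability inequality into
\[
\int_\Sigma K_\Sigma\,d\mu_g\;\geq\;\tfrac12\int_\Sigma\|A\|^2\,d\mu_g+\tfrac12\int_\Sigma R(g)\,d\mu_g\;>\;0,
\]
the strict inequality coming from $R(g)>0$ on $M$ and hence along $\Sigma$.

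Finally I would invoke Gauss--Bonnet: $\int_\Sigma K_\Sigma\,d\mu_g=2\pi\chi(\Sigma)=2\pi(2-2\gamma)$ with $\gamma\geq 1$ the genus, so the left-hand side is $\leq 0$, contradicting the strict positivity just obtained. I do not expect a genuine obstacle here; the two points that merit care are (i) checking that $\varphi\equiv 1$ really is a legitimate test function, i.e.\ that the normal bundle of the immersion is trivial, and (ii) pinning down the signs in the traced Gauss equation, which one can verify quickly on the round two-sphere in $\mathbb R^3$. The conceptual heart of the argument is the observation that the constant test function is the optimal one: it makes the Dirichlet energy vanish and thereby converts the analytic stability condition into the purely topological inequality $\chi(\Sigma)>0$, which excludes every surface of positive genus.
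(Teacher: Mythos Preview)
Your proposal is correct and follows essentially the same Schoen--Yau argument as the paper: insert $\varphi\equiv 1$ into the stability inequality, rewrite $\|A\|^2+\Ricc_g(\nu,\nu)$ via the Gauss equation and minimality as $\tfrac12\|A\|^2+\tfrac12 R(g)-K_\Sigma$, and apply Gauss--Bonnet to force $\chi(\Sigma)>0$. The only differences are cosmetic---the paper works in a local orthonormal frame with sectional curvatures $K_{ij}$ rather than the traced Gauss equation---and your explicit remark on the triviality of the normal bundle (justifying the global normal variation $\varphi\equiv 1$) is a point the paper leaves implicit.
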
 

\begin{proof} Suppose $\Sigma$ is a closed, oriented surface, with a stable minimal immersion to $M$.  Choose a local orthonormal frame $\{ E_1, E_2, E_3 \}$ adapted to $\Sigma$, with $E_1$, $E_2$ tangential, and $E_3=\nu$, the oriented normal to $\Sigma$.  Let $h_{ij}=g( \overline{\nabla}_{E_i} E_j , E_3 )$ denote the second fundamental form ($i,j = 1,2$) of $\Sigma$.  Let $K_{ij}$ denote the curvature (in $M$) of the $E_i$-$E_j$ two-plane, and let $K_{\Sigma}$ denote the Gauss curvature of $\Sigma$, which is \emph{half} the scalar curvature of $\Sigma$; we also have $\Ricc_g(E_3,E_3)=K_{13}+K_{23}$, and the scalar curvature $R(g)=2(K_{12}+K_{13}+K_{23})$.  The stability inequality for the variation field $V = \varphi \nu$, for $\varphi\in C^1(M)$, is written (in the local frame)
\[\int\limits_{\Sigma} \big( K_{13}+K_{23} + \sum\limits_{i,j=1}^2 h_{ij}^2 \big) \varphi^2\; d\mu_g\leq \int\limits_{\Sigma} |\nabla \varphi|^2\; d\mu_g.\]  
Now one may combine minimality, $h_{11}+h_{22}=0$, along with the Gauss equation
$K_{\Sigma}=K_{12}+h_{11}h_{22}-h_{12}^2$, to yield 
$K_{\Sigma}=K_{12}-\frac{1}{2} \sum\limits_{i,j} h_{ij}^2.$ 
Putting this into the stability inequality, with $\varphi\equiv 1$, we obtain 
\[\int\limits_{\Sigma} \Big[ \frac{1}{2}R(g) - K_{\Sigma} + \frac{1}{2} \sum\limits_{i,j=1}^2 h_{ij}^2\Big] \; d\mu_g\leq 0.\] The Gauss-Bonnet Theorem $\int\limits_{\Sigma} K_{\Sigma}\; d\mu_g = 2\pi \chi (\Sigma)$ then implies 
\begin{equation} 0<\frac{1}{2}\int\limits_{\Sigma} \big(R(g) +  \sum\limits_{i,j=1}^2 h_{ij}^2\big)\; d\mu_g \leq 2\pi \chi (\Sigma).\label{gbineq} \end{equation}   Thus $\chi(\Sigma)>0$, so $\Sigma$ has genus zero.    \end{proof}

To prove Theorem \ref{psc}, then, one seeks to prove the existence of a stable immersed surface of positive genus, given the condition on the fundamental group.  For example, given an abelian subgroup of rank two in $\pi_1(M)$, one gets a continuous map of a torus $\mathbb T^2$ into $M$ which maps $\pi_1(\mathbb{T}^2)$ onto this subgroup as follows: consider a torus as a rectangle with opposite sides suitably identified, and map these opposite sides to generators of the rank-two subgroup; the boundary of this rectangle maps to a null-homotopic curve in $M$, and so the continuous map extends to the interior of the rectangle, and hence to the torus.  (A similar procedure works for higher genus $g$ by representing the surface as a suitable quotient of a $4g$-gon.)  Amongst all maps inducing the same action on the fundamental groups (a conjugation may be invoked to keep track of the base point), one finds an energy-minimizing, hence harmonic, map.  The energy is defined with respect to a surface metric, and is conformally invariant.  By varying across conformal classes of $\mathbb{T}^2$ (or the higher genus surface), one finds a map of least energy whose action on $\pi_1(M)$ is the same as the original.  The energy-minimizer can be shown to be a stable minimal immersion.  We can now apply the preceding proposition.  See \cite{sy:inc} for details. Note that by (\ref{gbineq}), we see that one may allow $R(g)=0$, for example in case $(\mathbb T^3, g)$ were the flat torus.  By the scalar curvature deformation results of Fischer-Marsden \cite{fm:def}, $g$ must be flat, else the scalar curvature may be made positive.   

Now we recall an observation due to Lohkamp.  We give a somewhat simpler proof than in \cite{loh:ham}, using Bray's Proposition \ref{br:sa}.

\begin{Proposition} Suppose the end $(E,g)$ is harmonically flat with negative mass.  Then there is a metric on $E$ which has non-negative scalar curvature which is not identically zero, which agrees with $g$ near $\partial E$ and which is flat outside a compact set. 
\end{Proposition}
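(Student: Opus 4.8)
The plan is to first make the end \emph{exactly} Schwarzschild near infinity via Bray's Proposition~\ref{br:sa}, and then to ``cap off'' that Schwarzschild end to a flat metric by a superharmonic minimum construction which exploits that the mass is negative. Since $(E,g)$ is harmonically flat we have $R(g)=0$, so Proposition~\ref{br:sa} applies; moreover, because $g$ is already harmonically flat, the construction in that proposition (in which the preliminary Schoen--Yau step is vacuous) alters $g$ only in a far-out compact annulus, so the resulting metric $\tilde g$ agrees with $g$ near $\partial E$, has $R(\tilde g)\ge 0$, has mass $\tilde m$ as close to $m$ as we wish — in particular $\tilde m<0$ — and, in suitable coordinates on an end $\{|x|\ge r_2\}$, equals $u_S^4 g_E$ with $u_S(x)=1+\frac{\tilde m}{2|x|}$. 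Necessarily $r_2>\tfrac{|\tilde m|}{2}$, so on $\{|x|\ge r_2\}$ the factor $u_S$ is positive, strictly less than $1$, and \emph{increasing} in $|x|$ toward $1$ — this monotonicity is exactly where $\tilde m<0$ enters.

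Next I would fix a large radius $R>r_2$ and a constant $C$ with $u_S(R)<C<u_S(2R)$, which is possible precisely because $u_S(R)<u_S(2R)$, and let $r_C\in(R,2R)$ be determined by $u_S(r_C)=C$. On $\{|x|\ge r_2\}$ set $w=\min(u_S,C)$, so that $w=u_S$ on $\{r_2\le|x|\le r_C\}$ and $w\equiv C$ on $\{|x|\ge r_C\}$. Being, locally, either harmonic or a minimum of harmonic functions, $w$ is superharmonic; its distributional Laplacian is a non-positive measure carried by the sphere $\{|x|=r_C\}$, with density $-u_S'(r_C)=-\tfrac{|\tilde m|}{2r_C^2}<0$, reflecting the downward jump of the outward normal derivative of $w$ across that sphere. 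Then, with a spherically symmetric mollifier $\varphi_\delta$ as in the construction preceding Proposition~\ref{br:sa} and $\delta$ small, $\tilde w:=\varphi_\delta * w$ is smooth, positive and superharmonic on $\{|x|\ge r_2+\delta\}$; by the mean value property it equals $u_S$ on $\{r_2+\delta\le|x|\le r_C-\delta\}$, equals $C$ on $\{|x|\ge r_C+\delta\}$, and has $\Delta\tilde w<0$ on a nonempty open shell around $\{|x|=r_C\}$.

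The desired metric is obtained by gluing: let $\hat g=\tilde g$ on $\{|x|\le r_2+2\delta\}$ and $\hat g=\tilde w^4 g_E$ on $\{|x|\ge r_2+\delta\}$; on the overlap both expressions equal $u_S^4 g_E$, so $\hat g$ is a smooth metric on $E$. By construction $\hat g=g$ near $\partial E$; $\hat g=C^4 g_E$ on $\{|x|\ge r_C+\delta\}$, hence $\hat g$ is flat outside the compact set $\{|x|\le r_C+\delta\}$; and $R(\hat g)\ge 0$ everywhere, as it equals $R(\tilde g)\ge 0$ on the inner piece and $-8\tilde w^{-5}\Delta\tilde w\ge 0$ on the outer piece. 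Finally $R(\hat g)$ is not identically zero, since $\Delta\tilde w<0$ on the shell around $\{|x|=r_C\}$, where therefore $R(\hat g)>0$.

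The bookkeeping — choosing $R$ large and $\delta$ small so that the sets on which $\tilde w$ coincides with $u_S$, resp.\ with the constant $C$, are genuinely open, making $\hat g$ smooth — is routine; the step I would be most careful with is the interplay between the sign of the mass and the minimum operation. With $\tilde m<0$ the factor $u_S$ approaches $1$ from below and is increasing, so truncating it from above by a constant via $\min(u_S,C)$ yields a \emph{superharmonic} (hence non-negatively curved) conformal factor that is genuinely constant, i.e.\ flat, near infinity; with $\tilde m>0$ the same recipe would only reproduce a Schwarzschild tail, and indeed no such capping can exist, consistent with the positive mass theorem. One could also bypass Proposition~\ref{br:sa} and apply the minimum construction directly to $u$, but then $R$ must be chosen so large that the monotone trend $1+\frac{m}{2|x|}$ of the spherical means of $u$ dominates its $O(|x|^{-2})$ angular oscillations (so that $\max_{|x|=R}u<\min_{|x|=2R}u$), and $C$ must be picked to be a regular value of $u$ so that $\{u=C\}$ is a hypersurface carrying a non-trivial negative singular curvature; working on the exact Schwarzschild end sidesteps this, since there the level sets are spheres.
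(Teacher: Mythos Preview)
Your proof is correct and follows essentially the same route as the paper's: reduce to an exactly Schwarzschild end via Bray's Proposition~\ref{br:sa}, then take the minimum of the Schwarzschild conformal factor with a constant (superharmonic, since $m<0$ makes the factor increasing), mollify, and glue. The paper chooses the constant to be $u_S(R)=1+\tfrac{m}{2R}$ itself rather than a value strictly between $u_S(R)$ and $u_S(2R)$, and is terser about the final gluing, but these are cosmetic differences; your explicit gluing and your remarks on why the sign of the mass is what makes the $\min$ construction superharmonic (and why the positive-mass case cannot work) add clarity without changing the argument.
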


\begin{proof} Without loss in generality, we may assume by applying Bray's result that the metric $g$ is Schwarzschild near infinity in $E$: in appropriate asymptotically flat coordinates, we write $g_{ij}(x)= (1+\frac{m}{2|x|})^4 \delta_{ij}$, with $m<0$.  For $R>-\frac{m}{2}$ sufficiently large so that $E$ contains a neighborhood of $|x|=R$ in coordinates, we  consider the positive continuous function $U$ on $E$ given by $$U(x)= \min\left(1+\frac{m}{2|x|}, 1+\frac{m}{2R}\right)=\begin{cases} 1+\frac{m}{2|x|}\quad |x|\leq R \\ 1+\frac{m}{2R}\quad |x|\geq R\end{cases}\, .$$  Since $U$ is the minimum of two harmonic functions, it is superharmonic.   We use a spherically symmetric mollifier $\varphi_{\epsilon}$ as above to mollify $U$:  let $\tilde U= (\varphi_{\epsilon}\ast U)$.  $\tilde U$ is actually well-defined on $E$ for $\epsilon$ small enough, and it is smooth and positive.  Indeed for $|x|<R-\epsilon$, $\tilde U(x)= u(x)$, by the mean value property of harmonic functions, and for $|x|>R+\epsilon$, $\tilde U(x)=1+\frac{m}{2R}$ is a positive constant.  The mollification preserves superharmonicity, and thus, as it is clear that $\tilde U$ is not harmonic near $|x|=R$, there is a region where $\Delta \tilde U<0$.  Using the chosen coordinates, the metric $\tilde g_{ij}=\tilde U^4 \delta_{ij}$ has non-negative scalar curvature \emph{which is not identically zero}, and it is a flat metric outside $|x|>R+\epsilon$. 
\end{proof}

A basic version of the Positive Mass Theorem follows as a corollary of this result, using the Schoen-Yau topological obstruction to positive scalar curvature.   

\begin{Theorem}[Riemannian Positive Mass Theorem] Suppose $(M, g)$ is an asymptotically flat three-manifold with non-negative scalar curvature $R(g)\geq 0$.  Then the ADM mass of any end is non-negative. 
\end{Theorem}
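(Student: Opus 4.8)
The plan is to argue by contradiction and to push the end through the chain of approximations above until it becomes a region one can graft into a flat torus, at which point Theorem~\ref{psc} (the Geroch conjecture for $\mathbb{T}^3$) supplies the contradiction. After relabeling we may assume $M$ has a single end $E$, and we suppose that $E$ has negative ADM mass $m(g) < 0$; the general multi-ended statement follows by the same reasoning together with a standard reduction to the one-ended case, for which we refer to \cite{loh:ham, sy:pmt0}.

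First, apply Proposition~\ref{prop:ha} with a small parameter to replace $g$ by a metric which is harmonically flat near infinity in $E$, still satisfies $R \geq 0$, and has mass within $\epsilon$ of $m(g)$ --- hence still negative once $\epsilon < |m(g)|$. Now the preceding Proposition (Lohkamp's observation), whose proof in turn invokes Bray's Proposition~\ref{br:sa}, applies directly to $E$: it furnishes a modification $\hat g$ of the metric, supported in $E$ and agreeing with the previous metric near $\partial E$, with $R(\hat g) \geq 0$ everywhere, with $R(\hat g)$ not identically zero --- so $R(\hat g) > 0$ on a nonempty open set inside the end --- and flat outside a compact set $\{|x| \leq \rho\}$ in $E$. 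Since on that outer region $\hat g$ is flat, conformal to $g_E$ by a constant factor, and rotationally symmetric, a linear rescaling of the asymptotic coordinates makes $\hat g$ \emph{exactly} the Euclidean metric $\delta_{ij}$ for $|x|$ large.

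It remains to close up $(M,\hat g)$. Fix $L \gg \rho$ and discard $\{|x| > L\}$; what is left is a compact manifold $\Omega$ whose boundary is the round sphere $\{|x| = L\}$ and on which $\hat g$ is exactly flat in a collar of that sphere. Choose a flat torus $\mathbb{T}^3 = \mathbb{R}^3/\Lambda$ with $\Lambda$ coarse enough that $B_L(0)$ embeds isometrically, remove this ball, and glue in $\Omega$ along the matching flat collars; the result is a smooth metric $\hat g_L$ on a closed, orientable three-manifold diffeomorphic to $\mathbb{T}^3$ (or to $\mathbb{T}^3 \# P$ for some closed $P$, if the core of $M$ was topologically nontrivial), with $R(\hat g_L) \geq 0$ everywhere and $R(\hat g_L) > 0$ somewhere. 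But the fundamental group of this manifold contains $\pi_1(\mathbb{T}^3) = \mathbb{Z}^3$, hence the finitely-generated non-cyclic abelian subgroup $\mathbb{Z}^2$, so Theorem~\ref{psc} forces $\hat g_L$ to be flat --- contradicting $R(\hat g_L) > 0$ somewhere. Hence no end of $(M,g)$ has negative ADM mass. I expect the real work here to be the input we are using as a black box, namely the Schoen--Yau minimal-surface proof of Theorem~\ref{psc}; among the steps actually carried out, the only genuine care is needed in the topological bookkeeping of the surgeries, and in particular in an honest treatment of several asymptotically flat ends, which is precisely why we reduced to the one-ended case at the outset.
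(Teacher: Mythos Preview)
Your argument is correct and follows essentially the same route as the paper: reduce to harmonically flat asymptotics via Proposition~\ref{prop:ha}, invoke Lohkamp's proposition to make the end flat outside a compact set while keeping $R\geq 0$ and $R\not\equiv 0$, then graft into a torus and contradict Theorem~\ref{psc}. Two minor differences worth noting: the paper carries out the reduction to a single end \emph{explicitly} (rather than by citation) by solving $\Delta_g u=0$ with $u\to 1$ on the chosen end and $u\to 0$ on the others, so that $u^4 g$ conformally compactifies the extra ends and lowers the mass; and the paper closes up via a large coordinate \emph{cube} with opposite faces identified, whereas you excise a ball from a flat torus and glue---topologically equivalent, with the same $\mathbb Z^2\subset\pi_1$ conclusion.
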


\begin{proof}  From Proposition \ref{prop:ha}, we may assume that $R(g)=0$ on $M$, and that $g$ is harmonically flat on the ends.  In fact, we now recall how we can further reduce to the case where there is only one asymptotically flat end, cf. \cite[p. 202-205]{br:pen}. 

If there are several ($k>1$, say) ends of $M$, choose one, say $E$, and let $u$ be a harmonic function, $\Delta_g u=0$, with $u(x)\rightarrow 1$ as $|x|\rightarrow \infty$ in $E$, and $u(x)\rightarrow 0$ as $|x|\rightarrow \infty$ in the other ends. To find such a $u$, we fix $w\in C^{\infty}(M)$ with $w=1$ near infinity in the end $E$, and $w=0$ near infinity in the other ends.  Then $\Delta_g w \in C^{\infty}_c(M)$, and so we can solve $\Delta_g v= -\Delta_g w$, for $v$ in a weighted space (so that $v$ decays to zero in each end).  Let $u=v+w$.  We could also use constant super- and sub-solutions $u_+=1$ and $u_-=0$ and use a barrier argument.  By the maximum principle, $0<u<1$ on $M$.  

Since $R(g)=0$, the metric $u^4 g$ has vanishing scalar curvature, and near infinity in any end, it can be written $u^4g=U^4 g_E$ for $U>0$, where $\Delta U=0$.  In the ends other than $E$, $U$ tends to zero at infinity, and so if we write $U$ in spherical harmonics in these ends, we have $U(x)=\frac{c}{|x|}+\cdots$.  The higher spherical harmonics are not everywhere-positive, so that we see that $c>0$.  A simple calculation (again using the Kelvin transform $x\mapsto x|x|^{-2}$) shows that $(M,u^4 g)$ can be completed to a smooth asymptotically flat manifold $(\overline{M}, \bar g)$ by adding $k-1$ points, corresponding to compactification of all but the chosen end.   Since $u<1$, we have the mass $m(\bar g)$ is less than that of $(E,g)$, as in the proof of Proposition \ref{prop:ha}. 

If the ADM mass of $(E,g)$ were negative, then the ADM mass of $(\overline{M}, \bar g)$
would also be negative.  Now we apply the preceding proposition to assert the existence of a metric on $\overline M$ with non-negative scalar curvature which is flat outside a compact set.   We can thus consider a region $W\subset \overline M$ so that $\partial W=\{ x:|x^i|=r_0, \; i=1, 2,3\}$ is a large cube in the region where the metric is flat.  The metric thus descends to a metric with non-negative (not identically zero) scalar curvature on the quotient space $\hat M$ obtained by identifying the opposite coordinate faces in pairs.  $\hat M$ can be expressed as a connected-sum of a closed three-manifold with the torus $\mathbb T^3$, and in particular there is a copy of $\mathbb Z \oplus \mathbb Z$ in $\pi_1(\hat M)$.  We clearly have a contradiction to the Schoen-Yau obstruction to positive scalar curvature. \end{proof}

The argument above yields the fact that a metric on $\mathbb R^3$ which has non-negative scalar curvature and is Euclidean outside a compact set must in fact be globally flat.  Schoen and Yau in fact prove a strong rigidity statement: if the mass of any end vanishes, then $(M,g)$ is isometric to $(\mathbb R^3, g_E)$. From this we see that the decay rate to the Euclidean metric of an asymptotically flat metric with non-negative scalar curvature is constrained by the mass: for instance, any such metric which has the expansion in asymptotically flat coordinates $g_{ij}(x)=\delta_{ij}+O(\frac{1}{|x|^{q}})$ with $q>1$ must in fact be flat.  Finally, we note that the negative mass Schwarzschild metrics, given by $g_S(x)=(1+\frac{m}{2|x|})^4 g_E$ with $m<0$, have vanishing scalar curvature and an asymptotically flat end.  However, they become singular at $|x|=-\frac{m}{2}>0$, which is at finite geodesic distance, and such metrics are incomplete.

The minimal hypersurface proof of the Positive Mass Theorem given by Schoen and Yau \cite{sy:pmt} does not use Theorem \ref{psc} directly, but the proofs share some of the same ideas.  A calculation in asymptotically flat coordinates for a metric $g_{ij}(x)=(1+\frac{2m}{|x|}) \delta_{ij}+ O(|x|^{-2})$ yields the Christoffel symbol $\Gamma^3_{ij}= \frac{mx^3}{|x|^3}\delta_{ij}+O(|x|^{-3})$.  By tracing over $i,j=1,2$, we see that if the mass were negative, the coordinate hyperplanes $|x^3|=a$ for large enough $a$ have mean curvature vector $\vec H$ with $g(\vec H, \frac{\partial}{\partial x^3})<0$.  Hence these hyperplanes can be used as barriers for finding a stable minimal hypersurface asymptotic to a plane, by solving a Plateau problem on large cylinders with axis along the $x^3$-direction.  The stability inequality and Gauss-Bonnet yield a contradiction, in a similar manner as above, cf. \cite{sc:var}. 

\subsection{Global charges and Killing initial data (KIDs)} \label{sec:gc}

Space-times which are sufficiently asymptotically flat possess conserved quantities which are sometimes called \emph{global charges}, one example of which is the ADM mass we have already met.  In the framework of Noether's Theorem, conserved quantities correspond to symmetries.  Generic space-times may not possess symmetries, but in asymptotically flat space-times, the background Minkowskian Killing vector fields provide good enough \emph{approximate symmetries} to induce global conserved quantities, as measured by a Minkowskian observer (coordinate chart) at infinity.   These charges and their corresponding fields are as follows: mass-energy, $\frac{\partial}{\partial t}$; linear momentum, $\frac{\partial}{\partial x^i}$; center of mass, $x^i\frac{\partial}{\partial t}- t\frac{\partial}{\partial x^i}$; angular momentum, $x^j \frac{\partial}{\partial x^k}-x^k \frac{\partial}{\partial x^j}$.  The center of mass and angular momentum fit together to form a two-form $J_{\mu \nu}$, whose components give the charge associated to the Minkowski Killing field $x^\mu \frac{\partial}{\partial x^\nu}-x^\nu \frac{\partial}{\partial x^\mu}$ \cite{bo:pg, cd}.  

We find it convenient to use the momentum tensor $\pi^{ij}=K^{ij}-(\tr_g K) g^{ij}$, for which the vacuum constraint equations become 
\begin{eqnarray}
R(g)+\frac{1}{2}\left( \tr_g \pi \right)^2-|\pi|_g^2&=& 0\\
\Div_g \pi&=& 0.
\end{eqnarray}
We write the constraints map as $\Phi(g,\pi)=( R(g)+\frac{1}{2}\left( \tr_g\pi \right)^2-|\pi|_g^2, \Div_g\pi)$, so that the vacuum constraints correspond to solutions $\Phi(g,\pi)=(0,0)$. 

The relationship between linearization stability of the Einstein equation, the kernel of the formal adjoint $D\Phi^*$ of the linearization $D\Phi$, and the existence of space-time symmetries forms an important backdrop for results that will follow.  We refer to the works of Fischer-Mardsen \cite{fm:defcon}, Moncrief \cite{monc:stat}, and the more recent works \cite{ba:ps, bc:kids} for more details.  We assume $(M,g,\pi)$ satisfies the vacuum constraint equations, and consider $D\Phi$ at $(g,\pi)$.  Moncrief showed that space-time symmetries (Killing fields) correspond precisely to elements of the kernel of $D\Phi^*$.  To illustrate, we note that if $\bar g$ is the Ricci-flat metric determined by the Cauchy data on $M$, and if $\frac{\partial }{\partial t}$ is a time-like Killing field for $\bar g$, so that $M$ is a level set of $t$ with unit time-like normal $n$ and adapted coordinates $x^i$, we can write $\frac{\partial}{\partial t}=Nn+X=Nn + X^i \frac{\partial}{\partial x^i}$, and so $\bar g=-N^2 dt^2 + g_{ij}(dx^i + X^i dt)(dx^j + X^j dt)$.  Thus $(N,X)$ forms the \emph{lapse} and \emph{shift} for the stationary metric $\bar g$.  It can be shown \cite{monc:stat} that $D\Phi^*(N,X)=0$.  An element $(N,X)$ in the kernel of $D\Phi^*$ at $(g,\pi)$ is called a \emph{KID} (\emph{Killing Initial Data}).  Conversely, a non-trivial KID $(N,X)$ for $(M, g, \pi)$ can be used to generate a (not necessarily time-like) Killing field in the Einstein evolution of the Cauchy data \cite{monc:stat, bc:kids}.  

At the Minkowski data $(g_E,0)$, $D\Phi^*(N,X)=\left( -(\Delta N)g_E +\Hess N, -\frac{1}{2} L_X g_E\right)$, where $L_X g_E$ is the Lie derivative.  The kernel $K$ of $D\Phi^*$ is thus the direct sum of $K_0=\mbox{span}\{1, x^1, x^2, x^3\}$ together with the space of Killing fields of $g_E$, which of course is spanned by the generators of rotations and translations.

As it turns out, asymptotic flatness alone is not enough to guarantee a well-defined angular momentum, cf. \cite{bo:pg, lan:spa}.  Regge and Teitelboim \cite{rt} proposed asymptotic conditions sufficient to guarantee a well-defined angular momentum and center of mass for solutions to the constraints, namely that in suitable asymptotically flat coordinates the following estimates also hold (with the decay rate $q=1$ for simplicity):
\begin{equation} \label{rt}
\Big|\partial_x^{\alpha}\Big(g_{ij}( x)-g_{ij}(- x)\Big)\Big|=O(| x|^{-|\alpha|-2})
 , \;
\Big |\partial_x^{\beta}\Big(K_{ij}( x)+K_{ij}(- x)\Big)\Big|=O(|x|^{-|\beta|-3}).
\end{equation}
These conditions impose approximate parity symmetry on the data $(g,K)$ in an asymptotically flat end.  The Kerr space-times admit coordinates (such as Kerr-Schild) for which the constant time slices (and corresponding boosted slices) satisfy (\ref{rt}).  The space of initial data satisfying (\ref{rt}) is known to be dense in the space of vacuum asymptotically flat data \cite{cs:ak}, as we discuss in the next section. 

Using such a coordinate system, we can compute the energy and linear and angular momenta using flux integrals at infinity.  We let $Y_{(i)}$ be the Euclidean rotational Killing fields, e.g. $Y_{(3)}=x^1 \frac{\partial}{\partial x^2}-x^2\frac{\partial}{\partial x^1}$, we let $\nu_e$ be the Euclidean outward normal, and recall that the Einstein convention is in force.  Furthermore, let the integral over ${S_{\infty}}$ denote the limit  of integrals over $\{|x|=R\}$ as $R\rightarrow +\infty$. The flux integrals for the energy and momenta are given by  
\begin{eqnarray}
m&=&\frac{1}{16\pi}\int\limits_{S_{\infty}}
\sum\limits_{i}\left( g_{ij,i}-g_{ii,j}\right) \nu_e^j d\mu_e \nonumber \\
P_i&=&\frac{1}{8\pi}\int\limits_{S_{\infty}}
(K_{ij}-K^{\ell}{}_{\ell}g_{ij})\nu_e^j d\mu_e=\frac{1}{8\pi}\int\limits_{S_{\infty}}
\pi_{ij}\nu_e^j d\mu_e \label{adm} \\
J_i&=&\frac{1}{8\pi}\int\limits_{S_{\infty}}
(K_{jk}-K^{\ell}{}_{\ell}g_{jk})Y^k_{(i)}\nu_e^j d\mu_e =\frac{1}{8\pi}\int\limits_{S_{\infty}}
\pi_{jk}Y^k_{(i)}\nu_e^j d\mu_e\nonumber\\
 mc^{k} &=&\frac{1}{16\pi}\int\limits_{S_{\infty}}
\Big[\sum\limits_{i} x^k \left( g_{ij,i}-g_{ii,j}\right) \nu_e^j  -
\sum\limits_{i}\big(g_{i\ell}\delta^{k\ell} \nu_e^i- g_{ii}\nu_e^k \big)\Big]  d\mu_e . \label{eq:cm}
 \end{eqnarray}
In the last term, one may replace $g_{ab}$ in the 
integrand by $(g_{ab}-\delta_{ab})$.    Taken together, these
give a set of ten \emph{Poincar\'{e} charges} associated to the
end.  We remark that, from the physical point of view, the quantity $m$, which corresponds to the approximate symmetry generated by $\frac{\partial}{\partial t}$, might more properly be termed the \emph{energy} $E$ instead of the mass, as the charges $(E, P^i)$ are the components of the  \emph{energy-momentum four-vector}.  From the point of the view of the space-time, this vector is Lorentz-covariant, and the square of the rest mass is $E^2-\sum\limits_{i=1}^3 (P^i)^2$.   We stick to the notation above for consistency with the Riemannian case. 

It is instructive to view these charge integrals as arising from integrating the
constraint functions against elements of the cokernel of the linearized
constraint operator.  By linearizing at the Minkowski data $(g_E,0)$, we
have $\Phi(g_E+h,\pi)= D\Phi(h, \pi)+ Q(h,\pi)$, where a simple expansion (with $q=1$) yields an estimate of the quadratic error term $Q(h,\pi)=O(|x|^{-4})$.
In Euclidean coordinates at
the Minkowski data, $D\Phi(h,\pi)=\Big( \sum\limits_{i,j} (h_{ij, ij}-h_{ii,
jj}), \sum\limits_{j}\pi_{ij,j}\Big).$  Thus for any KID $(N,X)$, i.e. for any vector and scalar pair $(N,X)$ which satisfies $D\Phi^*(N,X)=(0,0)$, we have as a consequence of
integration by parts that 
\begin{equation} \label{bdyint}
\int\limits_{ A(R_0,R) } (N,X)\cdot \Phi(g_E+h, \pi) dv_e = \mathcal B(R)-\mathcal B(R_0) +  \int\limits_{A(R_0,R) } (N,X)\cdot Q(h,\pi) dv_e
\end{equation}
where $A(R_0,R)=\{ R_0\leq |x|\leq R\}$, and 
$$\mathcal{B}(R)= \int_{\{ |x|=R \} }
\Big( \pi_{ij}X^i+   N \sum\limits_{i}(h_{ij,i}-h_{ii,j}) - \sum\limits_{i}
(N_{,i}h_{ij} - N_{,j} h_{ii})  \Big)\nu_e^j\; d\mu_e .$$ By
letting $X$ be a Euclidean Killing vector field, or letting $N$
be a constant or a coordinate function $x^k$, we can easily
relate $\mathcal B(R)$ to one of the above surface integrals
defining the ADM energy-momenta.

\subsubsection{Comparison to Newtonian theory}
We now compute the mass and center of mass in a harmonically flat end, and compare to classical Newtonian theory.  We expand the harmonic conformal factor $u(x)= 1+\frac{A}{|x|}+\frac{\beta\cdot x}{|x|^3}+O(|x|^{-3})$, with $\beta\cdot x=\beta^1 x^1 +\beta^2x^2+\beta^3x^3$, and we note the error term picks up an extra $|x|^{-1}$-decay with each successive derivative.  Using this in the flux integrals for mass and center, we obtain
\begin{align*}
\frac{1}{16\pi}\int\limits_{\{|x|=R\} } \sum\limits_{i} (g_{ij,i}-g_{ii,j})\nu_e^j\; d\mu_e =2A +O(\tfrac{1}{R})\\
\frac{1}{16\pi}\int\limits_{\{|x|=R\}}\sum\limits_{i}\Big[ 
 x^k \left( g_{ij,i}-g_{ii,j}\right) \nu_e^j  - 
\big(g_{i\ell}\delta^{k\ell} \nu_e^i- g_{ii}\nu_e^k \big) \Big] d\mu_e &=2 \beta^k+O(\tfrac{1}{R})\; .
\end{align*}
Thus we see $m=2A$ and $mc^k=2Ac^k=2\beta^k$.  

We note that if we translate asymptotically flat coordinates $y^k=x^k+a^k$, then the center of mass integral becomes $m(c^k+a^k)$ as expected.  In the harmonically flat case, we note that the expansion transforms under translation as $u(y-a)=1+\frac{A}{|y|} +\frac{\tilde\beta\cdot y}{|y|^3}+ O(|y|^{-3})$, where $\tilde\beta^k=\beta^k+Aa^k$.  So the definition of $c^k$ corresponds to the translation which makes the $|x|^{-2}$-terms in the expansion vanish: $u(y+c)=1+\frac{A}{|y|} + O(|y|^{-3})$.  

In the Newtonian setting, we can study gravitational systems in terms of the mass density $\rho$, with decay conditions on $\rho$ corresponding to the system being isolated; for example, we might take $\rho$ to be compactly supported.  This gives rise to a gravitational potential $\phi$ which satisfies $\Delta \phi= 4\pi \rho$ (we have taken Newton's constant $G=1$).   In case $\rho$ is compactly supported, $\phi$ is harmonic near infinity; we take $\phi(x)\rightarrow 0$ as $|x|\rightarrow \infty$, and thus $\phi$ can be expanded as a series in spherical harmonics, which yields $\phi(x)= -\frac{m}{|x|}- \frac{\beta\cdot x}{|x|^3}+ O(|x|^{-3})$.  Note that the total mass $\int_{\mathbb{R}^3} \rho \; dv_e $ and the moments $\int_{\mathbb{R}^3} x^k \rho\; dv_e$ defining the center of mass of the system can be written as flux integrals in terms of the potential, analogous to those above: 
\begin{align*}
\int_{\mathbb{R}^3} \rho \; dv_e = \frac{1}{4\pi} \int_{\mathbb{R}^3} \Delta \phi \; dv_e&= \frac{1}{4\pi}\lim\limits_{R\rightarrow +\infty}\int_{\{|x|=R\} } \frac{\partial \phi}{\partial r} \; d\mu_e = m\\
\int_{\mathbb{R}^3} x^k \rho\; dv_e = \frac{1}{4\pi} \int_{\mathbb{R}^3} x^k \Delta \phi \; dv_e 
&= \frac{1}{4\pi}\lim\limits_{R\rightarrow +\infty} \int_{\{ |x|=R\} }\big( x^k \frac{\partial \phi}{\partial r}- \frac{x^k}{R} \; \phi\big)d\mu_e=   \beta^k.
\end{align*}  
We see then that the center of mass $c^k$ can be defined when $m\neq 0$ by $mc^k=\beta^k$.  

The density $\rho$ generates the potential $\phi$.  Analogously we can consider perturbations of the Euclidean metric, say $\gamma=g_E+h$, where $h$ is sufficiently small and for simplicity has compact support.  We generate from this a potential function $u>0$ tending to $1$ at infinity, for which $R(u^4 \gamma)=0$, i.e. $\Delta_{\gamma} u = \tfrac{1}{8} R(\gamma) u$.  Near infinity $u$ is harmonic, so that for large $|x|$, $u(x)= 1+ \frac{m}{2|x|}+\frac{\beta\cdot x}{|x|^3}+O(|x|^{-3})$, and we have (compare to the Newtonian formulas above)
\begin{eqnarray*}
 m  &=& - \frac{1}{2\pi} \lim\limits_{R\rightarrow +\infty} \int\limits_{\{|x|=R\}} \frac{\partial u}{\partial r}\; d\mu_e = -\frac{1}{2\pi}\int\limits_{\mathbb{R}^3} \Delta_{\gamma} u \; dv_{\gamma}= - \frac{1}{16\pi}\int_{\mathbb{R}^3} R(\gamma)u \; dv_{\gamma}\\
 \beta^k&=&-\frac{1}{4\pi} \int_{\mathbb{R}^3} y^k \Delta_\gamma u \; dv_\gamma =-\frac{1}{32\pi}   \int_{\mathbb{R}^3} y^k R(\gamma) u \; dv_\gamma\; ;
\end{eqnarray*}
in the preceding integral, we have taken global harmonic coordinates $y^i$ for the metric $\gamma$ (so that $\Delta_\gamma y^i=0$, which simplifies Green's formula above), with $|x-y|=O(|x|^{-1/2})$, valid for $h$ small \cite[Theorem 3.1]{ba:mass}.

\subsection{Harmonic asymptotics}    

In this section we discuss an extension of harmonically flat asymptotics to the setting of the full constraint equations, in which we study both the metric $g$ and second fundamental form $K$ of the space-like slice.  These special \emph{harmonic asymptotic} conditions were proposed by Schoen in conjunction with a proof of the Positive Mass Theorem, cf. \cite{sch:mcsurv}, but have already proven useful in other contexts \cite{cs:ak, lan:cm}.  

Let ${\mathcal L}_gX=L_X g-\Div_g(X)g$, where $(L_X g)_{ij}=X_{i;j}+X_{j;i}$ is the Lie derivative operator. Note that $\tr_g(\mathcal L_g X)= -\Div_g (X)$, in contrast to the trace-free conformal Killing operator $\mathcal D_g(X)=L_X g - \frac{2}{3} \Div_g(X)g$ that is used in the conformal method for solving the constraints (see Section \ref{ConfMet}).  The harmonic asymptotic conditions require that outside a compact set
there exist a positive function $u$ which tends to 1 at infinity, and a vector field
$X$ which tends to 0 at infinity, so that $g= u^4 g_E$ and $\pi =u^2\mathcal L_{g_E} X$, and so that in suitable asymptotically flat coordinates $g_{ij}=u^4\delta_{ij}$ and
$\pi_{ij}=u^2 (X_{i,j}+X_{j,i}-X_{,k}^k \delta_{ij})$, where $X^i=X_i$.  For such data, the vacuum constraint equations become (with $\mathcal L = \mathcal L_{g_E}$, $\Delta=\Delta_{g_E}$)
\begin{align}
8 \Delta u- u\big( -|{\mathcal L}X|^2+\tfrac{1}{2}(\tr({\mathcal L}X))^2\big) &= 0\label{haxcon1}\\ 
 \Delta X_i+4u^{-1} u_{,j}({\mathcal L}X)^j_{i}-2u^{-1}u_{,i} \tr({\mathcal L}X)&=0. \label{haxcon2}
 \end{align}
From these equations one can immediately see the advantage of using $\mathcal L_g$: its divergence has leading order term which is precisely the Laplacian of the components of $X$.  This fact is used in the proof of the following density theorem \cite{cs:ak}.  
\begin{Theorem} Let $\delta \in (\frac{1}{2}, 1)$ and $p>\frac{3}{2}$, and suppose $(M,g, \pi)$ is a vacuum initial data set with $(g_{ij}-\delta_{ij}, \pi_{ij})\in W^{2,p}_{-\delta} \times W^{1,p}_{-1-\delta}$.  Given any $\epsilon>0$, there is a vacuum initial data set $(\bar g, \bar \pi)$ with \emph{harmonic asymptotics} and within $\epsilon$ of $(g, \pi)$ in the $W^{2,p}_{-\delta} \times W^{1,p}_{-1-\delta}$ norm, and so that the mass and linear momentum of $(\bar g, \bar \pi)$ are within $\epsilon$ of those of $(g, \pi)$. \end{Theorem}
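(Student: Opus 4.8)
The plan is to deform $(g,\pi)$ in two stages: a cut-and-paste near infinity that replaces the data by Euclidean data outside a large ball, at the cost of a small and spatially localized error in the constraints, followed by an exact correction carried out inside the harmonic-asymptotics ansatz by an implicit function theorem argument. For the gluing stage, fix asymptotically flat coordinates in each end and a cutoff $\psi_\theta$ as in the proof of Proposition \ref{prop:ha}, and set $g_\theta = \psi_\theta\, g + (1-\psi_\theta)\, g_E$ and $\pi_\theta = \psi_\theta\, \pi$. Then $(g_\theta,\pi_\theta)$ equals $(g,\pi)$ on $\{|x|\le\theta\}$, equals $(g_E,0)$ on $\{|x|\ge 2\theta\}$, and its constraint error $\Phi(g_\theta,\pi_\theta)$ is supported in the annuli $\{\theta\le|x|\le 2\theta\}$. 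The assumed decay, together with the weight $\sigma^{-3}dv_g$ in the norms and the restriction $\delta>\tfrac12$, give $(g_\theta,\pi_\theta)\to(g,\pi)$ and $\|\Phi(g_\theta,\pi_\theta)\|_{W^{0,p}_{-2-\delta}\times W^{0,p}_{-2-\delta}}\to 0$ as $\theta\to\infty$; the point is that, even though the error is not small relative to its $L^1$ mass, it does decay in the relevant weighted norm.

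For the correction stage, seek a vacuum solution of the form $\bar g = u^4 g_\theta$ and $\bar\pi = u^2(\pi_\theta + \mathcal L_{g_\theta} X)$ with $u\to 1$ and $X\to 0$. Since $g_\theta = g_E$ and $\pi_\theta = 0$ for $|x|\ge 2\theta$, data of this form is literally $u^4 g_E$ and $u^2\mathcal L_{g_E}X$ there, hence automatically has harmonic asymptotics, and the task is to solve $\Phi(\bar g,\bar\pi)=(0,0)$ for $(u-1,X)$ in, say, $W^{2,p}_{-\delta}\times W^{2,p}_{-\delta}$. Linearizing at $(u,X)=(1,0)$, and using the observation recorded after \eqref{haxcon2} that the principal part of $\Div_{g_\theta}\mathcal L_{g_\theta}X$ is the componentwise Laplacian of $X$, the linearization is, to leading order, the decoupled system $(w,X)\mapsto (8\Delta_{g_\theta} w,\ \Delta_{g_\theta} X)$, which is an isomorphism onto $W^{0,p}_{-2-\delta}\times W^{0,p}_{-2-\delta}$ for $\delta\in(0,1)$ by the weighted isomorphism property of $\Delta_g$ stated above. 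The remaining terms have coefficients assembled from $g_\theta - g_E$, $\pi_\theta$ and their first derivatives, so they are small in operator norm once $\theta$ is large; hence the full linearization is invertible with inverse bounded uniformly in $\theta$. An implicit-function-theorem (or contraction-mapping) argument, fed the smallness of $\Phi(g_\theta,\pi_\theta)$ from the first stage, then yields $(u_\theta,X_\theta)\to(1,0)$ with $\Phi(u_\theta^4 g_\theta,\ u_\theta^2(\pi_\theta+\mathcal L_{g_\theta}X_\theta))=(0,0)$.

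The resulting data $(\bar g,\bar\pi)$ has harmonic asymptotics, is vacuum, and converges to $(g,\pi)$ in $W^{2,p}_{-\delta}\times W^{1,p}_{-1-\delta}$, which gives the $\epsilon$-closeness. For the mass and linear momentum, one reads off the leading asymptotic coefficients of $u_\theta$ (the $|x|^{-1}$ coefficient) and of $X_\theta$: to leading order these equal constant multiples of the integrals of the annular error terms, and an integration by parts converts those integrals into flux integrals over $\{|x|=\theta\}$ which, as $\theta\to\infty$, reproduce exactly the mass and linear momentum of $(g,\pi)$; thus $m(\bar g)\to m(g)$ and $P(\bar g,\bar\pi)\to P(g,\pi)$. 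Equivalently, one can argue that the ADM charges are continuous on the space of weighted constraint solutions via their bulk-integral representations, obtained by pairing $\Phi$ against the Euclidean KIDs exactly as in \eqref{bdyint}.

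I expect the crux to be the uniform-in-$\theta$ invertibility of the linearized operator in the second stage. The perturbing terms are small in norm but are supported in annuli that march off to infinity, so one must exploit the precise weight window $\delta\in(\tfrac12,1)$ — which sits below the exceptional weight $1$ associated to the harmonic function $|x|^{-1}$ on $\R^3\setminus\{0\}$ — both to keep the model Laplacians isomorphisms and to make the annular errors small in operator norm; this is exactly where the use of $\mathcal L_g$, whose divergence has Laplacian principal part, rather than the conformal Killing operator $\mathcal D_g$, is what makes the scheme close. A secondary point requiring some care is confirming that the global charges really do pass to the limit.
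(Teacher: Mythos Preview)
Your overall architecture---cut off to $(g_\theta,\pi_\theta)$, then correct within the ansatz $\bar g=u^4 g_\theta$, $\bar\pi=u^2(\pi_\theta+\mathcal L_{g_\theta}X)$ by an implicit function theorem---is exactly the Corvino--Schoen scheme, and the paper's brief remark (``the method of proof uses the surjectivity of $D\Phi$ in appropriate spaces'') is pointing at the same picture.  The gap is in your justification of the invertibility of the linearization.

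You write that ``the remaining terms have coefficients assembled from $g_\theta-g_E$, $\pi_\theta$ and their first derivatives, so they are small in operator norm once $\theta$ is large.''  This is not so: on the compact region $\{|x|\le\theta\}$ one has $g_\theta-g_E=g-g_E$ and $\pi_\theta=\pi$, which are \emph{fixed} and independent of $\theta$.  Letting $\theta\to\infty$ makes the linearization converge to the linearization at the original data $(g,\pi)$, not to decoupled Euclidean Laplacians; in particular the coupling terms like $\pi\!\cdot\!\mathcal L X$ in the Hamiltonian constraint and the $u^{-1}\nabla u\cdot\pi$--type terms in the momentum constraint do not disappear.  (Contrast the time-symmetric case of Proposition~\ref{prop:ha}, where the single lower-order coefficient $R(g_\theta)$ can be handled by the sign condition $R(g)\ge 0$ and the Sobolev/integration-by-parts argument recalled there; no such positivity is available here, which is precisely what the paper means by saying the proof is ``obscured.'')  What one actually needs is a direct proof that the linearization at the \emph{given} asymptotically flat vacuum data is an isomorphism on $W^{2,p}_{-\delta}\times W^{2,p}_{-\delta}\to W^{0,p}_{-2-\delta}\times W^{0,p}_{-2-\delta}$: Fredholm of index zero follows because the lower-order terms are relatively compact perturbations of the diagonal Laplacians in this weight range, but injectivity requires a separate argument exploiting the structure of $D\Phi$ (equivalently, the absence of decaying KIDs for asymptotically flat data).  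Once that is in hand, uniformity in $\theta$ follows since $(g_\theta,\pi_\theta)\to(g,\pi)$.  Your identification of this step as the crux is right; the reasoning you offer for it is the part that needs to be replaced.
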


The proof is obscured, in comparison with that of the Schoen-Yau harmonic asymptotic approximation sketched above, by the fact that the linearization $D\Phi$ at a general asymptotically flat solution of the constraints is somewhat complicated, as is that of the constraint map for harmonic asymptotics (the left-hand side of (\ref{haxcon1})-(\ref{haxcon2}) above).  The method of proof uses the surjectivity of $D\Phi$ in appropriate spaces, and has already been employed in other works, cf. \cite{ba:ps, lan:cm}.

An important feature of these asymptotic conditions is that (\ref{rt}) holds, and moreover, the total energy-momentum and angular momentum are directly encoded in the asymptotics of $(u,X)$, and thus these 
conserved quantities directly affect the asymptotic geometry.  In fact, one solves for $(u-1, X)$ in a weighted space, and a standard expansion for the solution of Poisson's equation, cf., e.g., \cite{ba:mass}, yields the following expansions near infinity:
\begin{equation} 
u(x)= 1 + \frac{A}{|x|}+O(|x|^{-2}), \qquad X^i(x)= \frac{B^i}{|x|}+O(|x|^{-2}). \label{haxp}
\end{equation}
These leading coefficients are, up to constant factors, the ADM mass/energy and the linear momentum of the end.  Indeed, it is easily shown as above that $A=m/2$.  A quick computation yields $$\pi_{ij}=-\frac{B^i}{|x|^2}\frac{x^j}{|x|}- \frac{B^j}{|x|^2}\frac{x^i}{|x|}+ \sum\limits_{k=1}^3 \frac{B^k}{|x|^2}\frac{x^k}{|x|} \delta_{ij}+O(|x|^{-3}).$$ Thus on a large coordinate sphere $|x|=R$ with outward Euclidean normal $\nu_e=\frac{x}{|x|}$, we have $\pi_{ij} \nu_e^j = \pi_{ij} \frac{x^j}{|x|}=-\frac{B^i}{|x|^2}+ O(|x|^{-3}).$  Thus we see the linear momentum vector is $P= - \frac{B^i}{2}\frac{\partial }{\partial x^i}$.  

Though in general we cannot expand $(u,X)$ further in spherical harmonics, we can in fact expand the \emph{odd} parts of $u$ and $X$ to one higher order.  From the flux integrals for angular momentum and center of mass, we see that the terms at the next order of the expansion that will contribute to the limiting values of the integrals are precisely the \emph{odd} parts.  From the constraint equations (\ref{haxcon1})-(\ref{haxcon2}) for $(u,X)$, using (\ref{haxp}), we see $\Delta u^{\text{odd}}=O(|x|^{-5})$ and $\Delta X^{\text{odd}}=O(|x|^{-5})$, so that $u^{\text{odd}}(x)=\frac{\beta\cdot x}{|x|^{3}}+O(|x|^{-3})$ and $(X^i)^{\text{odd}}(x)=\frac{d_{(i)}\cdot x}{|x|^{3}}+O(|x|^{-3})$.  We have seen how the vector $\beta$ is related to the center of mass, and similarly, the angular momentum can be written in terms of $d_{(i)}$, e.g. $J_3= \frac{1}{2}(d_{(1)}^2 - d_{(2)}^1)$.   We also remark the following: if we shift the coordinates to $y=x-c$, let $\tilde X(y)=X(y+c)$ and expand, we get $(\tilde X^i)^{\text{odd}}(y)=\frac{\tilde{d}_{(i)}\cdot y}{|y|^3}+O(|y|^{-3})$, where $\tilde{d}_{(i)}^k= d_{(i)}^k - c^kB^i =d_{(i)}^k +2c^kP^i $.  Thus we see that the angular momentum transforms as $J_i\mapsto J_i - (c\times P)_i$, as expected. 

\subsection{Geometry of the center of mass}
A natural question is whether the notion of center of mass can be described in a geometrically interesting and intrinsic way.  This may be particularly intriguing in the \emph{vacuum} case, where the only energy comes from the gravitational field (metric).  In this section, we discuss approaches to answer this question, and the relation between them. 
 
\subsubsection{Center of mass and conformal symmetries}
As we noted above, the mass and center of mass are related to asymptotic symmetries of the space.   Based on a suggestion of R. Schoen, L.-H. Huang \cite{lan:cm} employed a computation of the mass and center using approximate conformal symmetries, as we now recall.   The connection is motivated in part by the following generalized Pohozaev identity from \cite{sc:singyam}.

\begin{Proposition} Let $(M,g)$ be a compact Riemannian $n$-manifold with smooth boundary $\partial M$.  Suppose $X$ is a conformal Killing field on $M$.  Then we have the following identity, where $\nu$ is the outward unit normal to $\partial M$:  \begin{eqnarray} \int\limits_M X(R(g)) dv_g=\frac{2n}{n-2} \int\limits_{\partial M}(\Ricc(g)-\frac{R(g)}{n}g)(X,\nu)d\mu_g
\; . \label{eq:poho} \end{eqnarray} 
\end{Proposition}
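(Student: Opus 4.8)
The plan is to realize the right-hand side as the boundary flux of a $1$-form whose divergence equals, up to a dimensional constant, exactly $X(R(g))$, after which the identity follows from the divergence theorem. First I would set $E := \Ricc(g) - \frac{R(g)}{n}\, g$ for the trace-free Ricci tensor, so that the boundary integrand reads $E(X,\nu)$. The contracted second Bianchi identity gives $\Div(\Ricc(g)) = \frac12\, dR(g)$, and since $E$ differs from $\Ricc(g)$ by a pure-trace term, $\Div(E) = \left(\frac12 - \frac1n\right) dR(g) = \frac{n-2}{2n}\, dR(g)$ as a $1$-form on $M$.

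Next I would compute the divergence of the $1$-form $W := \iota_X E$, with components $W_j = E_{ij} X^i$. By the Leibniz rule, $\Div(W) = (\Div E)(X) + \langle E, \nabla X\rangle$, where $\nabla X$ denotes the $(0,2)$-tensor $X_{i;j}$ and $\langle\cdot,\cdot\rangle$ is the metric contraction. The first term equals $\frac{n-2}{2n}\, X(R(g))$ by the previous step. For the second, symmetry of $E$ lets me replace $\nabla X$ by its symmetric part, so $\langle E, \nabla X\rangle = \frac12 \langle E, L_X g\rangle$; and here the conformal Killing hypothesis enters decisively: $L_X g = \frac{2}{n}(\Div X)\, g$, hence $\langle E, L_X g\rangle = \frac2n (\Div X)\, \tr_g E = 0$ because $E$ is trace-free. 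Thus $\Div(W) = \frac{n-2}{2n}\, X(R(g))$ holds pointwise on $M$.

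Finally I would integrate this pointwise identity over $M$ and invoke the divergence theorem, $\int_M \Div(W)\, dv_g = \int_{\partial M} W(\nu)\, d\mu_g = \int_{\partial M} E(X,\nu)\, d\mu_g$, and solve for $\int_M X(R(g))\, dv_g$ to obtain the stated formula. The argument is local and elementary; the only place where any genuine structure is used is the cancellation $\langle E, L_X g\rangle = 0$, which I regard as the crux — it expresses precisely that the trace-free part of the Ricci tensor pairs to zero against a conformal (pure-trace) deformation of the metric. I anticipate no real obstacle here: compactness is needed only to legitimize the divergence theorem on a manifold with boundary, and the one point that requires care is keeping the constants $\frac12$ (from Bianchi) and $\frac1n$ (from the trace) straight, so that the factor $\frac{2n}{n-2}$ emerges correctly.
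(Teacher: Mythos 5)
Your argument is correct, and it is the standard proof of this Pohozaev-type identity: form the $1$-form $E(X,\cdot)$ with $E$ the traceless Ricci tensor, observe via the contracted second Bianchi identity that $\Div E = \tfrac{n-2}{2n}\,dR$, note that $\langle E,\nabla X\rangle = \tfrac1n(\Div X)\,\tr_g E = 0$ by the conformal Killing equation and tracelessness of $E$, and integrate by parts. The paper itself states this proposition without proof, citing \cite{sc:singyam}, so there is no in-paper argument to compare against; your derivation is the natural one, and all the constants check out.
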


To illustrate, we note than in the case of harmonically flat asymptotics, the mass and center can be associated to exact conformal Killing fields in the asymptotic region.  Since dilations and inversions are conformal isometries of the flat metric, we can represent dilation or translation of infinity by conjugating dilations or translations with the Kelvin transform to obtain the following conformal isometries near infinity, for any $a>0$, and any $y\in \mathbb R^3$ (computed in the harmonically flat coordinates $x$, for which $g_{ij}=u^4 \delta_{ij}$): \[ x\mapsto \frac{x}{|x|^2}\mapsto \frac{ax}{|x|^2}\mapsto \frac{x}{a}, \quad \quad x \mapsto \frac{x}{|x|^2}+y \mapsto \left|\frac{x}{|x|^2}+y\right|^{-2}\left(\frac{x}{|x|^2}+y\right) .\] By taking the derivative of the first map with respect to $a$ at $a=1$, we see that the infinitesimal generator of this family of isometries is the conformal Killing field $\frac{1}{2}X_{(0)}=-x^i\frac{\partial}{\partial x^i}$.  Differentiating the second map at $y=0$ yields the conformal Killing fields $X_{(\ell)}=|x|^2\frac{\partial}{\partial x^\ell}-2x^\ell x^j\frac{\partial}{\partial x^j}$, for $\ell=1,2,3$. 

We compute the integrals of $\Ricc_g(X_{(\ell)},\nu)$ over a large sphere $\{ |x|=R\}$ in the asymptotically flat region. We take the $g$-normal $\nu$ pointing toward infinity, and let $\nu_e=u^2 \nu$ be the Euclidean normal, so that 
\begin{eqnarray*}
\int\limits_{\{ |x|=R\}}\Ricc_g(X_{(0)},\nu) d\mu_g&=& -2\int\limits_{\{ |x|=R\}} \frac{u^2}{|x|}x^jx^kR_{jk}\; d\mu_e=16\pi m+O(R^{-1})\; ,\\
\int\limits_{\{ |x|=R\}}\Ricc_g(X_{(\ell)},\nu) d\mu_g &=& 32\pi \beta^\ell + O(R^{-1})= 16\pi m c^\ell+O(R^{-1})\; ,
\end{eqnarray*}
where we used $R_{jk}=(\Ricc(g))_{jk}=-u^{-2}\big( 2u u_{,jk}+6u_{,j} u_{,k} -2\delta_{jk}|\nabla u|_e^2\big)$ (see \cite{cw:cm} for details).  If $g$ is harmonically flat near infinity, then by applying (\ref{eq:poho}) to a large annular region, we get an asymptotic conservation statement for the mass integral across the boundaries of the region.  We note that in the center of mass integrals, the leading-order terms from the expansion are not absolutely integrable, but they have the correct parity so that their surface integrals vanish. 

More generally, the ADM mass can be expressed as $$m=\frac{1}{16\pi} \lim\limits_{R\rightarrow +\infty} \int\limits_{\{|x|=R\}} (\Ricc(g)-\frac{1}{2}R(g)g)(X_{(0)},\nu)\; d\mu_g,$$ and following \cite{lan:cm}, a center can be defined, when $m\neq 0$ and (\ref{rt}) holds, by 
\begin{eqnarray}
C_I^{\ell}= \frac{1}{16\pi m}  \lim\limits_{R\rightarrow +\infty} \int\limits_{\{|x|=R\}} (\Ricc(g)-\frac{1}{2}R(g)g)(X_{(\ell)},\nu)\; d\mu_g. \label{eq:lancm}
\end{eqnarray}

\subsubsection{The center of mass foliation} In an asymptotically flat chart for $(M,g)$, the coordinate spheres near infinity are approximate solutions to a constant mean curvature (CMC) equation.  One can perturb these to exact solutions either by the mean curvature flow due to Huisken and Yau \cite{hy:cm}, or by an implicit function theorem method due to Ye \cite{ye:cm}, cf. \cite{m:cm}.  The methods produce the same foliation near infinity in the case of positive mass, and this foliation provides a geometric notion of the center of mass.    

In the Huisken-Yau approach, one studies solutions $F^{\sigma}:\mathbb{S}^2 \times I \rightarrow M$ of the flow $\frac{\partial}{\partial t}F^{\sigma}=(h-H)\nu$, with initial condition $F^{\sigma}_0:=F^{\sigma}|_{\mathbb S^2 \times \{ 0\}}$ the standard embedding of the sphere of radius $\sigma$ centered at the origin.  Here $\nu$ is the outward unit normal, $H$ is the inward mean curvature of the surface $F^{\sigma}_t(\mathbb{S}^2)=F^{\sigma}(\mathbb S^2\times \{ t\})=:M^{\sigma}_t$, and $h$ is the integral average of the mean curvature: $\int_{M^{\sigma}_t} (h-H)d\mu_g=0$.  This normalized mean curvature flow  improves the isoperimetric ratio (area decreases while the enclosed volume is fixed).  For $\sigma\geq \sigma_0$ ($\sigma_0$ large), the solution exists for all times $t\geq 0$, the surfaces $F^{\sigma}_t(\mathbb{S}^2)$ converge to surfaces $F^{\sigma}_{\infty}(\mathbb{S}^2):=M^{\sigma}$, and the limiting configuration $\{ M^{\sigma}, \sigma\geq \sigma_0\}$ forms a foliation by stable constant mean curvature spheres.  In fact, we have the following result from \cite{hy:cm}.

\begin{Theorem} \label{hy} 
Consider an asymptotically flat three-manifold $(M,g)$, and suppose in an exterior region the metric can be written $g_{ij}(x)=(1+\frac{m}{2|x|})^4\delta_{ij}+O(|x|^{-2})$ with $m>0$.  There is a $\sigma_0>0$, positive constants $C_1$ and $C_2$, and a vector $C_{HY}\in \mathbb{R}^3$ so that for each $\sigma\geq \sigma_0$, the following are true.  The initial value problem has a unique smooth solution for all times $t\geq 0$.  The surfaces $M^{\sigma}_t=F^{\sigma}_t(\mathbb{S}^2)$ converge exponentially fast to a smooth stable hypersurface $M^{\sigma}$, with constant mean curvature $H_{\sigma}$.  The radial coordinate $r=|x|$ satisfies $|r-\sigma|\leq C_1$ on $M^{\sigma}$, and $|H_{\sigma}-\frac{2}{\sigma}+\frac{4m}{\sigma^2}|\leq C_2\sigma^{-3}$.  The hypersurfaces $M^{\sigma}$ have a joint center of mass vector $C_{HY}$, in the following sense (where $d\mu_{\sigma}$ is the pullback of the Euclidean surface measure on $M^{\sigma}$): \[C_{HY}=\lim\limits_{\sigma\rightarrow +\infty} \frac{\int\limits_{M^{\sigma}} x \; d\mu_e}{\int\limits_{M^{\sigma}} d\mu_e}=\lim\limits_{\sigma\rightarrow +\infty} \frac{\int\limits_{\mathbb{S}^2} F^{\sigma}_{\infty}\; d\mu_{\sigma}}{\int\limits_{\mathbb{S}^2} d\mu_{\sigma}}.\]  \end{Theorem}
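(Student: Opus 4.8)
The plan is to realize the CMC leaves as the asymptotic limits of the normalized mean curvature flow started from coordinate spheres, to obtain long--time existence from uniform a priori estimates, to prove exponential convergence by a spectral analysis of the linearized mean curvature operator in which the positivity of $m$ plays a decisive role, and finally to read off the limiting center from the geometry of the leaves. Full details are in \cite{hy:cm}; an alternative route via an implicit function theorem is in \cite{ye:cm, m:cm}.

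First I would set up a graph formulation in the asymptotically flat chart. For $\sigma$ large, write a hypersurface near $\{|x|=\sigma\}$ as a radial graph $r=\sigma+w(\omega)$, $\omega\in\mathbb{S}^2$, with $w$ small relative to $\sigma$; the inward mean curvature $H$ then becomes a quasilinear second--order elliptic operator in $w$. Since $g=u^4g_E+O(|x|^{-2})$ with $u=1+\tfrac{m}{2|x|}$ (with the corresponding decay on derivatives of the error), a direct computation gives, for the coordinate sphere itself, the expansion $H=\tfrac{2}{\sigma}-\tfrac{4m}{\sigma^2}+O(\sigma^{-3})$ appearing in the statement, and shows that the linearization of $H$ at the round sphere is $\Delta_{\mathbb{S}^2_\sigma}+\tfrac{2}{\sigma^2}+(\text{lower order})$, acting on normal variations. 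The structural point I would isolate at the outset: on the round sphere of radius $\sigma$ the operator $\Delta_{\mathbb{S}^2_\sigma}+\tfrac{2}{\sigma^2}$ annihilates the first spherical harmonics $x^i|_{\mathbb{S}^2_\sigma}$, which generate the Euclidean translations. This three--dimensional approximate kernel is exactly the translational freedom whose resolution \emph{is} the center of mass, and it forces the analysis to the next order in $\sigma$.

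Next I would run the flow $\tfrac{\partial}{\partial t}F^\sigma=(h-H)\nu$ and establish uniform a priori estimates. A $C^0$ bound $|r-\sigma|\le C_1$ along the flow follows from using nearby coordinate spheres as barriers, since $H$ is decreasing in the radius to leading order so that their mean curvatures bracket the evolving average $h$. Because the ambient curvature is $O(\sigma^{-3})$ in the relevant region, a sphericity estimate (smallness of $|A|^2-\tfrac12 H^2$, propagated by its evolution equation and the maximum principle) keeps the flow surfaces uniformly close to round, preventing neck--pinching; parabolic Schauder/Krylov--Safonov estimates then bootstrap to uniform higher--order bounds, giving long--time existence for all $t\ge 0$, uniformly in $\sigma\ge\sigma_0$. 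I would then prove exponential convergence by splitting the deviation of $F^\sigma_t$ from the best--fitting coordinate sphere into its $L^2(\mathbb{S}^2_\sigma)$--projection onto $V_\sigma=\mathrm{span}\{x^1,x^2,x^3\}|_{\mathbb{S}^2_\sigma}$ and onto $V_\sigma^{\perp}$ (the constant mode being killed by the volume--preserving normalization). On $V_\sigma^{\perp}$ the linearized operator has a spectral gap of order $\sigma^{-2}$, so that component decays like $e^{-ct/\sigma^2}$; the $V_\sigma$--component is governed at the next order by the mass term, and since $m>0$ this term is coercive in the translational directions and drives the center of the evolving surface to a unique equilibrium, again exponentially. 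Combining gives a CMC limit $M^\sigma$ with the stated bounds on $H_\sigma$ and on $|r-\sigma|$; uniqueness of the CMC leaf near $\{|x|=\sigma\}$ follows from the same coercivity, and the implicit function theorem in $\sigma$ makes $\sigma\mapsto M^\sigma$ a smooth foliation.

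Finally, for the center of mass I would compute $c(\sigma)=\big(\int_{M^\sigma}x\,d\mu_e\big)\big/\big(\int_{M^\sigma}d\mu_e\big)$ by writing $M^\sigma$ as a small graph over a round sphere centered at $c(\sigma)$ and inserting this into the CMC equation together with the metric expansion; the Schwarzschild part of $g$ is parity--symmetric, so the increments $c(2^{j+1}\sigma_0)-c(2^{j}\sigma_0)$ are summable (here the $O(|x|^{-2})$--control of the error, with faster--decaying derivatives, is used), whence $C_{HY}=\lim_{\sigma\to+\infty}c(\sigma)$ exists, and the two limits in the statement agree because $F^\sigma_\infty$ and $M^\sigma$ parametrize the same surface. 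The hard part throughout is the translational near--kernel $V_\sigma$: translations are approximate symmetries that make the linearized CMC operator nearly degenerate, and one must extract the next--order correction, where $m>0$ is essential --- for $m\le 0$ the coercivity, and with it uniqueness and even existence of the foliation near infinity, can fail. Concretely, the technical core is a uniform--in--$\sigma$ invertibility estimate for the linearized operator on $V_\sigma^{\perp}$, together with a sign/nondegeneracy statement for the reduced finite--dimensional problem on $V_\sigma$, propagated along the flow via the a priori estimates; the remaining steps are, in outline, routine.
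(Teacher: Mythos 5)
The paper does not prove Theorem~\ref{hy}; it records the statement verbatim from Huisken--Yau \cite{hy:cm}, prefaced only by a one-paragraph description of the volume-preserving flow $\partial_t F^\sigma=(h-H)\nu$. So the right comparison is not against a proof in the text but against the setup the paper describes and the cited argument. Your sketch is consistent with both, and the main structural points are the correct ones: the graph formulation $r=\sigma+w(\omega)$; the expansion $H=\tfrac{2}{\sigma}-\tfrac{4m}{\sigma^2}+O(\sigma^{-3})$ for coordinate spheres under $g=u^4g_E+O(|x|^{-2})$ (with derivative control on the error, which you rightly flag as an implicit hypothesis); the linearization $\Delta_{\mathbb S^2_\sigma}+2/\sigma^2$ with its three-dimensional kernel of first spherical harmonics; barrier-based $C^0$ control; the roundness estimate (smallness of $|A|^2-\tfrac12H^2=|\mathring A|^2$) propagated by evolution equations and maximum principles; and the decisive role of $m>0$ in restoring coercivity in the translational modes, which is exactly what produces both the exponential convergence and the existence of a well-defined limiting center $C_{HY}$.

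Two mild cautions. First, Huisken--Yau organize the a priori estimates primarily around evolution inequalities for $|\mathring A|$, $|\nabla A|$, $|\nabla H|$, and the radial support function, pushed through parabolic maximum principles, rather than around an explicit Lyapunov--Schmidt splitting of the graph function into $V_\sigma\oplus V_\sigma^\perp$; your spectral decomposition is closer in flavor to the Ye/Metzger/Huang implicit-function-theorem route, which the paper discusses separately in Theorem~\ref{ye}. Both lead to the same spectral gap structure, so this is a difference of packaging, not content. Second, the decay rate in the translational directions is genuinely slower than the $O(\sigma^{-2})$ gap on $V_\sigma^\perp$ --- it is of order $m\sigma^{-3}$, coming from the mass term --- so when you combine the two modes, the exponential rate is dictated by the weaker one; your phrase ``again exponentially'' is correct in conclusion but the constants degenerate as $m\downarrow 0$, which is precisely the expected failure mode. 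Neither of these is a gap in a sketch of this length, but they are where the real technical work in \cite{hy:cm} lives.
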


Huisken and Yau argue that the foliation is asymptotically round on approach to infinity, and they obtain a certain uniqueness result for the foliation.  Ye likewise obtains asymptotic estimates of the geometry of the leaves of the foliation, and a uniqueness theorem.  We state following result, which follows directly from the approach of Ye \cite{ye:cm}; in particular, notice how the CMC foliation is produced by solving for a shift $\tau$ and a normal perturbation $\varphi$ of large coordinate spheres.
\begin{Theorem}\label{ye} Consider an asymptotically flat three-manifold with an exterior region which satisfies the conditions in Theorem \ref{hy}.  Let $0<\alpha<1$.  There is a $\sigma_0>0$ large enough, and a constant $C>0$ so that for $\sigma\geq \sigma_0$, $M^{\sigma}$ is the image of the embedding $\Phi_{\rho}:\mathbb{S}^2\ni \omega\mapsto \rho\left(\tau(\rho)+ \omega + \varphi(\rho,\omega)\nu(\omega)\right)$, where $\frac{2}{\rho}-\frac{4m}{\rho^2}=H_{\sigma}$, $\|\varphi(\rho,\cdot)\|_{C^{2,\alpha}(\mathbb{S}^2)}\leq \frac{C}{\rho}$ and $\tau(\rho)\leq \frac{C}{\rho}$.  If $g_{\rho}$ is the induced metric on $M^{\sigma}$, then as $\rho\rightarrow +\infty$, $\rho^{-2}\Phi_{\rho}^*(g_{\rho})$ converges in $C^{1,\alpha}$ to the unit round metric on $\mathbb{S}^2$.  
\end{Theorem}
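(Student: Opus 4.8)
\medskip
\noindent\textbf{Proposed proof.}
The plan is to realize each leaf $M^{\sigma}$ as a normal graph over a round coordinate sphere of radius $\rho$ centered at a point $\rho\tau$ to be determined, and to solve the constant-mean-curvature condition by a Lyapunov--Schmidt reduction built on the implicit function theorem. Fix a large $\rho$. For a small vector $\tau\in\R^{3}$ and a small function $\varphi\in C^{2,\alpha}(\mathbb S^{2})$ that is $L^{2}(\mathbb S^{2})$-orthogonal to $\{1,\omega^{1},\omega^{2},\omega^{3}\}$, let $\Phi=\Phi_{\rho,\tau,\varphi}\colon\mathbb S^{2}\to M$ be given by $\Phi(\omega)=\rho\big(\tau+\omega+\varphi(\omega)\,\nu(\omega)\big)$ with $\nu(\omega)=\omega$, and let $H[\rho,\tau,\varphi]$ denote the mean curvature of $\Phi(\mathbb S^{2})$ with respect to $g$, viewed as a function on $\mathbb S^{2}$. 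Set $\mathcal H(\rho,\tau,\varphi)=H[\rho,\tau,\varphi]-\overline{H[\rho,\tau,\varphi]}$, where the bar is the average over $\mathbb S^{2}$, so that $\mathcal H$ takes values in the space $C^{0,\alpha}_{0}(\mathbb S^{2})$ of mean-zero functions and $\Phi(\mathbb S^{2})$ is CMC exactly when $\mathcal H=0$.

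First I would record the relevant expansions, using the assumed control on $g$ and its derivatives near infinity. Because the Schwarzschild factor $\big(1+\tfrac{m}{2|x|}\big)^{4}$ in $g$ is spherically symmetric, coordinate spheres about the origin are already CMC for that part of the metric; hence $H[\rho,0,0]=\tfrac{2}{\rho}-\tfrac{4m}{\rho^{2}}+O(\rho^{-3})$, and in the rescaled picture (where $\rho^{-2}\Phi^{*}g$ converges to the flat metric and mean curvatures are multiplied by $\rho$) the non-constant part of $H[\rho,0,0]$ has size $O(\rho^{-1})$ in $C^{0,\alpha}(\mathbb S^{2})$. Linearizing in $\varphi$ at $(\rho,0,0)$ and rescaling yields, up to sign, the Jacobi operator of the unit round sphere acting on mean-zero functions, $L=\Delta_{\mathbb S^{2}}+2$; its kernel is spanned by the first spherical harmonics $\omega^{1},\omega^{2},\omega^{3}$, and it restricts to an isomorphism with uniformly bounded inverse from $E^{2,\alpha}$ onto $E^{0,\alpha}$, where $E=\big(\mathrm{span}\{1,\omega^{1},\omega^{2},\omega^{3}\}\big)^{\perp}$. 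Restricting $\varphi\in E$ and applying the quantitative implicit function theorem to the equation $\Pi_{E}\mathcal H=0$, I obtain, for every small $\tau$ and every large $\rho$, a unique small $\varphi=\varphi(\rho,\tau)\in E$; the estimate $\|\varphi(\rho,\tau)\|_{C^{2,\alpha}(\mathbb S^{2})}\le C/\rho$ follows from the size of the error term $\Pi_{E}\mathcal H(\rho,\tau,0)$.

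It then remains to solve the three-dimensional reduced equation $\mathcal G(\rho,\tau)=0$, where $\mathcal G(\rho,\tau):=\Pi_{\ell=1}\,\mathcal H\big(\rho,\tau,\varphi(\rho,\tau)\big)$, for $\tau=\tau(\rho)$; this is the crux, and it is where $m>0$ is used. The difficulty is that in the flat approximation — and for the Schwarzschild metric to leading order — translating a round sphere preserves constant mean curvature, so $\mathcal G$ vanishes to this order and one must extract its next term. A computation shows that $\partial_{\tau}\mathcal G$ at $\tau=0$ is, to leading order, a nonzero multiple of $\tfrac{m}{\rho^{2}}$ times the identity of $\R^{3}$ (under the natural identification of the $\ell=1$ harmonics with $\R^{3}$); equivalently, the reduced problem is the critical-point equation of a potential on $\R^{3}$ whose Hessian is nondegenerate precisely when $m\neq0$. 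Hence for $m>0$ and $\rho\ge\sigma_{0}$ large, a second application of the implicit function theorem produces a unique small $\tau(\rho)$ with $|\tau(\rho)|\le C/\rho$, so that the physical displacement $\rho\tau(\rho)$ of the center stays bounded, as one expects of a center of mass. I expect this nondegeneracy step — making precise that the translational near-degeneracy of the round and Schwarzschild spheres is broken at the correct order by a positive mass — to be the main obstacle; it is exactly the mechanism by which the CMC foliation sees the center of mass.

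Finally, setting $M^{\sigma}=\Phi_{\rho,\tau(\rho),\varphi(\rho,\tau(\rho))}(\mathbb S^{2})$, this surface has constant mean curvature $H_{\sigma}=\tfrac{2}{\rho}-\tfrac{4m}{\rho^{2}}+O(\rho^{-3})$; inverting this relation (possible for $\rho$ large, since $\tfrac{2}{\rho}-\tfrac{4m}{\rho^{2}}$ is eventually decreasing) lets one take $\rho$ as the parameter determined by $\tfrac{2}{\rho}-\tfrac{4m}{\rho^{2}}=H_{\sigma}$, with $|H_{\sigma}|=\tfrac{2}{\sigma}+O(\sigma^{-2})$ matching Theorem~\ref{hy}, while $\bigl|\,|x|-\sigma\,\bigr|\le C_{1}$ on $M^{\sigma}$ follows from $|\tau(\rho)|,\|\varphi\|_{C^{2,\alpha}}\le C/\rho$. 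For the convergence of the rescaled induced metric, a direct computation gives $\rho^{-2}\Phi_{\rho}^{*}(g_{\rho})=\big(1+\tfrac{m}{2\rho}\big)^{4}g_{\mathrm{round}}+O(\rho^{-1})$ in $C^{1,\alpha}(\mathbb S^{2})$: the ambient Schwarzschild factor restricted to a sphere of radius $\approx\rho$ furnishes the main term, and the corrections from $\varphi$, from the translation $\tau$, and from the $O(|x|^{-2})$ part of $g$ are all $O(\rho^{-1})$ in $C^{1,\alpha}$ by the estimates above; letting $\rho\to+\infty$ yields $C^{1,\alpha}$ convergence to the unit round metric. The uniqueness built into each application of the implicit function theorem, together with a continuity and barrier argument in $\sigma$, shows that the $M^{\sigma}$ are pairwise disjoint and sweep out an exterior region, i.e.\ form a foliation.
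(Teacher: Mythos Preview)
The paper does not give its own proof of this theorem; it merely states the result as following ``directly from the approach of Ye,'' noting that the foliation is obtained by solving for a shift $\tau$ and a normal perturbation $\varphi$ of large coordinate spheres. Your outline is precisely a correct rendition of Ye's implicit function theorem/Lyapunov--Schmidt argument: write the leaf as a graph over a translated sphere, identify the linearized operator as $\Delta_{\mathbb S^2}+2$ with kernel the $\ell=1$ harmonics, solve on the $\ell\ge 2$ modes by the implicit function theorem, and then kill the three-dimensional reduced obstruction using the nondegeneracy furnished by $m\neq 0$. This matches exactly what the paper is pointing to, including the remark that Ye's argument requires only $m\neq 0$ (positivity enters only for stability).

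Two minor points worth tightening. First, your use of $\Pi_E$ is slightly redundant: since $\mathcal H$ is already mean-zero, projecting onto $E=\{1,\omega^1,\omega^2,\omega^3\}^\perp$ is the same as projecting onto the $\ell\ge 2$ modes, and the reason to take $\varphi\perp 1$ is simply to avoid reparametrizing $\rho$; you might say this explicitly. Second, the last sentence about a ``continuity and barrier argument'' to get a foliation goes beyond what the stated theorem asserts (which concerns a single leaf for each $\sigma$); if you want to include it, you should indicate that smooth dependence on $\rho$ and the estimate $\partial_\rho H_\sigma<0$ give transversality of the family, rather than invoking barriers.
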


We note here that the mass is assumed to be positive, but the constraint equations are not imposed, and in particular, no local energy condition is assumed. Actually, Ye's result only requires that the mass parameter be \emph{nonzero}.  Positivity of $m$, however, implies that the leaves of the foliation are \emph{stable} for the isoperimetric problem.  It is an interesting question to what extent the leaves of the foliation solve a more global optimization problem.  The relation of the mass to the geometry of the three-manifold via the isoperimetric problem arises in the work of Bray \cite{br:th} (cf. \cite{bm:ic}, \cite{reu, ipfrw}) on the Penrose Inequality, and more recently on an isoperimetric approach to the definition of mass of isolated systems due to Huisken \cite{hu:miso2}. 

We now have three notions of center: $c$ from the flux integrals defining global charges (\ref{eq:cm}), $C_I$ from (\ref{eq:lancm}), and $C_{HY}$ from the geometric foliation by CMC spheres near infinity.  We now discuss the relation between these. 

\subsubsection{Equivalence of centers}
We first state the Corvino-Wu result, which establishes, in the special case where the metric is sufficiently harmonically flat near infinity, that the center of mass from the ADM formulation agrees with the geometric center $C_{HY}$, a question raised by X. Zhang \cite{z:pmt} (and others).

\begin{Theorem}\label{main1} Consider an asymptotically flat three-manifold with an exterior region that admits an asymptotically flat chart in which $g_{ij}(x)=(1+\frac{m}{2|x|}+\frac{ \beta^1 x^1+\beta^2 x^2 +\beta^3 x^3}{|x|^3})^4\; \delta_{ij}+O(|x|^{-3})$, with $m>0$.  Then in this chart, $C_{HY}^k=\frac{2\beta^k}{m}=c^k$. \end{Theorem}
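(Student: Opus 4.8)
The plan is to prove two equalities: the elementary identity $c^k = 2\beta^k/m$, and the geometric identity $C_{HY}^k = 2\beta^k/m$, which is the real content. For the first, I would substitute $g_{ij} = u_0^4\delta_{ij} + O(|x|^{-3})$, where $u_0 = 1 + \tfrac{m}{2|x|} + \tfrac{\beta\cdot x}{|x|^3}$ is harmonic (so the conformally flat part has vanishing scalar curvature and $R(g) = O(|x|^{-5})$ on the end), directly into the ADM center-of-mass flux integral \eqref{eq:cm}: the monopole term $\tfrac{m}{2|x|}$ yields an odd integrand over the coordinate sphere and so contributes nothing in the limit, the $O(|x|^{-3})$ error and its derivatives of size $O(|x|^{-4})$ decay fast enough to drop out, and the dipole term contributes $32\pi\beta^k$, so that $16\pi\, m c^k = 32\pi\beta^k$, i.e.\ $c^k = 2\beta^k/m$. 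This is essentially the computation already carried out in Section~\ref{sec:gc} for harmonically flat ends, and the $O(|x|^{-3})$ correction changes nothing.

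For the geometric center, I would invoke Ye's description of the foliation, Theorem~\ref{ye} (for $m > 0$ this foliation agrees with the Huisken--Yau foliation of Theorem~\ref{hy}, so it computes the same $C_{HY}$): write the leaf $M^\sigma$ as $\Phi_\rho(\mathbb{S}^2)$ with $\Phi_\rho(\omega) = \rho\big(\tau(\rho) + \omega + \varphi(\rho,\omega)\,\omega\big)$, where $\tfrac{2}{\rho} - \tfrac{4m}{\rho^2} = H_\sigma$, $\|\varphi(\rho,\cdot)\|_{C^{2,\alpha}(\mathbb{S}^2)} \le C/\rho$, $|\tau(\rho)| \le C/\rho$, and --- as is built into the construction in order to factor out the kernel of the linearized constant-mean-curvature operator --- $\varphi(\rho,\cdot)$ is $L^2(\mathbb{S}^2)$-orthogonal to the first spherical harmonics. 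Expanding the Euclidean area element as $d\mu_e = \rho^2(1 + O(\rho^{-1}))\,d\omega$, using $\int_{\mathbb{S}^2}\omega\,d\omega = 0$, and using $\varphi \perp \{x^1, x^2, x^3\}$ to kill the first-moment contribution of $\varphi$, one gets $\int_{M^\sigma} d\mu_e = 4\pi\rho^2(1 + O(\rho^{-1}))$ and $\int_{M^\sigma} x\,d\mu_e = 4\pi\rho^3\,\tau(\rho) + O(\rho)$, whence
\[
C_{HY} = \lim_{\sigma\to\infty} \frac{\int_{M^\sigma} x\,d\mu_e}{\int_{M^\sigma} d\mu_e} = \lim_{\rho\to\infty}\rho\,\tau(\rho).
\]
So everything reduces to extracting the leading coefficient of $\tau(\rho)$.

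To do that, I would expand the constant-mean-curvature equation $H_g[\Phi_\rho] \equiv H_\sigma$ on $\mathbb{S}^2$ to order $\rho^{-3}$. Using the conformal transformation law $H_{u_0^4 g_E} = u_0^{-2}H_{g_E} + 4u_0^{-3}\,\partial_{\nu_e}u_0$ on the Euclidean sphere $S_\rho(\rho\tau)$, together with the expansions of $u_0$, $\nabla u_0$, and $|x| = \rho(1 + \tau\cdot\omega + O(\rho^{-2}))$, the mean curvature of the base sphere is $\tfrac{2}{\rho} - \tfrac{4m}{\rho^2}$ plus a genuine $\ell{=}1$ contribution of the form $\rho^{-3}\big(a\,m\,\rho\,\tau(\rho)\cdot\omega - b\,\beta\cdot\omega\big)$ with universal constants $a, b > 0$, plus a spherically symmetric $O(\rho^{-3})$ term and an $O(\rho^{-4})$ remainder into which the whole contribution of the $O(|x|^{-3})$ metric error is absorbed. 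Adding the linearization $-\rho^{-1}(\Delta_{\mathbb{S}^2} + 2)\varphi$ of $H$ in the graph function: the $\ell{=}0$ part of the equation is consistent with the defining relation $\tfrac{2}{\rho} - \tfrac{4m}{\rho^2} = H_\sigma$ and determines the monopole of $\varphi$; the $\ell{=}1$ part at order $\rho^{-2}$ is vacuous, since $\Delta_{\mathbb{S}^2} + 2$ annihilates the first harmonics and the base mean curvature has no $\ell{=}1$ term at that order; and the $\ell{=}1$ part at order $\rho^{-3}$ reduces to $a\,m\,\rho\,\tau(\rho) = b\,\beta + O(\rho^{-1})$. Hence $\rho\,\tau(\rho) \to (b/a)\,\beta/m$, and a short bookkeeping of the constants gives $b/a = 2$, so $C_{HY}^k = 2\beta^k/m = c^k$, as claimed.

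The main obstacle is exactly this order-$\rho^{-3}$ bookkeeping: one must verify that at that order the $\ell{=}1$ component of the equation reduces \emph{exactly} to the balance between $\tau$ and $\beta$, with no interference from the graph function or from the metric error. This rests on three facts working together: the exact vanishing of $\Delta_{\mathbb{S}^2} + 2$ on the first spherical harmonics (so the $\ell{=}1$ part of $\varphi$, which is taken to be zero, contributes nothing even one order further out); the normalization $\varphi \perp \{x^1, x^2, x^3\}$, which prevents quadratic cross-terms between the $\ell{=}0$ and $\ell{=}1$ modes of $\varphi$ from reappearing; and the decay hypothesis $g_{ij} - (1 + \tfrac{m}{2|x|} + \tfrac{\beta\cdot x}{|x|^3})^4\delta_{ij} = O(|x|^{-3})$, which is one power sharper than the Schwarzschild and dipole terms require and is precisely what pushes the error in $H$ down to $O(\rho^{-4})$, leaving the center untouched. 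An alternative route, closer to the way Corvino and Wu organize the argument, replaces the direct expansion by the Pohozaev identity \eqref{eq:poho}: since $R(g) = O(|x|^{-5})$ on the end, $\int_{M^\sigma}\Ricc(g)(X_{(\ell)},\nu)\,d\mu_g$ is independent of $\sigma$ up to $o(1)$ and equals $16\pi\, m c^\ell$ by the flux computation recalled around \eqref{eq:lancm}; evaluating the same integral from the CMC geometry of $M^\sigma$ produces $16\pi\, m\, C_{HY}^\ell + o(1)$, giving the equality once more.
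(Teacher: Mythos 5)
Your proposal is correct but proceeds along a genuinely different route from the paper's. The paper's sketch works entirely inside the Huisken--Yau \emph{flow}: after reducing by translation to the case $\beta=0$, one argues that the well-centered coordinate spheres $\{|x|=\sigma\}$ are already approximate CMC surfaces to one higher order, and the heart of the argument is the improved decay estimate $\max_{M^\sigma_t}|H-h|\leq C\sigma^{-7/2}e^{-5\sigma^{-3}t/2}$ (Proposition~\ref{mint}), which is then integrated through the evolution equations $\partial_t F^\sigma=(h-H)\nu$ and $\partial_t(d\mu_g)=H(H-h)\,d\mu_g$ to show the spheres barely drift and the limiting surface has zero Euclidean center of mass. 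Your argument instead uses Ye's implicit-function-theorem parametrization (Theorem~\ref{ye}), keeps $\beta\neq 0$, and reads the center off directly as $\lim\rho\,\tau(\rho)$ by matching the $\ell=1$ mode of the CMC equation at order $\rho^{-3}$ — with the Lyapunov--Schmidt orthogonality $\varphi\perp\{x^1,x^2,x^3\}$ and the exact vanishing of $\Delta_{\mathbb S^2}+2$ on first harmonics doing the work the flow estimate does in the paper. The structure of that balance is correct: carrying out the expansion of $H_{u_0^4 g_E}=u_0^{-2}H_e+4u_0^{-3}\partial_{\nu_e}u_0$ on $S_\rho(\rho\tau)$ gives the $\ell=1$ coefficient $\tfrac{6m}{\rho^3}(\rho\tau)\cdot\omega-\tfrac{12}{\rho^3}\beta\cdot\omega$, confirming $b/a=2$, which you asserted but did not show. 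Both approaches must control the same sources of error (the graph function and the $O(|x|^{-3})$ metric tail); the flow approach does this via a dynamical decay estimate, while yours isolates the statement as a clean spherical-harmonic balance in the limiting elliptic equation, which makes the role of each asymptotic term more transparent. The brief remark on the Pohozaev route is plausible in spirit but is much sketchier than your main argument — in particular the step identifying $\int_{M^\sigma}(\Ricc-\tfrac12 Rg)(X_{(\ell)},\nu)\,d\mu_g$ with $16\pi m\,C_{HY}^\ell$ from the CMC geometry of the leaf would itself need the same kind of $\ell=1$ bookkeeping and is not a shortcut.
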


\begin{proof}[Sketch of the proof] By translation, it suffices to prove that in case $g_{ij}(x)=(1+\frac{m}{2|x|})^4\delta_{ij}+O(|x|^{-3})$ with $m>0$, then $C_{HY}=0$ in this chart. The metric $g_{ij}(x)=(1+\frac{m}{2|x|})^4\delta_{ij}+O(|x|^{-3})$ agrees with Schwarzschild in this chart to one higher order in $|x|^{-1}$ than is generally considered in \cite{hy:cm}.  Hence, the radial coordinate spheres $F^{\sigma}_0(\mathbb{S}^2)=\{|x|=\sigma\}$ in this chart are better approximate solutions of the CMC equation; we have arranged this by design by centering them appropriately.  It suffices, then, to show that the flow moves them sufficiently little so that the center of mass determined by the foliation is the zero vector.  To prove Theorem \ref{main1}, then, we need to estimate carefully how much $F^{\sigma}_t$ varies from $F^{\sigma}_0$, and to keep track of the variation in the induced surface measure on $M^{\sigma}_t$.  Both of these quantities evolve by an equation involving $(H-h)$, as $\partial_t F^{\sigma}= (h-H)\nu$, and $\partial_t (d\mu_g)=H(H-h) d\mu_g$.  Basic estimates from \cite{hy:cm} yield $|\nabla H|^2\leq C \sigma^{-8}$, which can be integrated to prove $|H-h|\leq C \sigma^{-3}$.  This is what one expects for a finite center of mass.  The goal, then, is to show $|H-h|$ has better decay when we choose coordinates for which $g_{ij}=(1+\frac{m}{2|x|})^4\delta_{ij}+O(|x|^{-3})$.  Since we are starting the flow well-centered, we expect the spheres to drift very little, in fact on average not at all.  Indeed, we have the following proposition from \cite{cw:cm}.   

\begin{Proposition} There is a $C>0$ so that for all $\sigma\geq \sigma_0$, and for all $t\geq 0$, 
\begin{eqnarray}
\max\limits_{M^{\sigma}_t} |H-h| \leq C \sigma^{-7/2} e^{-5\sigma^{-3}t/2}.
\end{eqnarray} \label{mint}
\end{Proposition}
We can conclude the theorem by inserting this into the definition of $C_{HY}$, and by using the flow and the initial conditions to estimate $F^{\sigma}_{\infty}$.  \end{proof}

In two recent papers, L.-H. Huang has definitively unified and extended the above notions of center of mass to the general setting of Regge-Teitelboim asymptotics (\ref{rt}), which we denote ``AF-RT."  In \cite{lan:cm}, she shows that $c=C_{I}$, and in the case of strong enough asymptotics for which the Huisken-Yau proof yields a foliation, that $c=C_I=C_{HY}$.  In \cite{lan:cmc} she proceeds obtain a foliation near infinity by CMC spheres under AF-RT asymptotics, essentially unique for $m\neq 0$, and which is stable for $m>0$.  The center of mass of the foliation again agrees with $C_I$.   We state the main existence theorem from \cite{lan:cmc}. 

\begin{Theorem} Assume $(M,g,K)$ is an AF-RT end with decay rate $q\in (\tfrac{1}{2}, 1]$ and center of mass $c$. If $m\neq 0$, then for $R$ sufficiently large, there exist surfaces $\Sigma_R$ of constant mean curvature $H_{\Sigma_R}=\frac{2}{R}+O(R^{-1-q})$.  $\Sigma_R$ is the graph of a function $\psi_0$ over the coordinate sphere $S_R(c)=\{x:|x-c|=R\}$, i.e. $\Sigma_R=\{ x+\psi_0(x)\nu_g: x\in S_R(c)\}$.   If $\psi_0^*(x)=\psi_0(c+Rx)$, then $\|\psi^*_0\|_{C^{2,\alpha}}+R\|(\psi_0^*)^{\text{odd}}\|_{C^{2,\alpha}}\leq c_0 R^{1-q}$.  Thus $c$ is the geometric center of the family $\Sigma_R$.  If $m>0$ the surfaces $\Sigma_R$ form a strictly stable foliation. \end{Theorem}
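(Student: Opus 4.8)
The natural strategy is a Lyapunov--Schmidt reduction for the constant mean curvature operator, in the spirit of Ye's argument (Theorem~\ref{ye}) but with the center of the approximating coordinate sphere kept as an auxiliary parameter, and with the three-dimensional space of infinitesimal translations playing the role of the kernel. Fix a candidate center $a\in\mathbb R^3$, write a nearby hypersurface as the normal graph $\{x+\psi(x)\nu_g(x):x\in S_R(a)\}$, and rescale to the unit sphere via $u(\omega)=R^{-1}\psi(a+R\omega)$. A computation of the mean curvature $H$ using the AF-RT expansion of $g$ with rate $q$ gives $R\,H = 2 + \mathcal L u + \mathcal N_{R,a}(u)$, where $\mathcal L = -\Delta_{\mathbb S^2}-2$ is the rescaled round Jacobi operator, $\Delta_{\mathbb S^2}$ is the Laplacian of the unit metric, and $\mathcal N_{R,a}$ collects the terms quadratic or higher in $u$ together with the contributions of $g-g_E$; one has $\mathcal N_{R,a}(0)=O(R^{-q})$ in $C^{0,\alpha}(\mathbb S^2)$ uniformly for $a$ in a bounded set, and the Regge--Teitelboim parity condition (\ref{rt}) forces the \emph{odd} part of $\mathcal N_{R,a}(0)$ to be smaller by one power of $R$. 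The kernel of $\mathcal L$ is the $\ell=1$ eigenspace, i.e.\ the restrictions of linear functions, which is exactly the tangent space to the family of translates of $S_R(a)$.

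The plan then proceeds in three steps. (i) For each large $R$ and each $a$ near $c$, solve the projected equation $P_{\ell\ge 2}(RH)=0$ for $u=u(R,a)$ by the implicit function theorem; this is legitimate because $\mathcal L$ is an isomorphism on $P_{\ell\ge2}C^{2,\alpha}(\mathbb S^2)$ with inverse bounded \emph{independently of} $R$, and $\mathcal N_{R,a}$ is a small, uniformly controlled perturbation, yielding $\|u(R,a)\|_{C^{2,\alpha}}=O(R^{-q})$. (ii) Feed $u(R,a)$ back to form the finite-dimensional obstruction $\mathcal E(R,a):=P_{\ell=1}\bigl(RH(u(R,a))\bigr)\in\mathbb R^3$, which must vanish for $RH$ to be constant. (iii) Solve $\mathcal E(R,a)=0$ for $a=a(R)$ close to $c$. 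Since $\mathcal L$ annihilates the $\ell=1$ modes, $\mathcal E$ is to leading order the $\ell=1$ projection of the mean-curvature error: the Schwarzschild part of $g$ contributes a term proportional to $m\,(a-c)$ (this is where the mass enters decisively), while the remaining AF-RT error is controlled in $C^1_a$ using the parity condition (\ref{rt}) and the flux definition (\ref{eq:cm}) of $c$, which is precisely the $\ell=1$ moment that error would otherwise carry. Because $m\neq0$, the map $a\mapsto\mathcal E(R,a)$ is a small $C^1$-perturbation of a linear isomorphism, hence has a unique zero $a(R)$ with $|a(R)-c|=O(R^{-q})$; re-expressing the resulting surface $\Sigma_R$ as a graph over $S_R(c)$ and unwinding the rescaling gives $\|\psi_0^*\|_{C^{2,\alpha}}\le c_0 R^{1-q}$ and, using the parity of the forcing together with $|a(R)-c|=O(R^{-q})$, the improved bound $R\|(\psi_0^*)^{\mathrm{odd}}\|_{C^{2,\alpha}}\le c_0R^{1-q}$.

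I expect step (iii) under the weak decay hypothesis to be the main obstacle: one must show that the genuinely AF-RT (non-Schwarzschild) part of $\mathcal E(R,a)$, and especially its derivative in $a$, is small compared with the $mR^{-2}(a-c)$ term, so that the implicit function theorem applies with the center landing within $O(R^{-q})$ of $c$. This rests on a careful parity analysis --- the $\ell=1$ spherical harmonics are odd, so only the odd part of $g-g_E$ can feed the obstruction, and (\ref{rt}) together with the choice of $c$ are exactly what make that contribution subleading --- and it is the place where the hypotheses $m\neq0$ and (\ref{rt}) are genuinely used; the positive-mass case (stability) and the merely nonzero-mass case (existence/uniqueness) are already distinguished here in the behavior of the relevant eigenvalue.

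Finally, two further points complete the picture. For the foliation claim, one differentiates the construction in $R$: the implicit function theorem gives $u(R,\cdot)$ and $a(R)$ depending smoothly on $R$, and an estimate that the surfaces move outward (the normal speed $\partial_R$ of $\Sigma_R$ is positive for $R$ large, since $\Sigma_R$ is $C^1$-close to $S_R(c)$, which foliates) shows the $\Sigma_R$ are pairwise disjoint and sweep out a neighborhood of infinity; the ``geometric center'' statement is then immediate from $|a(R)-c|\to0$ and the smallness of the odd part of $\psi_0$. For strict stability when $m>0$, one examines the Jacobi operator $-\Delta_{\Sigma_R}-(\|A\|^2+\Ricc_g(\nu,\nu))$ on $\Sigma_R$, which after rescaling is an $O(R^{-q})$-perturbation of $\tfrac1{R^2}(-\Delta_{\mathbb S^2}-2)$ plus a Schwarzschild correction: its eigenvalues on the $\ell\ge2$ modes are bounded below by $cR^{-2}>0$, while the would-be zero eigenvalue on the $\ell=1$ modes is pushed to order $mR^{-3}$ and is positive precisely because $m>0$. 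Since the volume-preserving stability inequality tests only functions $f$ on $\Sigma_R$ with $\int_{\Sigma_R}f\,d\mu_g=0$, which have no $\ell=0$ component to leading order, the second variation is strictly positive, giving strict stability of the foliation.
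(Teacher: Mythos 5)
Be aware the survey does not give a proof of this statement; it is quoted from Huang \cite{lan:cmc}, and the text immediately following says ``We do not go into the proof,'' noting only that the argument ``involves very beautiful applications of elliptic theory'' and that a key technical ingredient is an extension of the density theorem for harmonic asymptotics (building on \cite{cs:ak} and \cite{lan:cm}), whereby AF-RT data can be approximated by data for which the odd part of $g$ and the even part of $K$ are arbitrarily small, \emph{with control on all ten global charges including $c$}. Your Lyapunov--Schmidt scheme over $S_R(a)$, with the $\ell=1$ modes as the kernel and the center $a$ as the Lagrange parameter, is exactly the architecture of Ye \cite{ye:cm} and is what Huang uses as well, so the framework is correct. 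The genuine difference from the route indicated in the paper is that you attempt to close step (iii) by a \emph{direct} parity/moment analysis on the raw AF-RT end, whereas Huang's density reduction is designed precisely to avoid that: one proves the estimates for data with near-harmonic (hence very small odd-part) asymptotics, where the $\ell=1$ moments of $H$ are explicit, and then transfers them by approximation and continuity. That detour is what buys you control at the borderline decay rate $q$ near $\tfrac12$.

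Two concrete points to tighten. First, your claim $|a(R)-c|=O(R^{-q})$ is exactly what the theorem's odd-part bound $\|(\psi_0^*)^{\mathrm{odd}}\|_{C^{2,\alpha}}\le c_0R^{-q}$ requires, but as stated it does not follow from the ingredients you give: a direct implicit function theorem count gives $|a(R)-c|\lesssim |\mathcal E(R,c)|/|\partial_a\mathcal E|$, and with $\mathcal E(R,c)=P_{\ell=1}\bigl(RH(u(R,c))\bigr)$ of size only $O(R^{-1-q})$ (the odd part of $g-g_E$ contributes $O(R^{-2-q})$ to $H$) and $\partial_a\mathcal E\sim mR^{-2}$, this yields $O(R^{1-q}/m)$, a full power of $R$ worse. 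You need an additional order of cancellation in $\mathcal E(R,c)$ traceable to the definition (\ref{eq:cm}) of $c$, which you should identify and prove; note that the flux integral in (\ref{eq:cm}) converges only because of cancellation between its two terms, so the required identification of $P_{\ell=1}(RH)$ with the ``tail'' of that integral (via an integration by parts on the annulus) is where the real work is, and it is exactly what the density reduction makes tractable. Second, both your stability argument and your foliation claim rest on the same $\ell=1$ eigenvalue shift $\sim m/R^3$ of the Jacobi operator; you should check that the AF-RT error terms perturbing that eigenvalue are \emph{uniformly} smaller, which again is not immediate at weak decay and is precisely what passing through near-harmonic asymptotics controls.
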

 
We do not go into the proof, which involves very beautiful applications of elliptic theory, but we do note a connection to the above asymptotics discussion. 
Huang establishes and uses an extension of the density theorem for harmonic asymptotics.  In \cite{cs:ak}, it is shown that solutions with these asymptotics are dense in the space of asymptotically flat solutions, with the energy-momentum four-vector approaching that of the original data.  In \cite{lan:cm}, Huang argues that if the data is AF-RT to start, then the odd part of $g$ and the even part of $K$ can also be made arbitrarily small, and thus so can the change in all global charges under the approximation, cf. \cite{hsw:ang}.  We note that Huang also obtains interesting uniqueness results which one may compare with \cite{hy:cm, qt:cm}.

We will return to the issue of the asymptotics in Section \ref{Asymptotic Gluing}. 

\section{The Conformal Method}
\label{ConfMet}

The most successful approach so far for systematically studying the existence
and uniqueness of solutions to   (\ref{eq:c1})-(\ref{eq:c3}) is
through the conformal method of Lichnerowicz~\cite{Lich44},
Choquet-Bruhat and York~\cite{CBY}. The idea is to introduce a
set of unconstrained \emph{conformal data}, which are freely
chosen, and find  $(\threeg,K)$ by solving a {\it determined} system of partial differential equations. In this section, we treat the general $n$-dimensional case, as the theory is the same in all dimensions $n\geq3$.  In the vacuum case
with vanishing cosmological constant~\cite{CBY}, the free
conformal data consist of an $n$-dimensional manifold $\hyp$, a Riemannian
metric $\tthreeg$ on $\hyp$ (with Levi-Civita connection $\tD$), a trace-free symmetric tensor 
$\tsigma$, and the \emph{mean curvature function} $\tau$. The
initial data $(\threeg,K)$ defined as
\begin{eqnarray}
\threeg & = & \phi^q \tthreeg \label{threegeq}\\
K & = & \phi^{-2} (\tsigma + \tcalD W) + \frac{\tau}{n}
 \phi^q \tthreeg \;, \label{Keq}
\end{eqnarray}
where  $q=\frac{4}{n-2}$ and $\phi$ is a positive function, will then solve \eq{eq:c1}-\eq{eq:c2}
if and only if $\phi$ and the vector field $W$
solve the equations
 \beq {\div}_{\tthreeg}(\tcalD
W + \tsigma)=\frac{n-1}{n}\phi^{q+2} \tD \tau
 \;,
 \label{vacmom:noncmc}
\eeq
\beq \label{vacham:noncmc} \laplaciano{\tthreeg} \phi -
\frac{1}{q(n-1)} R(\tthreeg)\phi + \frac{1}{q(n-1)}|\tsigma + \tcalD
W |^2_{\tthreeg} \phi^{-q-3}-\frac{1}{qn}\tau^2\phi^{q+1}=0\;,  \eeq
where $\tcalD $ is the \emph{conformal Killing operator}:
\beq \tcalD W_{ab} = \tD_a W_b + \tD_b W_a -
\frac{2}{n}\tthreeg_{ab} \tD_c W^c \;.
\label{LWeq}
 \eeq
 Vector fields $W$  annihilated by
$\tcalD$  are  \emph{conformal Killing vector fields}, and are
characterized by the fact that they generate (perhaps local)
conformal diffeomorphisms of $(\hyp, \threeg=\phi^q \tthreeg)$.  The semi-linear scalar
equation (\ref{vacham:noncmc}) is often referred to as the
\emph{Lichnerowicz equation}.

Equations (\ref{vacmom:noncmc})-(\ref{vacham:noncmc}) form a
determined system of equations for the scalar-vector pair $(\phi,
W)$. The operator $\div_{\tthreeg}(\tcalD \;\cdot)$ is a linear,
formally self-adjoint, elliptic operator on vector fields. What
makes the study of the system
(\ref{vacmom:noncmc})-(\ref{vacham:noncmc}) difficult in general is
the nonlinear coupling between the two equations.

The explicit choice of \eq{threegeq}-\eq{Keq} is motivated by the
two identities 
\beq R({\tilde \threeg}) = -\phi^{-q-1} (q(n-1)\Delta_\threeg \phi -
R({\threeg})\phi)
 \;,
  \label{ConfReq} \eeq
where $\tilde \threeg=\phi^{-q} \threeg$, and
\beq \tD^a (\phi^{-2} B_{ab}) = \phi^{-q-2} D^a
B_{ab} \label{confdiveq} \eeq
which holds for any trace-free
tensor $B$.  Equation (\ref{ConfReq}) is the well known identity
relating the scalar curvatures of two conformally related
metrics, and we note that $q=\frac{4}{n-2}$ is the unique exponent that
does not lead to supplementary $|D \phi|^2$ terms in
(\ref{ConfReq}).

In the space-time evolution $(\mathcal M^{n+1}, \fourg)$ of the initial
data set $(\hyp, \threeg,  K)$,
the function $\tau= \tr_\threeg K $ is the mean curvature of the hypersurface $\hyp\subset\mathcal M$.  The
assumption that the mean curvature function $\tau$ is
 constant on $\hyp$ significantly simplifies the analysis of the vacuum constraint
equations, because it decouples equations  (\ref{vacmom:noncmc}) and
(\ref{vacham:noncmc}). One can then attempt to solve
(\ref{vacmom:noncmc}) for  $W$, and then solve the Lichnerowicz
equation (\ref{vacham:noncmc}).

\subsection{Existence via the conformal method: CMC initial data}

Existence and uniqueness of solutions of this problem for constant
mean curvature (CMC) data  has been studied extensively. For
compact manifolds this was exhaustively analysed by
Isenberg~\cite{Jimconstraints}, building upon a large amount of
previous work~\cite{Lich44,OMYork73,York74,CBY}; the proof was
simplified  by Maxwell in~\cite{Maxwell:compact}. If we let
\beq
\label{YamInv}
{\mathcal Y}([\threeg])=\inf_{f\in C^{\infty}(M),\\ f\not\equiv 0}\frac{\int_M (|\nabla f|^2 + 
\frac{1}{q(n-1)} R(\threeg) f^2)dv_{\threeg}}{\|f\|^2_{L^{2^*}}},
\eeq
where $2^*=2n/(n-2)$, 
denote the Yamabe invariant of the
conformal class $[\threeg]$ of metrics determined by $\threeg$
(see~\cite{lp:yam}), the result reads as follows:
\begin{Theorem}[\cite{Jimconstraints}]
\label{TJimC}
 Consider a smooth conformal initial data set
$(\tthreeg, \tsigma, \tau)$ on a compact manifold $\hyp$, with
constant $\tau$. Then there always exists a solution $W$ of
\eq{vacmom:noncmc}. Setting $\sigma= \tcalD W + \tsigma$,  
 a positive solution $\phi$ of the Lichnerowicz equation exists if and only if one of the following conditions holds
 \begin{itemize}
 \item[1.] ${\mathcal Y}([\tthreeg])>0, \sigma \not\equiv 0$.
 \item[2.] ${\mathcal Y}([\tthreeg])=0, \sigma \not\equiv 0, \tau\not=0$.
 \item[3.] ${\mathcal Y}([\tthreeg])<0, \tau\not=0$.
 \item[4.] ${\mathcal Y}([\tthreeg])=0, \sigma \equiv 0, \tau=0$.
 \end{itemize}
 Moreover, we have uniqueness in every case except case 4 (where one may homothetically scale the metric to choose the volume arbitrarily).
\end{Theorem}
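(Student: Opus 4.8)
I would prove the statement in two stages: first the momentum constraint for $W$, then the existence/uniqueness dichotomy for $\phi$ in the Lichnerowicz equation. The CMC hypothesis $\tD\tau=0$ makes the right-hand side of~\eq{vacmom:noncmc} vanish, so the two equations decouple. For the momentum constraint one must solve $\div_{\tthreeg}(\tcalD W)=-\div_{\tthreeg}\tsigma$ on the closed manifold $M$. The operator $W\mapsto\div_{\tthreeg}(\tcalD W)$ is elliptic and formally self-adjoint — indeed $\int_M\langle\div_{\tthreeg}(\tcalD W),W\rangle\,dv_{\tthreeg}=-\tfrac{1}{2}\int_M|\tcalD W|^2_{\tthreeg}\,dv_{\tthreeg}$, so its kernel is precisely the space of conformal Killing fields of $\tthreeg$ — hence Fredholm of index zero. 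Solvability therefore reduces to checking that $\div_{\tthreeg}\tsigma$ is $L^2$-orthogonal to every conformal Killing field $Y$, and since $\tsigma$ is trace-free this is automatic: $\int_M\langle\div_{\tthreeg}\tsigma,Y\rangle\,dv_{\tthreeg}=-\tfrac{1}{2}\int_M\langle\tsigma,\tcalD Y\rangle\,dv_{\tthreeg}=0$. So $W$ always exists (unique up to a conformal Killing field, which leaves $\sigma=\tcalD W+\tsigma$ unchanged), and elliptic regularity makes it smooth.

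For the Lichnerowicz equation I would first normalize the conformal gauge. Whether a positive solution $\phi$ of~\eq{vacham:noncmc} exists is a conformally invariant question — this is why conditions 1--4 are phrased in terms of the sign of $\mathcal{Y}([\tthreeg])$ and of the geometric data $\sigma$ (the trace-free part of $K$, up to a conformal factor) and $\tau$ — so by the resolution of the Yamabe problem I may assume $R(\tthreeg)$ is a constant whose sign agrees with that of $\mathcal{Y}([\tthreeg])$. Writing the equation as $\Delta_{\tthreeg}\phi-aR(\tthreeg)\phi+a|\sigma|^2_{\tthreeg}\phi^{-q-3}-b\tau^2\phi^{q+1}=0$ with $a=\tfrac{1}{q(n-1)}>0$ and $b=\tfrac{1}{qn}>0$, I would solve it by sub- and super-solutions: an ordered pair $0<\phi_-\le\phi_+$ of a sub- and a super-solution yields, by monotone iteration (which is \emph{linear} at each step, so the critical Sobolev exponent $q+1=2^*-1$ poses no difficulty), a solution $\phi$ with $\phi_-\le\phi\le\phi_+$.

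The core of the proof is then a case-by-case construction of these barriers. A large constant $\phi_+\equiv M$ is a super-solution whenever $\tau\ne0$ (the term $-b\tau^2M^{q+1}$ dominates for $M$ large) and also when $\mathcal{Y}([\tthreeg])>0$ with $\tau=0$ (then $-aR\,M$ dominates), which handles the super-solution in cases 1, 2 and 3. For the sub-solution: in case 3, where $R<0$, a small constant $\phi_-\equiv\epsilon$ works because the favorable term $-aR\,\epsilon>0$ beats $b\tau^2\epsilon^{q+1}$ for $\epsilon$ small; in case 1, where $R$ is a positive constant and $-\Delta_{\tthreeg}+aR$ is therefore invertible with positivity-preserving inverse, I would take $\phi_-=\epsilon\,(-\Delta_{\tthreeg}+aR)^{-1}\!\big(|\sigma|^2_{\tthreeg}-c_0\big)$ with $c_0>0$ fixed small (so that $\phi_->0$, using $\sigma\not\equiv0$), and then $\epsilon$ small; in case 2, where $R\equiv0$, I would take $\phi_-=\epsilon\big(1+\delta\,(-\Delta_{\tthreeg})^{-1}(|\sigma|^2_{\tthreeg}-s_0)\big)$, where $s_0>0$ is the $\tthreeg$-average of $|\sigma|^2_{\tthreeg}$ (positive since $\sigma\not\equiv0$), with $\delta$ small and then $\epsilon$ small. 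Case 4 is immediate: there $R\equiv0$, $\sigma\equiv0$ and $\tau=0$, so the equation reduces to $\Delta_{\tthreeg}\phi=0$ on a closed manifold, forcing $\phi$ to be an arbitrary positive constant — whence both existence and the failure of uniqueness (the constant just rescales $\threeg=\phi^q\tthreeg$ homothetically).

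It then remains to rule out a positive solution when none of 1--4 holds, and to prove uniqueness in cases 1--3. Non-existence in each remaining case follows by integrating the equation over $M$ (so the Laplacian drops out) or by the maximum principle: e.g. if $\mathcal{Y}>0$ and $\sigma\equiv0$ then $\Delta_{\tthreeg}\phi=aR\phi+b\tau^2\phi^{q+1}>0$, impossible on a closed manifold; if $\mathcal{Y}=0$, $\sigma\not\equiv0$, $\tau=0$, then $\int_M\Delta_{\tthreeg}\phi\,dv_{\tthreeg}=0$ while $\int_M a|\sigma|^2_{\tthreeg}\phi^{-q-3}\,dv_{\tthreeg}>0$; the case $\mathcal{Y}<0$, $\tau=0$ is similar. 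For uniqueness in cases 1--3, one observes that $\phi\mapsto\big(aR\phi-a|\sigma|^2_{\tthreeg}\phi^{-q-3}+b\tau^2\phi^{q+1}\big)/\phi$ is strictly increasing in $\phi>0$ wherever $\sigma\ne0$, and everywhere if $\tau\ne0$; a Picone-type identity applied to two positive solutions then forces them to coincide. The step I expect to be the main obstacle is the construction of a positive \emph{sub}-solution in the cases $\mathcal{Y}([\tthreeg])\ge0$ (cases 1 and 2): a naive small-constant sub-solution fails precisely on the zero set of $\sigma$, where the favorable $|\sigma|^2_{\tthreeg}\phi^{-q-3}$ term degenerates and cannot absorb the bad term $-b\tau^2\phi^{q+1}$. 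The linear-elliptic choices above work because that term carries the factor $|\sigma|^2_{\tthreeg}$, which vanishes exactly where the naive estimate breaks, and because $\sigma\not\equiv0$ permits solving the auxiliary equation with a source that is sign-definite on average.
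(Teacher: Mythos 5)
Your overall strategy — decouple the momentum constraint using $\tD\tau=0$, normalize the conformal gauge, and then construct ordered sub/super-solution pairs for the Lichnerowicz equation case by case — matches the standard proof of Theorem~\ref{TJimC}, and your barrier constructions (small constant in case~3; a linear-elliptic solution sourced by $|\sigma|^2_{\tthreeg}$ minus a small shift in cases~1 and~2, to handle the zero set of $\sigma$) are the right idea. Your Fredholm treatment of the momentum constraint and your non-existence checks by integration are also correct.

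The one place you diverge from the route this paper is sketching is the conformal normalization. You invoke \emph{the full solution of the Yamabe problem} to put $R(\tthreeg)$ equal to a constant of the right sign. This is precisely what Isenberg's original proof did, and it works — but the paper points out (following Maxwell~\cite{Maxwell:compact}) that this is overkill. What the sub/super-solution argument actually needs is only that $R(\tthreeg)$ has a \emph{definite sign} pointwise, not that it be constant. And indeed, if you go back through your own barrier estimates you will find you never use constancy of $R$: in case~3, $-aR\,\epsilon\ge c\,\epsilon>0$ as soon as $R<0$ pointwise on the compact $M$; in case~1, $(-\Delta_{\tthreeg}+aR)^{-1}$ is still positivity-preserving for any pointwise-positive $R$; and in case~2, $R\equiv 0$ is what you want anyway. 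The ability to conformally deform to a sign-definite scalar curvature is exactly Proposition~\ref{tfae}, which is a first-eigenvalue computation for the conformal Laplacian — far more elementary than the positive-mass-theorem machinery behind the full Yamabe resolution. So your argument is correct but invokes a much deeper theorem than necessary; Maxwell's observation buys an essentially self-contained proof (and extends to lower regularity, which is why the paper emphasizes it). One further small point: your uniqueness sketch via monotonicity of $N(\phi)/\phi$ and a Picone-type identity is sound, but as stated it needs a maximum-principle or sliding step to handle the set where $\sigma$ vanishes in case~1 with $\tau=0$ (where the monotonicity is not strict); the strong maximum principle applied to the quotient $\phi_1/\phi_2$ fills that gap.
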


Beyond a simple existence result, we see that Theorem  \ref{TJimC} does more. It actually solves 
the {\it Conformal Parametrization Problem} for CMC initial data on a compact manifold. In particular, given a smooth, compact Riemannian manifold $(\hyp, \tthreeg)$, it provides a full parametrization of the set of vacuum solutions $(\threeg, K)$ to the Einstein constraint equations such that $\threeg$ lies in the conformal class $[\tthreeg]$ of $\tthreeg$.  This parametrization is given by the choice of a constant $\tau$ and a trace-free symmetric tensor $\tsigma$ such that one of the four conditions of  Theorem  \ref{TJimC} are satisfied (note that 
$\sigma$ vanishes if and only if  $\tsigma$ vanishes).  The extent to which the Conformal Parametrization Problem may be solved for  non-CMC initial data is unclear, and is currently actively being explored, as we discuss below.

\begin{proof}[Remarks on the proof of Theorem \ref{TJimC}]
It is worth commenting on the role of the Yamabe invariant in this result.  Indeed the Lichnerowicz equation bears a great deal of similarity to the Yamabe equation specifying the existence of a conformal metric $h=\phi^q\tthreeg$ of constant salar curvature $R(h)=C$
\beq \label{Yamabe} 
\laplaciano{\tthreeg} \phi -
\frac{1}{q(n-1)} R(\tthreeg)\phi + \frac{1}{q(n-1)}C\phi^{q+1}=0\;. \eeq
There are two key features which distinguish the two equations: one is the existence of the term involving $\phi^{-q-3}$ in the Lichnerowicz equation and the other is the sign of the term involving $\phi^{q+1}$.  These distinctions are crucial.  Indeed the solution of the Yamabe problem \cite{Schoen:Yamabe, lp:yam} was one of Richard Schoen's significant contributions to geometric analysis and a very important part of late twentieth century mathematics.  The original proof of Theorem \ref{TJimC} used the solution of the Yamabe problem in full.  Maxwell showed in~\cite{Maxwell:compact} that the proof in fact relies only on the existence of a metric of within a given conformal class whose scalar curvature has the same sign as that of the Yamabe invariant  ${\mathcal Y}([\threeg])$.  This far simpler fact is expressed in the following well known and very useful result.
\begin{Proposition}\label{tfae}  Let $L_{\tthreeg}=-\laplaciano{\tthreeg} +
\frac{1}{q(n-1)} R(\tthreeg)$ be the conformal Laplacian, and let 
$\mu_{\tthreeg}$ be its first eigenvalue.  Then the following are equivalent.
\begin{itemize}
 \item[1.] ${\mathcal Y}([\tthreeg])>0$ (respectively $=0$ or $<0$).
 \item[2.] $\mu_{\tthreeg} >0$ (respectively $=0$ or $<0$).
 \item[3.] There exists a positive function $u$ on $M$ such that the scalar curvature of $u^q\tthreeg$ satisfies $R(u^q\tthreeg)>0$  (respectively $=0$ or $<0$) everywhere on $M$.
 \end{itemize}
\end{Proposition}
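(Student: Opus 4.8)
The plan is to prove Proposition \ref{tfae} by establishing the chain of equivalences $(2)\Leftrightarrow(1)$, then $(2)\Leftrightarrow(3)$, since the eigenvalue $\mu_{\tthreeg}$ provides the cleanest bridge between the variational invariant $\mathcal Y([\tthreeg])$ and the pointwise sign of scalar curvature. First I would recall the variational characterization of the first eigenvalue of the conformal Laplacian, namely $\mu_{\tthreeg}=\inf\{\int_M(|\nabla f|^2+\frac{1}{q(n-1)}R(\tthreeg)f^2)\,dv_{\tthreeg} : \|f\|_{L^2}=1\}$, and note that on a compact manifold this infimum is attained by a smooth eigenfunction $u$ which, by the strong maximum principle applied to $L_{\tthreeg}u=\mu_{\tthreeg}u$, can be taken strictly positive. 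The implication $(1)\Leftrightarrow(2)$ is then immediate: both $\mathcal Y([\tthreeg])$ and $\mu_{\tthreeg}$ are infima of the same numerator $Q(f):=\int_M(|\nabla f|^2+\frac{1}{q(n-1)}R(\tthreeg)f^2)\,dv_{\tthreeg}$ over nonzero $f$, merely normalized by $\|f\|_{L^{2^*}}$ versus $\|f\|_{L^2}$; since these are positive norms, the sign of the infimum is unchanged — if $Q$ is nonnegative on all of $H^1$ then both infima are $\geq 0$, and if $Q(f_0)<0$ for some $f_0$ then both infima are $<0$. One should remark that the vanishing case requires a word: $\mathcal Y=0$ means $Q\geq 0$ with the Sobolev-normalized infimum equal to zero, which forces $Q\geq 0$ but $Q(f_0)=0$ for some nonzero $f_0$, equivalently $\mu_{\tthreeg}=0$.

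Next I would prove $(2)\Rightarrow(3)$, which is the constructive heart of the statement. Given the positive first eigenfunction $u$ with $L_{\tthreeg}u=\mu_{\tthreeg}u$, i.e. $-\Delta_{\tthreeg}u+\frac{1}{q(n-1)}R(\tthreeg)u=\mu_{\tthreeg}u$, I apply the conformal transformation law \eqref{ConfReq} (with the roles of the metrics arranged so that $\hat g=u^q\tthreeg$): a direct substitution gives $R(u^q\tthreeg)=u^{-q-1}\big(-q(n-1)\Delta_{\tthreeg}u+R(\tthreeg)u\big)=u^{-q-1}\cdot q(n-1)\cdot\mu_{\tthreeg}u= q(n-1)\,\mu_{\tthreeg}\,u^{-q}$. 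Since $u>0$ everywhere and $q(n-1)>0$, the scalar curvature of $u^q\tthreeg$ has the same sign as $\mu_{\tthreeg}$, pointwise and everywhere. This simultaneously handles all three cases ($>0$, $=0$, $<0$).

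For the converse $(3)\Rightarrow(2)$ I would argue by conformal invariance of the relevant spectral sign. If $R(u^q\tthreeg)>0$ everywhere for some positive $u$, then the conformal Laplacian $L_{u^q\tthreeg}$ has strictly positive potential and no first-order terms, so its lowest eigenvalue is positive; and the sign of the first eigenvalue of the conformal Laplacian is a conformal invariant — this follows because $L_{\tthreeg}$ and $L_{u^q\tthreeg}$ are related by the intertwining identity $L_{u^q\tthreeg}(\psi)=u^{-q-1}L_{\tthreeg}(u\psi)$, so $v$ is a positive eigenfunction of one with eigenvalue $\lambda$ iff $uv$ is a positive eigenfunction of the other with eigenvalue $\lambda$ (after renormalizing the volume form appropriately, or more cleanly: the quadratic form $Q$ transforms covariantly so the sign of its infimum is conformally invariant). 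Hence $\mu_{\tthreeg}>0$, and analogously for the $=0$ and $<0$ cases. An alternative, slicker route for this direction is to observe that $(3)$ for one representative plus the computation in the previous paragraph shows the sign is forced to match $\mu_{\tthreeg}$, so there is really only one consistent sign.

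The main obstacle is not any single estimate but rather bookkeeping the three trichotomous cases uniformly and being careful with the vanishing case, where "the infimum is zero" must be correctly translated into "$\mu_{\tthreeg}=0$ and the form is nonnegative"; one must ensure the eigenfunction $u$ is genuinely positive (not merely nonnegative) via the strong maximum principle / Harnack inequality on the connected compact manifold, and one must invoke the conformal covariance of $L$ cleanly enough that $(3)\Rightarrow(2)$ does not secretly re-derive the Yamabe problem. The compactness of $M$ is used in two essential places — existence and regularity of the minimizing eigenfunction, and positivity of the $L^{2^*}$ and $L^2$ norms of test functions — and I would flag that this is exactly why the argument is "far simpler" than solving the Yamabe problem: no sharp Sobolev constant, no concentration-compactness, just linear elliptic eigenvalue theory plus the conformal transformation identity \eqref{ConfReq}.
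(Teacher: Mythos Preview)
Your proof is correct and follows the standard route; the paper itself does not prove Proposition~\ref{tfae} at all, merely stating it as ``well known and very useful'' and citing Maxwell~\cite{Maxwell:compact} for a low-regularity version. So there is nothing to compare against, and your sketch supplies exactly what the paper omits.

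Two small points worth tightening. In $(1)\Leftrightarrow(2)$, the passage from $\mu_{\tthreeg}>0$ to $\mathcal Y([\tthreeg])>0$ deserves one extra line: positivity of $Q$ on nonzero $f$ is clear, but to rule out $\mathcal Y=0$ you should note that a minimizing sequence with $\|f_n\|_{L^{2^*}}=1$ and $Q(f_n)\to 0$ would have $\|f_n\|_{L^2}\to 0$ (from $\mu>0$) and $\|\nabla f_n\|_{L^2}\to 0$ (from the form), contradicting the Sobolev embedding. In $(3)\Rightarrow(2)$, your parenthetical is the right fix: the claim that eigenfunctions correspond ``with eigenvalue $\lambda$'' is not literally true, since the $L^2$ inner products for $\tthreeg$ and $u^q\tthreeg$ differ by the weight $u^{2^*}$; the clean argument is precisely the one you give afterward, namely that $Q_{u^q\tthreeg}(\psi)=Q_{\tthreeg}(u\psi)$ (using $dv_{u^q\tthreeg}=u^{2^*}dv_{\tthreeg}$ and the intertwining identity), so the \emph{sign} of the quadratic form, and hence of the first eigenvalue, is conformally invariant even though the eigenvalues themselves are not.
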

Maxwell established this result in a low regularity setting in~\cite{Maxwell:compact} in order to carry out a program for analysing the conformal method for solving the constraint  equations for metrics of low
differentiability (see also \cite{Choquet-Bruhat:safari}).  This
was motivated in part by work  on the evolution problem for
\emph{rough initial data}~\cite{KlainermanRodnianski:r1,KlainermanRodnianski:r2,KlainermanRodnianski:r3,SmithTataru:sharp}.
Judicious applications of this result, together with the maximum principle, allow one to 
prove Theorem \ref{TJimC} using the method of sub and super-solutions.
\end{proof}

The conformal method easily extends to the CMC constraint
equations for many non-vacuum initial data sets, e.g. the
Einstein-Maxwell system~\cite{Jimconstraints}, where one obtains
results very similar to those of Theorem~\ref{TJimC} (see also \cite{IMaxP}). However,
other important examples, such as the Einstein-scalar field
system~\cite{CBIP1,CBIP2,CBIP-Leray,HPP} are not as well understood.

\subsection{Existence via the conformal method: near-CMC and far-from-CMC initial data}

Conformal data which is close to being CMC  (e.g. as measured by a smallness
assumption on $|\nabla\tau|/|\tau|$) are usually referred to as
``near-CMC."  Classes of near-CMC conformal data solutions have
been constructed~\cite{IM,CBIM91,ICA07,VinceJim:noncmc}.  
In particular, with an appropriate definition of near-CMC, it is know that on a Riemannian manifold with no conformal Killing fields, the conditions of Theorem \ref{TJimC} yield a unique solution of the constraint equations provided that the mean curvature is nowhere vanishing.
There is at least one example of a non-existence theorem
\cite{IsenbergOMurchadha} for a class of near-CMC conformal
data. 
However, due to the nonlinear coupling in the system
(\ref{vacmom:noncmc})-(\ref{vacham:noncmc}),  the question of
existence for unrestricted choices of the mean curvature $\tau$
appears to be significantly more difficult, and until recently
all results assumed strong restrictions on the gradient of
$\tau$. 

Alan Rendall has demonstrated that on $\mathbb S^2\times \mathbb S^1$ endowed with the product metric, the data consisting of $\tsigma\equiv 0$ and $\tau$ equal to an (arbitrary) odd function on $\mathbb S^1$ has no solution which shares the symmetry of the conformal data.  Therefore either there are no solutions, or there are more than one solution. 

The first general existence result in the ``far-from-CMC" context  is due to
Holst, Nagy, and Tsogtgerel~\cite{HNT07,HNT08}.  In the Yamabe positive (${\mathcal Y}([\tthreeg])>0$) case, they construct solutions with freely specified mean curvature; however they assume both the presence
of (sufficiently weak) matter fields and that $\tsigma$ is pointwise sufficiently small (depending on $\tau$) and not identically zero.  One may view the smallness assumption on $\tsigma$ as standing in for the near-CMC hypothesis in previous work. In~\cite{Maxwell08}, Maxwell provides a sufficient
condition, with no restrictions on the mean curvature, for the
conformal method to generate solutions to the vacuum constraint
equations on compact manifolds.  As an application, Maxwell
demonstrates the existence of a large class of solutions to the
\emph{vacuum} constraint equations with freely specified mean
curvature (again assuming that $\tsigma$  is pointwise sufficiently small and not identically zero).
In the results of Holst, Nagy, and Tsogtgerel, as well as those of Maxwell, the methods employed do not allow one to assert the uniqueness of the solutions found.  
Nonetheless, these results together represent a significant
advance in our understanding of how the conformal method may be
used to generate solutions of the vacuum constraint equations.
However the existence question for generic classes of large
conformal data remains wide open. 

In order to explore the ``large data" regime more systematically, Maxwell has recently studied
a model problem in the Yamabe null (${\mathcal Y}([\tthreeg])=0$) class \cite{Maxwell09}.
By considering a three-parameter family of model conformal data that allow for simultaneous violations of both the near-CMC and small-$\tsigma$ conditions, Maxwell was able to identify a number of new phenomena.  First, he confirmed that for this class of data there is also a small-$\tsigma$ result.  He was also able to assert the non-existence of solutions when the data violated both the near-CMC and small-$\tsigma$ conditions.  Moreover, he was able to demonstrate that the small-$\tsigma$ solutions are not unique.  These results indicate that the landscape for studying the constraint equations via the conformal method will  be quite interesting when one moves away from small data as represented by either the near-CMC or small-$\tsigma$ conditions.  It seems likely that in order to effectively analyse this situation, a new approach to studying the constraint equations may be needed.

In~\cite{BartnikIsenberg} the reader will find a presentation
of alternative approaches to constructing solutions of the
constraints, covering work done up to 2003.

\subsection{The constraint equations on asymptotically
flat manifolds}
 \label{ssceafm} 
There are a large number of
well-established results concerning the existence of CMC and
near-CMC solutions of the Einstein constraint equations on
asymptotically flat manifolds \cite{Cantor77, ChIY, CBY,
CBChristodoulou, YCB:GRbook, Maxwell:AH, Maxwell:rough} using the conformal method. 
In 1977, Cantor introduced a quantity, analogous to the Yamabe invariant, which was  sufficient  to prove the existence of a positive solution to the Lichnerowicz
equation relative to a given set of asymptotically flat CMC conformal data~\cite{Cantor77}. Since in the asymptotically flat setting CMC means that $\tau=0$, the relevant Lichnerowicz equation is simply
\beq \label{afLich} \laplaciano{\threeg} \phi -
\frac{1}{q(n-1)} R(\threeg)\phi + \frac{1}{q(n-1)}|\sigma|^2_{\threeg} \phi^{-q-3}=0\;. 
\eeq
Maxwell~\cite{Maxwell:AH} uncovered an error in
Cantor's definition of the invariant~\cite{Cantor77} and
provided the correct definition.  For an asymptotically flat manifold $(M, \threeg)$, we define
\beq
{\mathcal Y}_{AF}([\threeg])=\inf_{f\in C^{\infty}_c(M),\\ f\not\equiv 0}\frac{\int_M (|\nabla f|^2 + 
\frac{1}{q(n-1)} R(\threeg) f^2)dv_{\threeg}}{\|f\|^2_{L^{2^*}}},
\eeq
(compare with (\ref{YamInv})).
The precise result is then the following. 
\begin{theorem} Suppose that $(M, \threeg)$ is asymptotically flat of class $W^{k, 2}_{-\tau}$
for $k>n/2$ and $\tau\in(0, n-2)$ and that $\sigma\in W^{k-1, 2}_{-\tau-1}$ is a transverse-traceless tensor.  Then there exists a positive conformal factor $\phi$ satisfying 
(\ref{afLich}) if and only if ${\mathcal Y}_{AF}([\threeg])>0$. Moreover, if a solution exists, then it is unique.  
\end{theorem}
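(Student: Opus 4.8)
The plan is to characterize the positivity of $\mathcal{Y}_{AF}([\threeg])$ through the conformal Laplacian $L_{\threeg}=-\Delta_{\threeg}+\frac{1}{q(n-1)}R(\threeg)$ acting on weighted Sobolev spaces, use this to reduce to the case $R(\threeg)\equiv 0$, solve (\ref{afLich}) by the method of sub- and super-solutions, and deduce uniqueness from the monotonicity of the nonlinearity. Throughout I would lean on the weighted elliptic theory sketched earlier: $\Delta_{\threeg}$, and hence any compact perturbation of it such as $L_{\threeg}$, is Fredholm of index zero and in fact an isomorphism $W^{j,2}_{-\delta}\to W^{j-2,2}_{-\delta-2}$ for $\delta$ in the nonexceptional range $(0,n-2)$, together with the conformal covariance identities (\ref{ConfReq}) and (\ref{confdiveq}). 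The key structural step is the asymptotically flat analogue of Proposition \ref{tfae}: for $(M,\threeg)$ of the stated class, $\mathcal{Y}_{AF}([\threeg])>0$ is equivalent to the injectivity of $L_{\threeg}$ on $W^{k,2}_{-\tau}$, hence (by index zero) to $L_{\threeg}$ being an isomorphism, hence to the existence of a positive $u\to 1$ at infinity with $L_{\threeg}u=0$, i.e. $R(u^{q}\threeg)\equiv 0$. Injectivity follows by integrating $u\,L_{\threeg}u=0$ against $u$: the boundary terms vanish by the decay of $u$, and $\int_{M}(|\nabla u|^{2}+\frac{1}{q(n-1)}R(\threeg)u^{2})\,dv_{\threeg}=0$ forces $u\equiv 0$ when $\mathcal{Y}_{AF}>0$; conversely, solving $L_{\threeg}v=-\frac{1}{q(n-1)}R(\threeg)\in W^{k-2,2}_{-\tau-2}$ and setting $u=1+v$ produces the conformal factor, with $u>0$ coming from the maximum principle (equivalently, from positivity of the Green's function of the positive operator $L_{\threeg}$). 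Since the transverse-traceless property and weighted regularity of $\sigma$ are conformally covariant (with $\sigma\mapsto u^{-2}\sigma$, by (\ref{confdiveq})), I would replace $\threeg$ by $u^{q}\threeg$ and assume from now on that $R(\threeg)\equiv 0$, so that (\ref{afLich}) reads $-\Delta_{\threeg}\phi=\frac{1}{q(n-1)}|\sigma|^{2}_{\threeg}\,\phi^{-q-3}$.

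For the existence direction I would build ordered sub- and super-solutions tending to $1$ at infinity. The constant $\phi_{-}\equiv 1$ is a sub-solution, since $-\Delta_{\threeg}1=0\le\frac{1}{q(n-1)}|\sigma|^{2}_{\threeg}$. For a super-solution, let $w\in W^{k,2}_{-\tau}$ be the decaying solution of $-\Delta_{\threeg}w=\frac{1}{q(n-1)}|\sigma|^{2}_{\threeg}$, which exists because the right-hand side is nonnegative, decays faster than $|x|^{-2}$, and lies in a weighted space on which the Laplacian is an isomorphism; the maximum principle gives $w\ge 0$, so $\phi_{+}:=1+w\ge 1$ and, using $(1+w)^{-q-3}\le 1$, we get $-\Delta_{\threeg}\phi_{+}=\frac{1}{q(n-1)}|\sigma|^{2}_{\threeg}\ge\frac{1}{q(n-1)}|\sigma|^{2}_{\threeg}\,\phi_{+}^{-q-3}$, so $\phi_{+}$ is a super-solution with $\phi_{-}\le\phi_{+}$. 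A standard monotone iteration between $\phi_{-}$ and $\phi_{+}$ — introducing a linear term $+K\phi$ with $K$ a sufficiently large constant so that $t\mapsto Kt+\frac{1}{q(n-1)}|\sigma|^{2}_{\threeg}t^{-q-3}$ is increasing on $[1,\sup\phi_{+}]$, and solving on weighted spaces, as in the compact CMC case — remains trapped in $[\phi_{-},\phi_{+}]$ and converges to a solution $\phi$ with $1\le\phi\le 1+w$; thus $\phi-1\in W^{k,2}_{-\tau}$, and weighted elliptic regularity places $\phi$ in the asserted class.

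For the converse, suppose $\phi>0$ solves (\ref{afLich}) with $\phi\to 1$. Multiplying the equation by $q(n-1)$ gives $-q(n-1)\Delta_{\threeg}\phi+R(\threeg)\phi=|\sigma|^{2}_{\threeg}\phi^{-q-3}$, and feeding this into the scalar curvature transformation law under $\threeg\mapsto\phi^{q}\threeg$ (the counterpart of (\ref{ConfReq})) yields $R(\phi^{q}\threeg)=|\sigma|^{2}_{\threeg}\,\phi^{-2q-4}\ge 0$. If $\sigma\not\equiv 0$ this is not identically zero, so integrating by parts shows the conformal Laplacian of $\phi^{q}\threeg$ is injective on $W^{k,2}_{-\tau}$, whence $\mathcal{Y}_{AF}([\phi^{q}\threeg])>0$; if $\sigma\equiv 0$ then $R(\phi^{q}\threeg)\equiv 0$ and $-\Delta_{\phi^{q}\threeg}$ is visibly injective on decaying functions. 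In either case, since positivity of $\mathcal{Y}_{AF}$ is a conformally invariant condition, $\mathcal{Y}_{AF}([\threeg])=\mathcal{Y}_{AF}([\phi^{q}\threeg])>0$. For uniqueness, assume $\phi_{1},\phi_{2}>0$ both solve (\ref{afLich}) with $\phi_{i}\to 1$; having already arranged $R(\threeg)\equiv 0$, set $w=\phi_{1}-\phi_{2}$, so $w\to 0$ and $-\Delta_{\threeg}w=c(x)w$ with $c(x)=\frac{1}{q(n-1)}|\sigma|^{2}_{\threeg}\,\frac{\phi_{1}^{-q-3}-\phi_{2}^{-q-3}}{\phi_{1}-\phi_{2}}\le 0$, since $t\mapsto t^{-q-3}$ is decreasing on $(0,\infty)$. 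Because $w\to 0$ and $c\le 0$, the maximum principle for $\Delta_{\threeg}+c$ forces $w\le 0$ on $M$, and symmetrically $w\equiv 0$; this \emph{defocusing} sign of the $\phi^{-q-3}$ term is precisely what makes uniqueness hold here, in contrast to the compact Lichnerowicz equation.

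The main obstacle is the first step: isolating the correct invariant $\mathcal{Y}_{AF}$ (exactly the point at which Cantor's original definition needed repair) and proving its positivity equivalent to injectivity, hence invertibility, of $L_{\threeg}$ on the weighted spaces $W^{k,2}_{-\tau}$, together with the positivity-preserving property of $L_{\threeg}^{-1}$ used in the conformal reduction to $R(\threeg)\equiv 0$. Once that equivalence and the supporting weighted elliptic package are in hand, the sub/super-solution construction and the maximum-principle uniqueness argument are essentially routine.
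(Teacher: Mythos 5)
Your proof follows essentially the same route the paper indicates: reduce, via the conformal covariance of $L_{\threeg}$ and the asymptotically-flat analogue of Proposition~\ref{tfae}, to the scalar-flat case, then run a sub/super-solution argument for existence and a monotonicity argument for uniqueness. The paper itself does not supply a detailed proof but points at exactly this strategy (Maxwell's), so the comparison is with what the paper sketches, and your argument fills that sketch in faithfully: the Fredholm-plus-injectivity route to invertibility of $L_{\threeg}$, the conformal reduction to $R(\threeg)\equiv 0$ using the covariance of $\sigma$ and $L_{\threeg}$, the barrier pair $\phi_{-}=1$ and $\phi_{+}=1+w$, and the defocusing sign of $t\mapsto t^{-q-3}$ forcing uniqueness are all the right ingredients, and the passage from uniqueness in the scalar-flat representative back to the original metric via the bijection $\phi\leftrightarrow\phi/u$ is handled correctly.

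The one place you should tighten is the positivity of $u=1+v$ after solving $L_{\threeg}u=0$, $u\to 1$. In the asymptotically flat setting this does not come from the ordinary maximum principle alone, because $R(\threeg)$ has no sign: at a negative interior minimum $p$ of $u$ one gets $R(p)u(p)\ge 0$, which is no contradiction if $R(p)\le 0$. The invocation of ``positivity of the Green's function'' presupposes exactly what you need to prove. The clean argument is the variational one you already used for injectivity: pair $L_{\threeg}u=0$ against $u^{-}=\max(-u,0)$, integrate by parts (the decay of $u^{-}$ kills boundary terms), and obtain $\int_{M}\bigl(|\nabla u^{-}|^{2}+\tfrac{1}{q(n-1)}R(\threeg)(u^{-})^{2}\bigr)\,dv_{\threeg}=0$, which with $\mathcal{Y}_{AF}([\threeg])>0$ and the density of $C^{\infty}_{c}$ in the relevant weighted space forces $u^{-}\equiv 0$, hence $u\ge 0$; the strong maximum principle (or Harnack) then upgrades this to $u>0$ since $u\not\equiv 0$. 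With that repair, the proof is complete and matches the intended approach.
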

The key step to establishing this is to have an appropriate analog of Proposition \ref{tfae}.  This is given by the following result.
\begin{Proposition}Suppose that $(M, \threeg)$ is asymptotically flat of class $W^{k, 2}_{-\tau}$
for $k>n/2$ and $\tau\in(0, n-2)$.  Then the following are equivalent.
\begin{itemize}
\item[1.] There exists a conformal factor $\phi>0$ such that $1-\phi\in W^{k, 2}_{-\tau}(M)$ and such that $\phi^{q}\threeg$ is scalar flat.
\item[2.] ${\mathcal Y}_{AF}([\threeg])>0$.
\item[3.] For each $\eta\in [0,1]$, ${\mathcal P} _{\eta} = - \laplaciano{\threeg} + \eta
\frac{1}{q(n-1)} R(\threeg)$ is an isomorphism acting on $W^{k, 2}_{-\tau}(M)$.
\end{itemize}
\end{Proposition}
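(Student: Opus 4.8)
The plan is to prove the equivalences by running the cycle $(3)\Rightarrow(1)\Rightarrow(2)\Rightarrow(3)$. Two structural facts underpin all three steps. First, by the weighted elliptic theory for the Laplacian on asymptotically flat manifolds (the $n$-dimensional analog of the Proposition above, whose exceptional weights satisfy $(-\delta)\in\{m\in\mathbb{Z}:m\le 2-n\text{ or }m\ge 0\}$), the operator $-\Delta_{\threeg}\colon W^{k,2}_{-\tau}\to W^{k-2,2}_{-\tau-2}$ is an isomorphism for $\tau\in(0,n-2)$, and more generally ${\mathcal P}_\eta$ is Fredholm of index zero for such $\tau$: its zeroth-order term, which decays at infinity since $R(\threeg)\in W^{k-2,2}_{-\tau-2}$, is a lower-order perturbation that changes neither the Fredholm property nor the index. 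Hence ${\mathcal P}_\eta$ is an isomorphism precisely when $\ker{\mathcal P}_\eta=\{0\}$. Second, the conformal transformation law of scalar curvature (cf.\ \eqref{ConfReq}) reads $R(\phi^q\threeg)=q(n-1)\,\phi^{-q-1}\,{\mathcal P}_1\phi$, so $\phi^q\threeg$ is scalar flat if and only if ${\mathcal P}_1\phi=0$.

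For $(3)\Rightarrow(1)$ I would run a continuity argument in $\eta$; this is precisely where the full strength of (3) --- invertibility of ${\mathcal P}_\eta$ for \emph{all} $\eta\in[0,1]$, not just $\eta=1$ --- is used. For each $\eta$, invertibility lets us uniquely solve ${\mathcal P}_\eta\psi_\eta=-\tfrac{\eta}{q(n-1)}R(\threeg)$ in $W^{k,2}_{-\tau}$, and then $\phi_\eta:=1+\psi_\eta$ satisfies ${\mathcal P}_\eta\phi_\eta=0$ with $\phi_\eta-1\in W^{k,2}_{-\tau}$; since $k>n/2$, the weighted Sobolev embedding gives $\phi_\eta\to 1$ at infinity and continuity of $\eta\mapsto\phi_\eta$ into $C^0(M)$. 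At $\eta=0$ invertibility of $-\Delta_{\threeg}$ forces $\psi_0\equiv 0$, so $\phi_0\equiv 1>0$. The set $\{\eta\in[0,1]:\phi_\eta>0\text{ on }M\}$ is open (if $\phi_\eta>0$ then $\inf_M\phi_\eta>0$, as $\phi_\eta\to 1$, and nearby $\phi_{\eta'}$ remain positive by $C^0$-continuity) and closed (a limit $\phi_{\eta_*}$ is $\ge 0$, not identically $0$, and solves ${\mathcal P}_{\eta_*}\phi_{\eta_*}=0$, so the strong maximum principle / Harnack inequality forces $\phi_{\eta_*}>0$ on the connected manifold $M$). Hence $\phi:=\phi_1>0$, ${\mathcal P}_1\phi=0$, $\phi-1\in W^{k,2}_{-\tau}$, i.e.\ $\phi^q\threeg$ is scalar flat.

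For $(1)\Rightarrow(2)$ I would pass to $h=\phi^q\threeg$, which is asymptotically flat (because $\phi-1\in W^{k,2}_{-\tau}$) and scalar flat. The substitution $u=\phi f$ is a bijection preserving compact support, and conformal covariance of the conformal Laplacian gives $\int_M(|\nabla f|_h^2+\tfrac{R(h)}{q(n-1)}f^2)\,dv_h=\int_M(|\nabla u|_{\threeg}^2+\tfrac{R(\threeg)}{q(n-1)}u^2)\,dv_{\threeg}$ together with $\|f\|_{L^{2^*}(dv_h)}=\|u\|_{L^{2^*}(dv_{\threeg})}$, so ${\mathcal Y}_{AF}([\threeg])={\mathcal Y}_{AF}([h])$. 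Since $R(h)\equiv 0$, ${\mathcal Y}_{AF}([h])$ is the sharp Sobolev constant of $(M,h)$, which is positive because the Sobolev inequality holds on any asymptotically flat manifold. Thus ${\mathcal Y}_{AF}([\threeg])>0$.

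For $(2)\Rightarrow(3)$ it remains, by the first paragraph, to show $\ker{\mathcal P}_\eta=\{0\}$ for each $\eta\in[0,1]$. If ${\mathcal P}_\eta w=0$ with $w\in W^{k,2}_{-\tau}$, then $-\Delta_{\threeg}w$ decays strictly faster than $w$, and iterating the weighted elliptic isomorphism improves the decay of $w$ until $w\in W^{k,2}_{-\delta}$ for every $\delta<n-2$; in particular $w\in L^{2^*}$ and $\nabla w\in L^2$. By density of $C^\infty_c(M)$ and the decay of $R(\threeg)$, the coercivity ${\mathcal Y}_{AF}([\threeg])>0$ passes to the limit to give $\int_M(|\nabla w|_{\threeg}^2+\tfrac{R(\threeg)}{q(n-1)}w^2)\,dv_{\threeg}\ge{\mathcal Y}_{AF}([\threeg])\,\|w\|_{L^{2^*}}^2\ge 0$. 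On the other hand, integrating by parts (permissible since $w$ decays),
\[
0=\int_M w\,{\mathcal P}_\eta w\,dv_{\threeg}=(1-\eta)\int_M|\nabla w|_{\threeg}^2\,dv_{\threeg}+\eta\int_M\Big(|\nabla w|_{\threeg}^2+\tfrac{R(\threeg)}{q(n-1)}w^2\Big)\,dv_{\threeg}
\]
is a sum of nonnegative terms. If $\eta<1$ this forces $\nabla w\equiv 0$, hence $w\equiv 0$ since $w\to 0$ at infinity; if $\eta=1$ it forces $\|w\|_{L^{2^*}}=0$, again $w\equiv 0$. This closes the cycle. I expect the main obstacle to be this last implication: establishing the improved decay of kernel elements via the weighted bootstrap, and making the density argument rigorous so that the $L^{2^*}$-coercivity built into ${\mathcal Y}_{AF}>0$ may be applied to the non-compactly supported $w$; the positivity step inside $(3)\Rightarrow(1)$ is the other delicate point, and the design of the statement --- quantifying over all $\eta\in[0,1]$ --- is exactly what makes the continuity argument there succeed.
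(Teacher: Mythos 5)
The paper states this proposition without proof, attributing it to Maxwell \cite{Maxwell:AH}, so there is no internal argument to compare against; your proposal must be judged on its own merits. Your argument is correct and is essentially the route Maxwell follows: run the cycle $(3)\Rightarrow(1)\Rightarrow(2)\Rightarrow(3)$ using the continuity method in $\eta$, the conformal covariance of the conformal Laplacian, and coercivity forcing the kernel of ${\mathcal P}_\eta$ to vanish. The structural observations you lead with are the right ones: $-\Delta_{\threeg}\colon W^{k,2}_{-\tau}\to W^{k-2,2}_{-\tau-2}$ is an isomorphism on $\tau\in(0,n-2)$ since the exceptional weights are avoided, and since $R(\threeg)\in W^{k-2,2}_{-\tau-2}$ decays, multiplication by it is a relatively compact perturbation, so ${\mathcal P}_\eta$ is Fredholm of index zero and one only needs to kill its kernel; and scalar flatness of $\phi^q\threeg$ is the linear equation ${\mathcal P}_1\phi=0$.

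The two points you single out as delicate are indeed where the real work is, and your mechanisms for closing them are sound. In $(2)\Rightarrow(3)$, the bootstrap using $-\Delta w=-\eta\tfrac{R}{q(n-1)}w$ together with the weighted product estimates pushes any kernel element into $W^{k,2}_{-\delta}$ for every $\delta<n-2$; since $(n-2)/2<n-2$, this is more than enough to make the cutoff integration-by-parts argument and the limiting $L^{2^*}$-coercivity rigorous, and your splitting of $\int w\,{\mathcal P}_\eta w$ into the two nonnegative pieces $(1-\eta)\int|\nabla w|^2$ and $\eta\int(|\nabla w|^2+\tfrac{R}{q(n-1)}w^2)$ cleanly handles all $\eta\in[0,1]$ at once. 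In $(3)\Rightarrow(1)$, the open-and-closed argument for $\{\eta:\phi_\eta>0\}$ is the standard continuity scheme; the Sobolev embedding $W^{k,2}_{-\tau}\hookrightarrow C^0$ (valid since $k>n/2$, $\tau>0$) gives $C^0$-continuity of $\eta\mapsto\phi_\eta$ and $\phi_\eta\to1$ at infinity, openness follows from the resulting positive lower bound, and closedness from the strong maximum principle (or Harnack) applied on the connected manifold $M$. The implication $(1)\Rightarrow(2)$ via conformal invariance of ${\mathcal Y}_{AF}$ and the Sobolev inequality on the scalar-flat asymptotically flat metric $h=\phi^q\threeg$ is correct. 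In short, the proof is sound, and you have correctly located the technical pivots.
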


\section{Gluing Constructions}
 
Gluing constructions, by which known solutions of a geometric partial differential equation are combined to produce new solutions, are now ubiquitous in geometric analysis.  One of the earliest gluing results concerns scalar curvature.  In 1979, Schoen and Yau \cite{sy:manu} showed that, within the category of compact $n$-dimensional manifolds, the property of admitting a metric with strictly positive scalar curvature is preserved under surgeries of codimension $k\geq 3$.  
In particular, this includes taking connected sums, which can be viewed as codimension $n$ surgery.
(Gromov and Lawson \cite{GL1, GL2}  independently established these results, introducing other important techniques into the study of manifolds with positive scalar curvature.)  
 Over the past ten years, there have been many applications of gluing constructions to general relativity.  All of these take place at the level of the constraint equations and have implications for space-times by considering the evolution of the initial data sets constructed by gluing.  

\subsection{Asymptotic gluing} \label{Asymptotic Gluing}
It was an interesting open question for many years to what extent the asymptotic expansion at infinity determines the interior behavior of a solution of the (vacuum) constraint equations, cf. \cite[p. 371]{sy:dg}, \cite{ba:op}.  The simplest such question was answered in the resolution of the Positive Mass Theorem: if near infinity the solution is precisely Euclidean, then the solution is globally flat.  A natural next question to pose, then, is whether an asymptotically flat metric of vanishing scalar curvature, which near infinity agrees \emph{precisely} with a standard time-symmetric slice of a Schwarzschild solution, must be globally Schwarzschild.  The question is to some extent about unique continuation under the constraint of vanishing scalar curvature, inspired possibly in part due to the success of the conformal method discussed above, which turns the constraints, which are \emph{undetermined}-elliptic, into a determined elliptic system.  We remark that it was known that unique continuation does not hold with respect to the interior, as Bartnik \cite{ba:QS} constructed non-trivial scalar-flat metrics on $\mathbb R^3$ with a flat sub-domain.  We also note that, as we recalled above in Proposition \ref{br:sa}, Bray showed that there are plenty of metrics which are Schwarzschild near infinity and have non-negative scalar curvature, and in particular, Cutler and Wald \cite{cw:em} showed that there exist solutions of the Einstein-Maxwell constraints on $\mathbb R^3$ which near infinity agree with a Schwarzschild metric.  We do remark, however, that the Schwarzschild metric satisfies a rigidity condition, with respect to the \emph{Penrose inequality} \cite{br:pen, hi:pen}.  

In the late nineties, Corvino and Schoen resolved the question in a very strong form, as we discuss below.  The main idea which echoes in their work is that there is a lot of freedom in the initial data, and one ought to look outside the space of conformal deformations.  The idea that took hold was a localized version of the Fischer-Marsden results \cite{fm:def, fm:defcon}, the possibility for which was in part inspired from the work of Lohkamp \cite{loh:ham}. 
It was shown that not only do there exist many initial data sets for the vacuum constraints which agree with Schwarzschild (time-symmetric) or Kerr near infinity, but furthermore such solutions are \emph{dense} in an appropriate topology. 
 
We now state the time-symmetric result from \cite{cor:schw}. 

\begin{Theorem}  Let $(M,g)$ be an asymptotically flat three-manifold with zero scalar curvature.  Let $E\subset M$ be any asymptotically flat end, and let $E_{r_0}$ be an exterior region in $E$ corresponding to $\{ x: |x| > r_0\}$ in asymptotically flat coordinates.  Let $k$ be a non-negative integer.  Then for any $\epsilon>0$, there is an $R>0$ and a (smooth) metric $\bar{g}$ with zero scalar curvature and $\|g-\bar{g}\|_{C^k(E)}<\epsilon$ (the norm is taken with respect to the Euclidean metric in the asymptotically flat coordinate chart), so that $\bar{g}$ is equal to $g$ on $M\setminus E_{R}$, and $\bar{g}$ is identical to an asymptotically flat end of a standard Schwarzschild slice on $E_{2R}$.  The analogous statement holds for $n\geq 3$.  \label{thm:schw}
\end{Theorem}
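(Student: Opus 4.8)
The plan is to produce $\bar g$ by a \emph{localized, non-conformal} deformation, following \cite{cor:schw}; one really must leave the class of conformal changes here, since a conformal (hence unique-continuation-governed) deformation of a scalar-flat metric cannot be made to agree \emph{exactly} with Schwarzschild on an open set unless it was Schwarzschild already. Fix an asymptotically flat chart on $E$ and, for large $R$, set $\Omega_R=\{R<|x|<2R\}$. Choose Schwarzschild parameters, a mass $m>0$ and a translation vector $c$, and write $g_{S,m,c}(x)=\big(1+\tfrac{m}{2|x-c|}\big)^4 g_E$ (in dimension $n$, replace this by $(1+\tfrac{m}{2|x-c|^{n-2}})^{4/(n-2)}g_E$). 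With a cutoff $\chi_R$ equal to $1$ on $\{|x|\le R\}$ and $0$ on $\{|x|\ge 2R\}$, $|\partial^j\chi_R|=O(R^{-j})$, form the naive interpolation $\hat g=\chi_R\,g+(1-\chi_R)\,g_{S,m,c}$. Then $\hat g=g$ on $M\setminus E_R$ and $\hat g=g_{S,m,c}$ on $E_{2R}$, and since $R(g)=R(g_{S,m,c})=0$ the scalar curvature $R(\hat g)$ is supported in $\Omega_R$ and is small there (it decays as $R\to\infty$, since $g$ and $g_{S,m,c}$ both converge to $g_E$ on $\Omega_R$ in $C^k$). It then suffices to find a symmetric $2$-tensor $h$, supported in $\Omega_R$ and vanishing to infinite order on $\partial\Omega_R$, with $R(\hat g+h)=0$: the metric $\bar g:=\hat g+h$ will be smooth, scalar flat, equal to $g$ on $M\setminus E_R$, equal to a Schwarzschild slice on $E_{2R}$, and $\|g-\bar g\|_{C^k(E)}\to 0$ as $R\to\infty$.

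I would construct $h$ by the \emph{adjoint method}. Write $R(\hat g+h)=R(\hat g)+L_{\hat g}h+Q_{\hat g}(h)$, where $L_{\hat g}h=-\Delta_{\hat g}(\mathrm{tr}_{\hat g}h)+\mathrm{div}_{\hat g}\mathrm{div}_{\hat g}h-\langle h,\Ricc_{\hat g}\rangle$ is the (underdetermined elliptic) linearized scalar curvature operator and $Q_{\hat g}$ is the quadratic remainder. Its formal $L^2$ adjoint is $L^*_{\hat g}f=-(\Delta_{\hat g}f)\hat g+\Hess_{\hat g}f-f\,\Ricc_{\hat g}$, whose kernel (the ``static potentials'') on $(\R^n,g_E)$ is exactly the space of affine functions $\mathcal K_0=\mathrm{span}\{1,x^1,\dots,x^n\}$, of dimension $n+1$. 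Since $\hat g$ is $C^k$-close to $g_E$ on $\Omega_R$, $L^*_{\hat g}$ has there an $(n+1)$-dimensional \emph{approximate kernel} $\mathcal K$. Fix a positive weight $w$ on $\Omega_R$ vanishing to infinite order at $\partial\Omega_R$ and look for $h$ of the form $h=w\,L^*_{\hat g}f$; the first step is to solve the projected equation
\[
\Pi\big(R(\hat g)+L_{\hat g}(w\,L^*_{\hat g}f)+Q_{\hat g}(w\,L^*_{\hat g}f)\big)=0,
\]
where $\Pi$ projects off $\mathcal K$ in a suitable weighted $L^2$. The operator $f\mapsto \Pi L_{\hat g}(w\,L^*_{\hat g}f)$ is fourth order, elliptic, and formally self-adjoint in the weighted pairing; rescaling $\Omega_R$ to the fixed annulus $\{1<|y|<2\}$, where $\hat g$ is a small perturbation of $g_E$, one shows it is an isomorphism of the $\mathcal K$-complements with bounds \emph{uniform in $R$}. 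A contraction mapping argument (using that $R(\hat g)$ and $Q_{\hat g}$ are small) then yields a unique small solution $f=f_{m,c}$, hence $h=h_{m,c}$, with $\|h_{m,c}\|_{C^k}\to 0$ as $R\to\infty$.

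It remains to remove the residual component in $\mathcal K$. Writing $\{f_0,\dots,f_n\}$ for a basis of $\mathcal K$ with $f_0\approx 1$ and $f_j\approx x^j$, one has $R(\bar g)=\sum_{j=0}^n\lambda_j(m,c)\,f_j$ for scalars $\lambda_j$, and we must arrange $\Psi(m,c):=(\lambda_0,\dots,\lambda_n)=0$. The key structural input is that, by the defining property of the formal adjoint, $\int_{\Omega_R}(L_{\hat g}h)f\,dv_{\hat g}-\int_{\Omega_R}h\cdot L^*_{\hat g}f\,dv_{\hat g}$ is a boundary integral over $\partial\Omega_R$; for $f\in\mathcal K$ this boundary integral computes, to leading order, the mismatch in ADM mass and center of mass between the inner data $g$ and the outer data $g_{S,m,c}$, e.g. $\int_{\Omega_R}R(\hat g)\cdot 1\sim c_n\big(m-m(g)\big)$, while the moments $\int_{\Omega_R}R(\hat g)\cdot x^k$ detect the center-of-mass mismatch. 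Hence $\Psi$ is a small perturbation of an invertible affine map in $(m,c)$, and the inverse function theorem (or a Brouwer-degree argument) produces a zero $(m^*,c^*)$, necessarily close to the mass and center of mass of $g$ in this chart. Taking $\bar g=\hat g+h_{m^*,c^*}$ completes the construction; the $n\ge 3$ case is identical, the $(n+1)$-dimensional obstruction being matched by the mass together with the $n$ translation parameters of the $n$-dimensional Schwarzschild family.

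I expect the main obstacle to be the uniform invertibility in the second step: one must choose the boundary weight $w$ so that simultaneously (i) $h=w\,L^*_{\hat g}f$ vanishes to infinite order at $\partial\Omega_R$ (so $\bar g$ joins $g$ and $g_{S,m,c}$ smoothly), and (ii) after rescaling $\Omega_R$ to unit size the fourth-order operator $\Pi L_{\hat g}(w\,L^*_{\hat g}\,\cdot)\,\Pi$ admits a priori estimates independent of $R$ which still dominate the (decaying) source $R(\hat g)$. This means controlling $L^*_{\hat g}$ near its genuine $(n+1)$-dimensional flat-space kernel while tracking how the estimate degenerates as the weight degenerates at the boundary, and it is the analytic heart of the argument; the finite-dimensional adjustment via the mass/center obstruction in the third step is, by comparison, soft.
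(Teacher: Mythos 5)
Your proposal reproduces the paper's proof strategy essentially verbatim: cutoff gluing of $g$ to a Schwarzschild end in an annulus $\Omega_R$, a localized correction of the form $h = w\,L^*_{\hat g}f$ supported in $\Omega_R$ solved modulo the $(n+1)$-dimensional approximate cokernel $\mathcal K_0$ via weighted overdetermined-elliptic estimates and a Newton/contraction iteration, and a finite-dimensional degree (or inverse-function) argument over the Schwarzschild parameters $(m,c)$ whose obstruction integrals are precisely the mass and center-of-mass mismatches. You have also correctly identified the analytic crux — uniformity in $R$ of the weighted estimate for $L^*_{\hat g}$ on the rescaled annulus with a boundary-degenerate weight — which is exactly the technical heart of \cite{cor:schw}.
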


The general case of the constraints was addressed in \cite{cs:ak}, cf. \cite{cd}.   We define a family of solutions on the exterior of a fixed ball and smoothly parametrized on an open set $\mathcal{O}\subset \mathbb{R}^{10}$ to be \textit{admissible} if, with reference to a fixed coordinate chart near infinity, the family satisfies (\ref{rt}) locally uniformly, and the map $\Theta:\mathcal{O}\rightarrow \mathbb{R}^{10}$ which associates to each member of the family its energy-momenta $(m,P,J,mc)$ is a homeomorphism onto an open subset.  We note that slices in Kerr form an admissible family, parametrized by the total mass $m$, the angular momentum parameter $a$, and an element in the Poincar\'{e} group to represent Euclidean motions of the asymptotic coordinate system as well as boosts, cf. \cite{cd}.

\begin{Theorem}\label{main2} Let $(M,g,\pi)$ be any asymptotically flat solution of the vacuum constraints.  Given any $\epsilon >0$, there is a solution $(\overline{g},\overline{\pi})$ within $\epsilon$ of $(g,\pi)$ (in a weighted norm) and whose ADM energy-momentum $(E,P)$ is within $\epsilon$ of that of $(g,\pi)$, so that near infinity, $(\overline{g}, \overline{\pi})$ agrees with a member of an admissible asymptotic model family, for example a boosted space-like slice in Kerr.
\end{Theorem}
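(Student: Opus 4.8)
The plan is to adapt the Corvino--Schoen localized deformation method to the full constraint operator $\Phi(g,\pi)=\bigl(R(g)+\tfrac12(\tr_g\pi)^2-|\pi|_g^2,\ \Div_g\pi\bigr)$, the same circle of ideas that underlies Theorem~\ref{thm:schw} in the time-symmetric case. First I would reduce to data with \emph{harmonic asymptotics}: by the density theorem cited to \cite{cs:ak}, after an arbitrarily small perturbation (costing at most $\epsilon/2$ in the weighted norm and $\epsilon/2$ in the energy-momentum) we may assume $(g,\pi)$ already has harmonic asymptotics, so that on $\{|x|>r_0\}$ we have $g=u^4g_E$ and $\pi=u^2\mathcal{L}_{g_E}X$ with the expansions (\ref{haxp}) for $(u,X)$, their odd parts expanded one order further. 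This makes the asymptotic data explicitly polyhomogeneous, which is what we need to quantify the size of the gluing error.

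Next, the gluing construction on an annulus. Fix a large radius $R$, set $A_R=\{R\le|x|\le 2R\}$, and choose a member $(g_\lambda,\pi_\lambda)$ of an admissible model family---e.g.\ boosted Kerr slices---with parameters $\lambda\in\mathcal{O}\subset\mathbb{R}^{10}$ initially equal to the energy-momenta $(m,P,J,mc)$ of $(g,\pi)$, which is possible since $\Theta$ is a homeomorphism onto an open set. Using a cutoff $\chi$ equal to $1$ for $|x|\le R$ and $0$ for $|x|\ge 2R$, form $\hat g=\chi g+(1-\chi)g_\lambda$ and $\hat\pi=\chi\pi+(1-\chi)\pi_\lambda$. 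Then $\Phi(\hat g,\hat\pi)$ is supported in $A_R$, and because $g$ and $g_\lambda$ agree to leading order in their Schwarzschild/Kerr asymptotics (mass at order $|x|^{-1}$ and dipole at order $|x|^{-2}$, matched by the choice of $\lambda$), one estimates $\|\Phi(\hat g,\hat\pi)\|$ to decay like a fixed power of $R$ in an appropriate weighted norm on $A_R$; rescaling $A_R$ to the unit annulus makes all subsequent estimates uniform in $R$.

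The correction step: I want $(h,w)$ supported in $A_R$ with $\Phi(\hat g+h,\hat\pi+w)=0$. The linearization $D\Phi$ at $(\hat g,\hat\pi)$ has overdetermined-elliptic formal adjoint $D\Phi^*$---at flat data $D\Phi^*(N,Y)=\bigl(-(\Delta N)g_E+\Hess N,\ -\tfrac12 L_Yg_E\bigr)$---so $D\Phi\circ D\Phi^*$ is a determined, elliptic, formally self-adjoint operator, fourth order in $N$ and second in $Y$. Following Corvino--Schoen I would introduce weighted spaces on $A_R$ whose weights degenerate at $\partial A_R$ (so integration by parts produces no boundary terms) and prove a coercivity estimate for $D\Phi\circ D\Phi^*$ modulo its finite-dimensional approximate kernel $\mathcal{K}$---the space of approximate KIDs on $A_R$, which is $O(R^{-q})$-close to the $10$-dimensional space spanned by $\{1,x^1,x^2,x^3\}$ in $N$ together with the Euclidean translations and rotations in $Y$. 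One then solves $D\Phi\bigl(D\Phi^*(N,Y)\bigr)=-\Phi(\hat g,\hat\pi)+(\text{$10$-dim obstruction})+(\text{quadratic error})$ by a contraction-mapping iteration, using that the error is small for $R$ large and that $\Phi$ obeys quadratic remainder estimates, and sets $(h,w)=D\Phi^*(N,Y)$. To kill the remaining $10$-dimensional obstruction: pairing $\Phi(\hat g+h,\hat\pi+w)$ against $\mathcal{K}$ reproduces, via a boundary-integral identity of the type (\ref{bdyint}) with the flux term $\mathcal{B}(R)$, the differences between the Poincar\'e charges of $(g,\pi)$ and of $(g_\lambda,\pi_\lambda)$ up to lower-order error; since the model family is admissible, i.e.\ $\Theta:\lambda\mapsto(m,P,J,mc)$ is a homeomorphism onto an open set, the map $\lambda\mapsto(\text{projection of the corrected error onto }\mathcal{K})$ is a small perturbation of a homeomorphism near the original parameter, so a Brouwer degree argument supplies a $\lambda$ for which the projection vanishes. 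This yields $\Phi(\hat g+h,\hat\pi+w)=0$ with $(\bar g,\bar\pi)$ equal to $(g,\pi)$ on $\{|x|\le R\}$, equal to the admissible model on $\{|x|\ge 2R\}$, within $\epsilon$ of $(g,\pi)$ in the weighted norm, and---because $\lambda$ stays close to the original energy-momenta and $\Theta$ is continuous---with ADM energy-momentum within $\epsilon$ of the original.

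The main obstacle is the interplay between the finite-dimensional KID obstruction and the model-family parameters: one must (i) show the approximate kernel of $D\Phi^*$ on the degenerately weighted annulus is genuinely $10$-dimensional and close to the span of $\{1,x^1,x^2,x^3\}$ together with the Euclidean Killing fields, (ii) prove the coercivity estimate for $D\Phi\circ D\Phi^*$ modulo that kernel with constants uniform under the rescaling $A_R\to$ unit annulus---the technical heart, a delicate weighted elliptic/Poincar\'e estimate---and (iii) verify that the pairing of the constraint error against $\mathcal{K}$ reduces, to leading order, to the map $\Theta$ of the admissible family, so that admissibility really does let one solve the last $10$ equations. The presence of the $\pi$-dependence (the momentum constraint block) also requires care since $\Phi$ is not a pure scalar-curvature operator, but the block-triangular structure of $D\Phi^*$ at near-flat data keeps the analysis parallel to the time-symmetric case.
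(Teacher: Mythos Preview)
Your proposal is correct and follows essentially the same strategy as the paper's sketch: glue to a model on a far annulus $A_R$, exploit the overdetermined-ellipticity of $D\Phi^*$ in degenerately weighted spaces to correct the constraints modulo the ten-dimensional KID obstruction, identify that obstruction via the boundary identity (\ref{bdyint}) with the change in Poincar\'e charges across $\partial A_R$, and eliminate it by a degree argument over the parameters of the admissible family. Your preliminary reduction to harmonic asymptotics is not emphasized in the paper's sketch (though it is used in the original \cite{cs:ak}), and the paper phrases the linear solvability variationally---minimizing $\int_\Omega(\tfrac12\rho|D\Phi^*u|^2-u\cdot f)\,dv_g$---rather than through a coercivity estimate for $D\Phi\circ D\Phi^*$, but these are equivalent formulations of the same argument.
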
 

We now discuss the main ideas of the proof.  In a given asymptotically flat end, we pick an asymptotically flat coordinate chart, and at a large coordinate radius $|x|=R$, we patch our given data to that of the model using a cutoff function in the annulus $A_R$ from $|x|=R$ to $|x|=2R$.  Since the data is approaching the flat data, the gluing produces an approximate solution of the constraints.  At this point we seek to perturb this to an exact solution.  This is often done using the conformal method; however, the conformal factor induces a global change, albeit a small one far away from the gluing region, and we seek to keep the data \emph{unchanged} outside the annulus $A_R$.  The key that enables one to do this is the underdetermined nature of the constraints, i.e. the overdetermined-ellipticity of the adjoint of the linearized constraint operator.  In fact, the construction relies heavily on linear elliptic estimates for this adjoint, as we now discuss.  

\subsubsection{Localized scalar curvature deformation}
Motivated by the preceding discussion, we study the following \emph{localized scalar curvature deformation problem}: Given a smooth domain $\Omega\subset (M,g)$ and a compactly contained sub-domain $\Omega_0$, for $f$ supported in $\Omega_0$ and sufficiently small, find a tensor $h$ supported in $\overline{\Omega}$ so that $R(g+h)=R(g)+f$, with $h$ small depending on $f$.  A natural way to approach this, in the spirit of Fischer-Marsden \cite{fm:def}, is to study the linearization of the scalar curvature operator. 

Let $L_g$ be the linearized scalar curvature operator: $L_g (h) =\frac{d}{dt}\big|_{t=0} R(g+th)$.  Then we have $L_g(h)=-\Delta_g (\tr_g h)+\Div_g(\Div_g h)-h\cdot \Ricc(g)$, and thus $L_g^* u = -(\Delta_g u ) g+ \Hess_g u -u \Ricc(g)$.  By taking the trace of this equation, we see $\tr_g(L_g^*u)=-(n-1) \Delta_g u - u R(g)$, so that $\Delta_g u$, and hence the full Hessian, is controlled pointwise by $L_g^* u$ and $u$.  So we get the following estimate, valid on any domain (\emph{without imposing boundary conditions}): 
\begin{equation} \label{be}
\|u\|_{H^2(\Omega)} \leq C \left( \|L_g^*u\|_{L^2(\Omega)} + \|u\|_{H^1(\Omega)}\right). 
\end{equation}
We can replace the lower-order $H^1$-term by the $L^2$-norm on smooth bounded domains follows by using interpolation on the compact inclusions $H^2\hookrightarrow H^1 \hookrightarrow L^2$.  In fact if we restrict $u$ to lie in a subspace transverse to the kernel of $L_g^*$ (generically there is no kernel, but we will be interested in the case where $g$ is the flat metric, which has a four-dimensional kernel), then we have an estimate with no lower-order term at all.  

This estimate can be used to get analogous estimates in weighted spaces.  Indeed, let $\rho$ be a smooth, positive function on $\Omega$, with $\rho(x)=(d(x))^N$ near $\partial \Omega$, where $d=d(x)$ is distance of $x$ to $\partial \Omega$, and $N$ will be taken to be sufficiently large (one may also take $\rho(x)=e^{-1/d(x)}$ near $\partial \Omega$); we take $\rho$ to depend only on $d$ and to be monotonic in $d$, leveling off to a positive constant away from $\partial \Omega$.  From the above, choosing $u$ to lie in a fixed subspace transverse to the kernel of $L_g^*$ (if it is non-trivial), we get an estimate $\|u\|_{H^2(\Omega_{\epsilon})} \leq C \|L_g^*u\|_{L^2(\Omega_{\epsilon})}$, for small $\epsilon\geq 0$, where $\Omega_{\epsilon}=\{ x\in \Omega: d(x)> \epsilon\}$ and $C$ is uniform in $\epsilon$. For example, such a uniform estimate holds for metrics near the flat metric $g_E$, for functions $u$ transverse to the kernel $K_0=\mbox{span}\{1, x^1, x^2, x^3\}$.  By multiplying the square of this estimate by $\rho'(\epsilon)$ and integrating by parts, we get the weighed estimate $\|u\|_{H^2_{\rho}(\Omega)} \leq C' \|L_g^*u\|_{L^2_{\rho}(\Omega)}$.  There are similar weighted estimates for $D\Phi^*$, which are slightly harder to prove, but hold nonetheless. 

We illustrate the usefulness of such weighted estimates by presenting a simpler example which W. Qiu \cite{q:cag} adapted from the analysis in \cite{cor:schw}.  Here we look at the divergence operator, whose adjoint is $-\nabla$, say at the flat metric.  Suppose we want to solve $\Div X = f$ on an open ball $B$.  Suppose that $f$ has compact support in $B$, and that we want $X$ to decay to zero at $\partial B$.  Of course a necessary condition on $f$ is that its integral over the ball vanish, $\int_B f\; dv_e=0$, i.e., $f$ must be orthogonal to the kernel of the adjoint of $\Div$.  We solve this problem variationally: let $\mathcal{F}(u):= \int_B \left( \frac{1}{2} |\nabla u|^2 \rho -  uf \right)dv_e$, where $\rho$ is as above; for the problem at hand, we could follow \cite{q:cag} and also choose $\rho$ to be identically 1 on the support of $f$, but this is not essential.  We minimize $\mathcal F (u)$ over all $u\in H^1_{loc}(B)$ so that $u^2\rho$ and $|\nabla u|^2 \rho$ are integrable (these integrals can be used to define the (squares of) the weighted $L^2_{\rho}$ and $H^1_{\rho}$-norms), and so that $u$ is $L^2$-orthogonal to $\zeta$, where $\zeta\geq 0$ is a smooth bump function with compact support.   Since $u$ is taken to lie in a space transverse to the kernel of $\nabla$, we have an estimate (as above) of the form \[\|u\|_{H^1_{\rho}(B)}\leq C \|\nabla u\|_{L^2_{\rho}(B)}\; ,\]  which is simply a weighted Poincar\'{e} inequality, cf. \cite{q:cag}.  Thus \[\mathcal{F}(u)\geq \frac{1}{2C^2} \|u\|^2_{H^1_{\rho}(B)}- \|u\|_{L^2_{\rho}(B)}\|f\|_{L^2_{\rho^{-1}}(B)}.\]  Standard functional analysis yields a minimizer $u$.   The Euler-Lagrange equation is simple to compute, and indeed letting $X=-\rho \nabla u$ we see that $\Div X =f + \lambda \zeta$ for some $\lambda\in \mathbb{R}$.  By the elliptic regularity, we see that $u$ is smooth in the interior of $B$, and moreover, by integrating the Euler-Lagrange equation against the constant function 1, we see that $\lambda=0$ and so $X$ solves the original equation (the integration by parts is justified by the pointwise estimate below).  Now although $u$ might not decay at $\partial B$, we have that $X$ does, by the Schauder estimates. Indeed, let $x\in B$ have distance $d$ to $\partial B$, and let $B_1$ and $B_2$ be the balls of radius $\frac{d}{3}$ and $\frac{2d}{3}$ about $x$; we can take $d<1$ so small that $B_2$ is outside the support of $f$.  We have the elliptic equation $\Delta u +\frac{\nabla \rho}{\rho} \cdot \nabla u= -\frac{f}{\rho}$.  We take $\rho$ equal to a large power $N$ of the distance to the boundary, near the boundary.  Then by standard elliptic estimates, we have, with $\psi=\psi(n, k, \alpha)$ and using $f=0$ on $B_2$, and $\int_{B_2}u^2 dv_e \leq Cd^{-N} \int_{B_2} u^2 \rho\; dv_e$,
\begin{eqnarray*}
\|X\|_{C^{k,\alpha}(B_1)}= \|\rho \nabla u\|_{C^{k,\alpha}(B_1)} & \leq & C d^{N-\psi} \left(\| f\rho^{-1}\|_{C^{k-1,\alpha}(B_2)} + \|u\|_{L^2(B_2)}\right) \\ 
&\leq & Cd^{\frac{N}{2}-\psi} \|u\|_{L^2_{\rho}(B)} \leq  Cd^{\frac{N}{2}-\psi} \|f\|_{L^2_{\rho^{-1}}(B)} .
\end{eqnarray*}
The last inequality follows from the inequality $\mathcal{F}(u)\leq 0$ and the weighted estimate.  From this, we see that we can solve for $X$ decaying at the boundary along with as many derivatives as we like, provided we choose $N$ large enough.  

We now return to the scalar curvature problem.  Suppose that $L_g^*$ has trivial kernel on $\Omega$.  Essentially the same analysis, using the weighed estimates and the functional $\mathcal F (u)= \int\limits_{\Omega} (\frac{1}{2} \rho |L_g^*u|^2 - uf) \; dv_g$, allows use to solve $L_g(\rho L_g^* u)=f$ variationally.  For instance, if $f$ is small, then $h=\rho L_g^*u$ will be small enough so that $g+h:=g+\rho L_g^*u$ is a metric, and we have $R(g+h)= R(g)+f + Q(h)$. 
Thus we see that in case $L_g^*$ has trivial kernel, then we can choose $f$ small and solve for the localized scalar curvature deformation, at least to first order.  We can iterate linear corrections to solve the non-linear problem, i.e. we can solve the localized scalar curvature deformation problem as posed above.  On the other hand, metrics like the Euclidean metric, for which $L_g^*$ has non-trivial kernel, are called \emph{static}, and elements in the kernel are sometimes called \emph{static KIDs}.  Static metrics are very special; for instance they must have constant scalar curvature.  This local deformation result cannot hold at static metrics; for example in connection with the Positive Mass Theorem,  the flat three-torus does not admit any positive scalar curvature metric, and analogously, there is no compactly supported deformation of the Euclidean metric with non-negative scalar and non-constant scalar curvature.  The round sphere is also static, and it was only recently that Brendle, Marques and Neves provided a counterexample to the \emph{Min-Oo conjecture}: they construct metrics on $\mathbb S^n$, $n\geq 3$, which have $R(g)\geq n(n-1)$, \emph{with strict inequality holding at some point},  with $g$ a unit round metric in a neighborhood of a hemisphere \cite{min-oo}. 

\subsubsection{Localized deformations, global charges, and the proofs of Theorems \ref{thm:schw} and \ref{main2}}

To prove Theorems \ref{thm:schw} and \ref{main2}, we patch together two solutions of the constraints in annulus $A_R$ near infinity, producing an approximate solution.  The data there is close to $(g_E,0)$, so to get uniform estimates on the adjoint of the linearized constraint operator, we have to work transverse to $K_0=\mbox{ker } L^*$, respectively $K=\mbox{ker } D\Phi^*$.  Indeed, using the methods discussed above along with suitable projections, we can, at the linear level, solve the constraints up to the kernel $K_0$, respectively $K$.  By iterating the linear solvability in a Newton-Picard iteration, we can produce $\tilde{g}$ (respectively $(\tilde g, \tilde{\pi})$) which solves $R(\tilde{g})\in \zeta K_0$, respectively $\Phi(\tilde{g},\tilde{\pi})\in \zeta K$ (for $\zeta$ a suitable bump function), and which agrees with the glued approximate solution except in the gluing region $A_R$; that is, it is exactly the model solution on $E_{2R}$, exactly the original on $M\setminus E_R$, and a perturbation of the glued solution in $A_R$.  The key is that we have used the overdetermined-elliptic estimates to solve for a compactly supported (in $\overline{A_R}$) perturbation.  

So we now have our constraint operator with values in $\zeta K$; of course we want this value to be zero.  Naturally (as with the divergence example above), we take our constraint values and integrate against $K$ to see what we get.  For example, for the kernel element $1\in \text{ker } L^*$, we get (compare (\ref{bdyint}))
\begin{align*} 
 \int_{A_R} R(\tilde{g})\; dv_e \approx \int_{A_R} \sum_{i,j}(\tilde{g}_{ij,ij}-\tilde{g}_{ii,jj})\; dv_e &=  \int_{\partial A_R} \sum_{i}(\tilde{g}_{ij,i}-\tilde{g}_{ii,j}) \nu_e^j d\mu_e\\ &\approx 16\pi \Delta m,
 \end{align*}
where $\Delta m$ is the mass of the outer Schwarzschild minus the mass of the original data.  Analogously, integration against a linear function $x^k\in K_0$ yields the change $16\pi \Delta(mc)$. 
For the Killing field $X=\frac{\partial}{\partial x^1}\in \text{ker } D\Phi^*$, we have $$\int_{A_R} (\Div_{\tilde g}\tilde{\pi})_i X^i \; dv_e\approx \int_{A_R} (\Div_{g_E}\tilde{\pi})_i X^i \; dv_e= \int_{\partial A_R} \tilde{\pi}_{ij}X^i\nu_e^j \; d\mu_e\approx 8\pi \Delta P_1.$$  Integrating against a rotation field gives the change in angular momentum.
 
Thus we see that the constraint equations imply an asymptotic conservation law across the boundary of the annulus.  The conserved quantity is exactly the boundary integral coming from the divergence theorem applied to the inner product of the constraint operator and the generator of the asymptotic symmetry, i.e. the element of the kernel of the adjoint of the linearized constraint operator.  For our gluing procedure, we see that our model near infinity has to roughly preserve the conserved quantities coming from the kernel of the linearized constraints.  It is this kernel, then, that dictates which part of the asymptotics has to be preserved in our construction, and so our model solutions have to constitute a family which exhibits all possible values of the asymptotic quantities in an effective way.  Thus far, our gluing and perturbation procedure depended only on the asymptotic flatness, and so we were not quite able to solve the constraints.  We now pay attention to which one of the model family we have glued on, and show that one of these works to solve the constraints.  
 
In the time-symmetric vacuum case, we are trying to make the metric $\tilde{g}$ be scalar-flat.  
Of course, what we actually have is a \emph{family} of metrics, one for each of the Schwarzshild family $( 1+ \frac{m}{2|x-c|})^4 g_E$ we glue on, parametrized by the mass $m$ and center of mass $c$ (relative to the fixed asymptotic chart).  Our procedure, then, produces a map from the parameter values to the values $R(\tilde{g})\in \zeta K_0$, a map between four-dimensional Euclidean spaces.  The map roughly looks like (up to scaling and constant factors) \begin{eqnarray} \label{pmap0} (m,c)\mapsto \left( m-m_0, mc-m_0c_0 \right)\; ,\end{eqnarray} where $m_0$ and $c_0$ are the mass and center of the original metric; note we can always translate coordinates so that $c_0=0$.  This is the leading term, and the error term goes to zero as $R\rightarrow +\infty$.  For large enough $R$, then, the parameter map is a small perturbation of a local diffeomorphism which covers the origin at the initial mass and center, and thus by degree theory, the actual map hits zero for some $(m,c)$ near $(m_0,c_0)$.  For these parameter values, $R(\tilde{g})=0$.  The analogous results hold for the general case. 

In summary, we see analytically that the obstruction to doing local gluing comes from the cokernel of the linearized operator, and that this kernel exactly corresponds to quantities that must be (almost) conserved in the procedure.  Thus to meet this constraint, we need a good family of models at infinity for which the conserved quantities are effective parameters.  We also emphasize that we need to make good approximate solutions, which we can do here since the data tends to flat near infinity.  

Note that this general model for a gluing proceedure is well known and often used throughout geometric analysis.  The key novel feature introduced here is to exploit the underdetermined nature of the constraint equations (via the overdetermined-ellipticity of the adjoint of the linearized constraint operator) to solve for the perturbations with compact support.  

We make two further remarks.  We saw earlier that if we deform the Euclidean metric with a small compactly supported tensor $h$, we can impose the constraints in the conformal class of $g_E+h$, and the asymptotic expansion of the conformal factor has a positive coefficient of the $|x|^{-1}$-term.  In this way, we put the appropriate mass at infinity through the conformal factor.  For the asymptotic gluing, the localized deformations do not quite solve the problem: we also need to put the appropriate mass (and other charges) at infinity.  
Rather than doing this via a conformal deformation, which would induce a global perturbation, we establish this deformation by choosing an member of our model family of solutions at infinity which captures this necessary deformation in these asymptotic parameters.  Finally, we remark that this analysis has been extended to allow for the gluing of multiple asymptotic ends satisfying the vacuum constraints into a single asymptotic end, which may be interpreted as initial data for the gravitational $N$-body problem \cite{cci, cci2}.

\subsubsection{Asymptotically simple vacuum space-times} In the 1960's, Penrose \cite{pen:as} proposed a model of isolated gravitational systems (asymptotic simplicity) based on the conformal compactification of Minkowski space, cf. \cite{wa:gr}.  Indeed if we write the Minkowski metric as $\eta=-dt^2+dr^2+r^2g_{\mathbb S^2}$, where $g_{\mathbb S^2}$ is the metric on a round unit two-sphere, we can then introduce advanced and retarded null coordinates $v=t+r$, $u=t-r$, and re-scaled time and radial coordinates $T=\arctan v+ \arctan u$, $R=\arctan v -\arctan u$.  In these coordinates we obtain, for $\Omega^{-2}=\frac{1}{4}(1+v^2)(1+u^2)$,  $$\eta= \Omega^{-2} \left( -dT^2+dR^2+\sin^2(R) \; g_{\mathbb S^2}\right)= \Omega^{-2} (-dT^2+g_{\mathbb S^3}).$$ Thus we have an embedding of Minkowski space-time into the Einstein static universe $\mathbb R \times \mathbb S^3$, which is a conformal isometry.  The image of Minkowski space is a pre-compact subset, the boundary of which represents infinity, time-like, space-like, and null, of Minkowksi space-time.   It is the essential features of the construction, which represents infinity conformally faithfully but at a finite distance, that Penrose tries to capture in his definition of asymptotic simplicity.  

The Penrose proposal has had enormous influence on the study of gravitational radiation, so one would naturally like to establish the existence of a rich class of such space-times, for instance through stability results which yield new examples through perturbation.  Friedrich addressed the stability problem by rewriting the Einstein equation to emphasize the conformal structure, and he obtained a small data, semi-global stability result: for hyperboloidal data suitably close to a given hyperboloidal data set in Minkowski space (intersecting future null infinity), the resulting solution of the initial-value problem for the Einstein vacuum equation admits a conformal compactification to the future \cite{fr:as}, cf. \cite{fr:rad, fr:ga} and the more recent work of Anderson and Chru\'{s}ciel \cite{ac:as}.  On the other hand, work by Friedrich \cite{fr:ninf} and more recent work by Valiente Kroon \cite{vk:as} show that the structure of initial data near space-like infinity must indeed be special for the evolution to admit a smooth conformal compactification. 

The stability results of Friedrich provide a way to construct non-trivial asymptotically simple space-times: control the asymptotics near spatial infinity on an asymptotically flat initial data set, in such a manner that the data will evolve to a space-time with suitable hyperboloidal slices (to the future and the past), then apply the stability result to evolve from here.  In fact, Cutler and Wald \cite{cw:em} use this method to produce examples of such space-times for the Einstein-Maxwell equations.  They construct a family of time-symmetric initial data (generated by scaling a specially constructed magnetic field), which approaches Minkowski data as the mass tends to zero, such that outside a fixed ball each member of the family is identically an end of the standard asymptotically flat initial slice of a Schwarzschild space-time.  This still left open the question of whether there exist non-trivial \emph{purely radiative} space-times, i.e. non-trivial \emph{vacuum} space-times (with $\Lambda=0$) which admit a conformal compactification in the sense of Penrose.  In recent years this question has been resolved, and in fact, we have the following theorem.

\begin{Theorem}
There exists an infinite-dimensional family of solutions $(\mathbb R^3, g)$ of the time-symmetric vacuum constraint equations (with $\Lambda=0$) whose Einstein evolution is an asymptotically simple space-time, i.e. the maximal Ricci-flat space-time $(\mathbb R^4, \bar g)$ with the three-geometry $(\mathbb{R}^3, g)$ as a totally geodesic Cauchy surface admits a conformal compactification in the sense of Penrose.
\end{Theorem}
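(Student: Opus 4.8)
The plan is to combine the localized scalar curvature deformation of Theorem~\ref{thm:schw} with Friedrich's semi-global stability theorem for the hyperboloidal initial value problem~\cite{fr:as}. The family will be produced by exploiting the enormous interior freedom available once one only needs to control the geometry near infinity, as in the gluing philosophy described above.

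First I would build a large supply of scalar-flat seed metrics that are \emph{exactly} Schwarzschild outside a compact set and globally $C^k$-close to the Euclidean metric. For a small, compactly supported symmetric tensor $h$ on $\mathbb R^3$, the metric $g_E+h$ has positive $\mathcal Y_{AF}$, so by the asymptotically flat Lichnerowicz theory there is a unique $\phi>0$ with $\phi-1$ in a weighted space solving $\Delta_{g_E+h}\phi=\tfrac18 R(g_E+h)\phi$; the metric $\phi^4(g_E+h)$ is then asymptotically flat, scalar-flat, and $C^k$-close to $g_E$ when $h$ is small (with small mass). Applying Theorem~\ref{thm:schw} to this metric yields a scalar-flat $\bar g_h$ which agrees with it on a ball $\{|x|\le R_h\}$, is identical to a time-symmetric slice of a Schwarzschild space-time of some small mass $m_h$ outside a larger ball, and remains globally $C^k$-close to $g_E$. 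Letting $h$ range over an infinite-dimensional set of compactly supported tensors, restricted so that the interior geometries are pairwise non-isometric (a generic condition), gives an infinite-dimensional family of initial data sets.

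Next I would study the maximal vacuum Cauchy development $(\mathbb R^4,\bar{\mathbf g}_h)$ of each $(\mathbb R^3,\bar g_h)$. Since $\bar g_h$ coincides with an exterior Schwarzschild slice near space-like infinity, uniqueness of the development (Choquet-Bruhat and Geroch) forces the domain of dependence of that exterior region to embed isometrically into the Schwarzschild space-time of mass $m_h$, which for small $m_h$ carries a smooth conformal boundary $\mathscr I^+$ along this region. Separately, global Cauchy stability for the Einstein equation shows that $\bar{\mathbf g}_h$ is close to the Minkowski metric on any fixed compact region. I would then construct a hypersurface $\mathcal H_h$ in the development that is a standard hyperboloidal slice of Schwarzschild where it lies in the exterior region and transitions, in the interior, to a hypersurface close to a standard Minkowski hyperboloid; the conformally rescaled induced data on $\mathcal H_h$ is smooth up to its boundary on $\mathscr I^+$ and can be made as close as desired, in Friedrich's norm, to the rescaled data of a hyperboloid in Minkowski space. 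Friedrich's theorem~\cite{fr:as} then yields a smooth conformal compactification of the development to the future with complete $\mathscr I^+$; combining this with the Schwarzschild region near $i^0$, and using time-symmetry of the data to obtain $\mathscr I^-$, one concludes that $(\mathbb R^4,\bar{\mathbf g}_h)$ is asymptotically simple in the sense of Penrose.

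The hard part will be the hyperboloidal matching in the third step: producing a single hyperboloidal initial data set whose conformally rescaled version is simultaneously smooth up to $\mathscr I^+$ \emph{and} small in the topology required by Friedrich's stability theorem, interpolating between the exact Schwarzschild behavior near null infinity and the nearly-Minkowskian behavior in the interior. It is precisely here that the sharp conclusion of Theorem~\ref{thm:schw}---that $\bar g_h$ is \emph{exactly} Schwarzschild near infinity, not merely close to it---is indispensable, since the regularity of $\mathscr I$ near space-like infinity is known to be delicate and to fail for generic asymptotically flat data~\cite{fr:ninf, vk:as}.
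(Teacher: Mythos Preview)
Your overall architecture is the same as the paper's: produce scalar-flat data on $\mathbb R^3$ that is \emph{exactly} Schwarzschild near infinity and globally close to Euclidean, evolve, extract a hyperboloidal slice that is exact Schwarzschild near $\mathscr I^+$ and nearly Minkowskian in the interior, and invoke Friedrich's stability. The difficulty you have not addressed is the one the paper isolates explicitly: the gluing radius $R_h$ produced by Theorem~\ref{thm:schw} is \emph{not} uniformly bounded as the seed data approaches the flat metric. The parameter map \eqref{pmap0} underlying the degree argument has Jacobian proportional to the mass $m_0$, so as $m_0\to 0$ the map degenerates in the center-of-mass directions and the annulus $A_R$ must be pushed outward to make the error terms beat this degeneracy. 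In your scheme nothing prevents $R_h\to\infty$ as $h\to 0$.

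This undermines precisely the step you flag as hardest. Your hyperboloidal slice $\mathcal H_h$ meets the exact Schwarzschild region only for $r\gtrsim R_h$, hence at times $t\sim R_h$; in the interior you appeal to finite-time Cauchy stability, whose constants depend on the size of the region. If $R_h$ is unbounded you cannot conclude that the conformally rescaled hyperboloidal data lies in the fixed Friedrich neighborhood for an infinite-dimensional family of $h$'s simultaneously. The paper resolves this in two ways: Chru\'sciel--Delay impose a parity symmetry on the seed (forcing the center of mass to vanish identically, so the degenerating directions disappear), while Corvino chooses $h$ with $L_{g_E}(h)=0$ and nontrivial TT-part, uses the Brill--Deser second variation to get $m(\epsilon)\gtrsim\epsilon^2$, and then shows $|x|^2\big|v_\epsilon-\tfrac{m(\epsilon)}{2|x|}\big|\lesssim m(\epsilon)$, which pins down the center of mass uniformly and allows gluing outside a \emph{fixed} ball. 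Either mechanism is what your argument is missing; once you insert one of them, the rest of your outline goes through essentially as in the paper.
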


To prove this, one shows the existence of families of solutions of the time-symmetric vacuum constraints approaching the flat metric which can be perturbed to nearby solutions that are Schwarzschild outside of \textit{fixed} radius.   The difficulty which arises is that in applying the methods of \cite{cor:schw}, the resulting center of mass may drift outward as the mass tends to zero, cf. (\ref{pmap0}), and one must get estimates to control this.  Chru\'{s}ciel and Delay \cite{cd:as} first proved this statement by applying the methods of \cite{cor:schw} to produce an infinite-dimensional space of \emph{parity symmetric} solutions; under parity, the center of mass is always zero.  A different approach is taken in \cite{cor:as}, where the idea is to construct a large family of time-symmetric initial data sets  on $\mathbb R^3$ with a harmonically flat end, tending toward the Euclidean metric, for which the higher-order asymptotics are suitably bounded in terms of the mass $m$.  This allows the required estimates to scale down with $m$ tending to 0, while retaining an estimate on the center of mass.   

We briefly sketch the argument, details for which can be found in \cite{cor:as}.  We remark that the construction not only yields examples, but establishes what may be interpreted as a (weak) stability result for asymptotic simplicity at the Euclidean metric.  First recall the \emph{York} decomposition \cite{York74, Cantor77}: a symmetric tensor $h$ which decays at infinity (in a weighted space) can be written (for an appropriate vector field $X$) as $h=h^{TT}+\mathcal D(X)+\frac{1}{3}\tr(h) g_E$, where $h^{TT}$ is \emph{transverse-traceless} (trace-free and divergence free), and $\mathcal D$ is the conformal Killing operator introduced earlier, and we compute at the Euclidean metric.  Let $h$ be a compactly supported solution of the linearized scalar curvature constraint $L(h)=0$ at the Euclidean metric, so that the TT-part $h^{TT}$ from the York decomposition is non-trivial.   We note that a TT tensor with respect to the flat metric is in the kernel of $L$, and it is known that there is an infinite-dimensional space of compactly supported TT tensors at the flat metric, cf. \cite{be:tt, cor:as, df:tt}.  If we let $g_{\epsilon}=u_{\epsilon}^4 (g_E+ \epsilon h)$, and $m(\epsilon)=m(g_{\epsilon})$, then $L(h)=0$ implies $m'(0)=0$.  We can estimate the mass $m(\epsilon)$ from below, following the Brill-Deser approach to the local positivity of the ADM mass \cite{bd:pmt} (\emph{cf}. \cite{cbm:locm}).  Indeed, the second variation of the mass is given by $16 \pi m''(0)= \frac{1}{2} \int\limits_{\mathbb{R}^3} |\nabla h^{TT}|^2 \; dv_e$.  This allows us to estimate $m(\epsilon)\gtrsim \epsilon^2$, while the fact that $R(g_E+\epsilon h)\lesssim \epsilon^2$ yields a weighted estimate of $v_{\epsilon}:=(u_{\epsilon}-1)$, for large $|x|$: $|x|^2\big|v_{\epsilon}-\frac{m(\epsilon)}{2|x|}\big|\lesssim \epsilon^2 \lesssim m(\epsilon)$.  This is the key estimate to control the center of mass.  

\subsection{Conformal gluing constructions}
\label{IMPgluing} In~\cite{imp}, Isenberg, Mazzeo and Pollack
developed a gluing construction (often referred to as ``IMP gluing") for initial data sets
satisfying certain natural non-degeneracy assumptions.  The
perspective taken there is to work within the conformal
method, and thereby establish a gluing theorem for solutions of
the determined system of PDEs given by (\ref{vacmom:noncmc})
and (\ref{vacham:noncmc}).  This was initially done only within
the setting of constant mean curvature initial data sets and in
dimension $n=3$ (the method was extended to all higher
dimensions in~\cite{IMaxP}).  The construction of~\cite{imp}
allowed one to combine initial data sets by taking a connected
sum of their underlying manifolds, to add wormholes (by
performing codimension-$3$ surgery on the underlying,
connected, three-manifold) to a given initial data set, and to
replace arbitrary small neighborhoods of  points in an initial
data set with asymptotically hyperbolic ends.

In \cite{LMaz2}, building on work he had done for constant scalar curvature metrics \cite{LMaz1, LMaz3}, Mazzieri generalized the IMP gluing to the setting of the Schoen-Yau and Gromov-Lawson surgery result discussed earlier.  Namely he showed that under certain conditions, CMC solutions of the vacuum constraint equations may be combined into new solutions by gluing along a common (i.e. isometrically embedded with diffeomorphic normal bundles) submanifold of codimension greater than or equal to 3.  

In~\cite{IMP2} the IMP gluing construction was extended to only
require that the mean curvature be constant in a small
neighborhood of  the point about which one wanted to perform a
connected sum. This extension enabled the authors to show that
one can replace an arbitrary small neighborhood of a generic
point in any initial data set with an asymptotically {\em flat}
end.  Since it is easy to see that CMC solutions of the vacuum
constraint equations exist on any compact manifold~\cite{Witt},
this leads to the following result which asserts that there are
no topological obstructions to asymptotically flat solutions of
the vacuum constraint equations.

\begin{theorem}[\cite{IMP2}]
Let $\hyp$ be any closed $n$-dimensional manifold, and let $p\in\hyp$. Then $\hyp\setminus\{p\}$
admits an asymptotically flat initial data set satisfying the vacuum constraint equations.
\end{theorem}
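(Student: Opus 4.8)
The statement follows by assembling two facts recalled above: the unconditional existence of constant-mean-curvature (CMC) vacuum initial data on a closed manifold, and the localized gluing construction of \cite{IMP2}, which replaces an arbitrarily small neighborhood of a generic point of an initial data set by an asymptotically flat end. I will assume $\hyp$ is connected; if $\hyp$ is disconnected one runs the argument on the component containing $p$ and leaves standard closed CMC data on the other components.

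First I would invoke the existence result cited as \cite{Witt}: $\hyp$ carries a smooth solution $(g_0,K_0)$ of the vacuum constraints with $\tr_{g_0}K_0$ equal to a constant $\tau_0$. Concretely, one may run the conformal method of Theorem \ref{TJimC} from an arbitrary background metric $\tthreeg$, a nonzero constant $\tau_0$, and a suitable trace-free symmetric tensor $\tsigma$: whatever the sign of $\mathcal Y([\tthreeg])$, one of the four cases applies and yields a solution. The point to retain is that CMC data has mean curvature constant in a neighborhood of \emph{every} point of $\hyp$, which is precisely the hypothesis the \cite{IMP2} construction requires at the gluing point. The only remaining requirement of that construction is the usual IMP non-degeneracy condition — roughly, the absence of Killing initial data localized near the point. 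Since $\hyp$ is connected it is homogeneous under its diffeomorphism group, so after replacing $(g_0,K_0)$ by its push-forward under a diffeomorphism of $\hyp$ carrying a generic (admissible) point to $p$, I may assume $p$ itself is admissible; if $(g_0,K_0)$ were non-generic in this respect, a small perturbation of the free data $(\tthreeg,\tsigma,\tau_0)$ removes the obstruction while preserving the CMC property.

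Now apply the \cite{IMP2} gluing at $p$: excise a small geodesic ball $B_\rho(p)$, glue on an asymptotically flat end, and solve the conformal system \eqref{vacmom:noncmc}--\eqref{vacham:noncmc} — which departs from the CMC model only inside the gluing region — on the resulting manifold, producing a genuine vacuum solution $(\bar g,\bar K)$. The underlying space is $\hyp\setminus B_\rho(p)$ with a half-open collar $S^{n-1}\times[1,\infty)$ attached along its boundary sphere, hence is diffeomorphic to $\hyp\setminus\{p\}$, and $(\bar g,\bar K)$ is asymptotically flat by construction. This is the asserted initial data set.

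I expect essentially all of the work to be internal to the cited gluing theorem, and that is where the main obstacle lies: one must run the conformal method across a connected sum with a degenerating neck, which demands uniform control — invertibility modulo a uniformly small, finite-dimensional cokernel — of the linearized Lichnerowicz operator and of the momentum operator $\div_{\tthreeg}(\tcalD\;\cdot)$ as $\rho\to0$, together with a matching-of-asymptotics step ensuring that the glued-in end is genuinely asymptotically flat. The non-degeneracy hypothesis is exactly what forces the uniform estimate, and the mild failure of the glued data to be globally CMC is absorbed perturbatively because it occurs only where the data is close to the CMC model. Granting \cite{IMP2} and \cite{Witt}, however, the proof is the short assembly above; the one genuinely new point — compatibility of CMC data with the genericity hypothesis — is settled by the homogeneity of connected manifolds under diffeomorphisms.
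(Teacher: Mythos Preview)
Your proposal is correct and follows essentially the same route as the paper: combine the existence of CMC vacuum data on any closed manifold \cite{Witt} with the \cite{IMP2} gluing, which replaces a small neighborhood of a generic point (where the mean curvature is constant) by an asymptotically flat end. Your use of diffeomorphism homogeneity of a connected manifold to carry a generic admissible point to the prescribed point $p$ is a natural way to fill in the one detail the paper leaves implicit in the word ``generic.''
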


\subsection{Initial data engineering}
\label{IDE} The gluing constructions of~\cite{imp}
and~\cite{IMP2} are performed using the determined elliptic
system provided by the conformal method, which necessarily
(due to the unique continuation property for this system)
leads to a global deformation, small away from the gluing site, of the initial data set.  Now, the ability of the Corvino-Schoen asymptotic
gluing technique to establish compactly supported deformations
invited the question of whether these conformal gluings could
be localized.  This was answered in the affirmative
in~\cite{cd} for CMC initial data under the additional,
generically satisfied~\cite{CHBeignokids}, assumption that
there are no KIDs (cokernel of the linearized operator, cf. Section \ref{sec:gc}) in a neighborhood of the gluing site.
Namely, Chru\'sciel and Delay showed that, assuming the absence of KIDs, an additional perturbation can be made to localize the IMP conformal gluing.  This left open the question of whether generic, localized gluing could be performed from the start, with making any of the assumptions (like CMC) which were present in the IMP gluing from the dependence on the conformal method.

In~\cite{CIP:PRL, cip}, this question was answered, and the IMP gluing 
was substantially improved upon,
by combining the gluing construction of~\cite{imp} together
with the Corvino-Schoen gluing technique of~\cite{cor:schw, cd},
to obtain a localized gluing construction in which the only
assumption is the absence of KIDs near points.  For a given
$n$-manifold $\hyp$ (which may or may not be connected)  and
two points $p_a\in \hyp$, $a=1,2$, we let $\tilde\hyp$ denote
the manifold obtained by replacing small geodesic balls around
these points by a neck  $\mathbb S^{n-1}\times I$. When $\hyp$ is
connected this corresponds to performing codimension-$n$
surgery on the manifold.  When the points $p_a$ lie in
different connected components of $\hyp$, this corresponds to
taking the connected sum of those components.  

\begin{Theorem}[\cite{CIP:PRL, cip}]
\label{Tlgluingv} Let $(\hyp, \threeg, K)$ be a smooth vacuum
initial data set, with $M$ not necessarily connected, and
consider two open sets $\Omega_a\subset \hyp$, $a=1,2$, with
compact closure and smooth boundary, such that
the set
of KIDs within  each $\Omega_a$  is trivial. Then for all $p_a\in \Omega_a$, $\epsilon >0$, and $k\in \N$
there exists a smooth vacuum initial data set
$(\tilde\hyp,\threeg(\epsilon),K(\epsilon))$ on the glued
manifold $\tilde\hyp$ such that
$(\threeg(\epsilon),K(\epsilon))$ is $\epsilon$-close to
$(\threeg, K)$ in a $C^k\times C^k$ topology away from
$B(p_1,\epsilon)\cup B(p_2,\epsilon)$. Moreover
$(\threeg(\epsilon),K(\epsilon))$ coincides with $(\threeg, K)$
away from $\Omega_1\cup \Omega_2$.
\end{Theorem}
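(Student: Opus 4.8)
The plan is to combine the conformal (IMP-type) gluing near the points $p_a$, which effects the change of topology, with the Corvino--Schoen localized deformation, which removes the global tail of the perturbation so that the new data agrees \emph{exactly} with $(\threeg,K)$ off $\Omega_1\cup\Omega_2$. The hypothesis that there are no KIDs in $\Omega_a$ is precisely what makes the second step unobstructed: unlike the asymptotically flat setting of Theorem \ref{main2}, where the KIDs at infinity force a degree-theoretic count over a model family, here the cokernel of $D\Phi^*$ on the relevant domain is trivial and one can simply solve.

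First we would apply the IMP gluing near $p_a$: fixing $0<r_0<\operatorname{dist}(p_a,\partial\Omega_a)$ with $r_0<\epsilon$, excise small geodesic balls $B(p_a,r_0)$, insert a long thin neck $\mathbb S^{n-1}\times[-T,T]$ carrying a rescaled standard model (a Schwarzschild-type neck for the time-symmetric part together with the appropriate second fundamental form), and run the conformal method on $\tilde\hyp$ as in Section \ref{IMPgluing}. At small scale the data near $p_a$ is $C^k$-close to flat data $(g_E,0)$, and hence nearly CMC, so this is a perturbation of gluing two flat ends along a standard neck; the IMP analysis then produces an \emph{exact} vacuum initial data set $(\hat\threeg,\hat K)$ on $\tilde\hyp$ which is $C^k$-close to $(\threeg,K)$ everywhere outside $B(p_1,\epsilon)\cup B(p_2,\epsilon)$ (the conformal factor and shift tend to the reference ones, away from the neck, as $T\to\infty$). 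By unique continuation this solution differs from $(\threeg,K)$ everywhere, which is exactly what the remaining steps repair.

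Next we would re-glue $(\hat\threeg,\hat K)$ to $(\threeg,K)$ inside $\Omega_a$. Choose nested domains $B(p_a,\epsilon)\subset \Omega_a'''\Subset\Omega_a''\Subset\Omega_a$ and, using cutoff functions on the annulus $\Omega_a''\setminus\Omega_a'''$, form $(\threeg_T,K_T)$ equal to $(\hat\threeg,\hat K)$ on $\Omega_a'''$ and to $(\threeg,K)$ on $\hyp\setminus\Omega_a''$. Then $\Phi(\threeg_T,K_T)$ vanishes except on the interpolation annuli, where it is bounded by $\|(\hat\threeg,\hat K)-(\threeg,K)\|_{C^{k+2}}$ and so as small as we wish. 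Now pick a smooth domain $\mathcal U_a$ with the interpolation annulus $\subset\mathcal U_a$ and $\overline{\mathcal U_a}\subset\Omega_a$: since $\Omega_a$ carries no nontrivial KID, a limiting argument (normalized KIDs on a shrinking family of such domains subconverge, by the elliptic estimates for the overdetermined KID system, to a KID on a punctured neighborhood, which extends across the puncture by removable singularity in codimension $n\ge 3$, contradicting the hypothesis) shows $\mathcal U_a$ carries none either. Hence the weighted Corvino--Schoen estimate $\|u\|_{H^2_\rho(\mathcal U_a)}\le C\|D\Phi^*u\|_{L^2_\rho(\mathcal U_a)}$ holds with \emph{no} lower-order term and with $C$ uniform in $T$, where $\rho$ vanishes to high order at $\partial\mathcal U_a$. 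As in the divergence example and the localized scalar-curvature discussion above, minimizing $\mathcal F(u)=\int_{\mathcal U_a}\big(\tfrac12\rho|D\Phi^*u|^2-\langle u,\Phi(\threeg_T,K_T)\rangle\big)\,dv_{\threeg}$ produces $u_a$, and $\delta_a:=\rho\,D\Phi^*u_a$ is a deformation supported in $\overline{\mathcal U_a}\subset\Omega_a$, vanishing with as many derivatives as we like at $\partial\mathcal U_a$, which solves the linearized problem; a Newton--Picard iteration upgrades this to $\Phi\big((\threeg_T,K_T)+\delta_a\big)=0$, converging because we arranged $\|\Phi(\threeg_T,K_T)\|$ (hence $T$) appropriately. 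The result $(\threeg(\epsilon),K(\epsilon)):=(\threeg_T,K_T)+\delta_1+\delta_2$ is then a smooth vacuum initial data set on $\tilde\hyp$; it coincides with $(\threeg,K)$ off $\Omega_1\cup\Omega_2$ since each $\delta_a$ is supported in $\Omega_a$ and the re-gluing was confined to $\Omega_a''$; and it is $C^k$-$\epsilon$-close to $(\threeg,K)$ off $B(p_1,\epsilon)\cup B(p_2,\epsilon)$ since there $(\hat\threeg,\hat K)$, the cutoff interpolation, and $\delta_a$ are all $C^k$-small perturbations of $(\threeg,K)$.

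We expect the main difficulty to be making the two gluing schemes compatible: the Corvino--Schoen correction must be carried out in weighted spaces adapted to a domain inside $\Omega_a$, with the estimate for the full operator $D\Phi^*$ (harder than for $L_g^*$ alone) uniform as the neck degenerates, and one must confirm that triviality of KIDs on the open set $\Omega_a$ propagates to the smaller, punctured domains on which the correction is actually performed. The degeneration of the weight $\rho$ near $\partial\mathcal U_a$ is harmless because the constraint defect is supported well inside $\mathcal U_a$; the real bookkeeping lies in tracking the support and the weighted size of $\delta_a$ through the nonlinear iteration while invoking only the stated no-KID hypothesis, which moreover is generic, cf.\ \cite{CHBeignokids}.
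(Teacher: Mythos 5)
Your two-step plan --- IMP conformal gluing near $p_a$ to effect the surgery, followed by a Corvino--Schoen localized correction inside $\Omega_a$ to kill the global tail --- matches the high-level structure the paper describes, namely a mixture of the techniques of \cite{IMaxP,imp} with those of \cite{cs:ak, cor:schw, cd}. But there is a genuine gap in your first step. You assert that because the data near $p_a$ is $C^k$-close to flat at small scale it is ``nearly CMC, so this is a perturbation of gluing two flat ends,'' and then invoke the IMP analysis. The IMP machinery (and its extension in \cite{IMP2}) requires the mean curvature to be \emph{constant} near the gluing points --- that is exactly what decouples \eqref{vacmom:noncmc} from \eqref{vacham:noncmc} --- and ``near-CMC at small scales'' does not deliver this, nor is there an off-the-shelf perturbation theory that promotes near-CMC local data to exact vacuum solutions across a degenerating neck. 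The paper's proof handles this by a step you omit entirely: one first replaces a neighborhood of $p_a$ by a genuinely CMC slice in the local Cauchy development, using Bartnik's solution of the Plateau problem for prescribed mean curvature spacelike hypersurfaces in a Lorentzian manifold \cite{bartnik:variational}, and only then runs the conformal gluing. Without this preparation the first step of your argument does not go through for general (non-CMC) $(\threeg,K)$.

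Beyond that, the outline is broadly consistent with the paper's description. The second step does rest on triviality of KIDs in $\Omega_a$ to get weighted coercivity for $D\Phi^*$ with no cokernel to compensate (so no degree-theoretic balancing of charges, in contrast to the asymptotically flat gluing of Theorem \ref{main2}), and the Newton--Picard iteration and support bookkeeping you describe are in the right spirit. Two points you gloss over are flagged by the paper as where the real technical work lies: the generalization of the IMP analysis to compact manifolds with boundary, and keeping the weighted estimates uniform as the neck parameter degenerates. Your removable-singularity argument that KID triviality on $\Omega_a$ passes down to a suitable interior domain $\mathcal U_a$ is plausible as a sketch but would need careful justification.
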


This result is sharp in the following sense: first note that,
by the positive mass theorem, initial data for Minkowski
space-time cannot locally be glued to anything else  which is
non-singular and vacuum. This meshes with the fact that for
Minkowskian initial data, we have non-trivial KIDs on
any open set $\Omega$. Next,  recall that by the results
in~\cite{CHBeignokids}, the no-KID hypothesis in Theorem
\ref{Tlgluingv} is generically satisfied. Thus, the result can
be interpreted as the statement that for generic vacuum initial
data sets the local gluing can  be performed around arbitrarily
chosen points $p_a$. In particular the collection of initial
data with generic regions $\Omega_a$ satisfying the hypotheses
of Theorem~\ref{Tlgluingv} is not empty.

The proof of Theorem \ref{Tlgluingv} is a mixture of gluing
techniques developed in~\cite{IMaxP,imp} and those
of~\cite{cs:ak, cor:schw, cd}. In fact, the proof
proceeds initially via a generalization of the analysis
in~\cite{imp} to compact manifolds with boundary.  In order to
have CMC initial data near the gluing points, which the
analysis based on~\cite{imp} requires,  one makes use of the
work of Bartnik~\cite{bartnik:variational} on the plateau
problem for prescribed mean curvature spacelike hypersurfaces
in a Lorentzian manifold.

One application of Theorem \ref{Tlgluingv}
concerns the question of the existence of CMC slices in
space-times with compact Cauchy surfaces.
In~\cite{bartnik:cosmological}, Bartnik showed that there exist
maximally extended, globally hyperbolic solutions of the
Einstein equations \textit{with dust} which admit no CMC
slices. Later, Eardley and Witt (unpublished) proposed a scheme
for showing that similar vacuum solutions exist, but their
argument was incomplete. Using Theorem \ref{Tlgluingv}, we obtain: 
\begin{Corollary}[\cite{CIP:PRL, cip}]
\label{noCMCslices} There exist maximal globally hyperbolic
vacuum space-times with compact Cauchy surfaces which contain
no compact spacelike hypersurfaces with constant mean
curvature.
\end{Corollary}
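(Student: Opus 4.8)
The plan is to complete the scheme of Eardley and Witt, using Theorem~\ref{Tlgluingv} to supply the gluing step that was missing. The underlying principle is: glue together two vacuum initial data sets on closed manifolds whose maximal globally hyperbolic developments impose \emph{incompatible} sign constraints on the mean curvature of compact spacelike hypersurfaces, and then observe that the glued data set inherits both constraints locally, so that no single constant value of the mean curvature can occur.

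First I would fix the two building blocks $(\hyp_a,\threeg_a,K_a)$, $a=1,2$, on closed $3$-manifolds. The key requirement is that the maximal development $\mathcal{M}_1$ of the first has the property that every compact spacelike hypersurface of $\mathcal{M}_1$ has mean curvature that is somewhere \emph{strictly negative}, while every compact spacelike hypersurface of $\mathcal{M}_2$ has mean curvature that is somewhere \emph{strictly positive}. For $\mathcal{M}_1$ one can take (a perturbation of) a vacuum Kasner space-time $\mathbb{T}^3\times(0,\infty)$, whose constant-time slices form a foliation by constant-mean-curvature hypersurfaces with mean curvature ranging through the negative reals; a touching/maximum-principle comparison against this foliation at the point of extremal time coordinate shows that \emph{every} compact spacelike hypersurface of $\mathcal{M}_1$ has mean curvature somewhere negative, and $\mathcal{M}_2$ is taken to be the time-reverse. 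Two technical points must be arranged here: the building blocks must be free of KIDs near suitable points $p_a$ so that Theorem~\ref{Tlgluingv} applies — which is generic by \cite{CHBeignokids} — and the relevant singularity structure (and hence the mean-curvature dichotomy) must persist under the small generic perturbation needed to kill the KIDs; the latter follows from Cauchy stability of the development together with the stability of the crushing behavior for the models used.

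Next, choosing small neighborhoods $\Omega_a\ni p_a$ with no KIDs, I would apply Theorem~\ref{Tlgluingv} to obtain a smooth vacuum initial data set $(\tilde\hyp,\threeg(\epsilon),K(\epsilon))$ on the connected sum $\tilde\hyp=\hyp_1\#\hyp_2$ which \emph{coincides exactly} with the original data on $\hyp_a\setminus\Omega_a$, and let $(\tilde{\mathcal{M}},\tilde{\fourg})$ be its maximal globally hyperbolic vacuum development; since $\tilde\hyp$ is closed, $\tilde{\mathcal{M}}$ has compact Cauchy surfaces. Because the glued data agrees exactly with that of $\hyp_a$ away from $\Omega_a$, the Cauchy development in $\tilde{\mathcal{M}}$ of $\hyp_a\setminus\Omega_a$ is isometric to the corresponding region of $\mathcal{M}_a$, so $\tilde{\mathcal{M}}$ contains isometric copies of large portions of both $\mathcal{M}_1$ and $\mathcal{M}_2$. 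Now suppose for contradiction that $\Sigma\subset\tilde{\mathcal{M}}$ is a compact spacelike hypersurface of constant mean curvature $\tau$. Using the classical fact that in a globally hyperbolic space-time with compact Cauchy surface every compact spacelike hypersurface is itself a Cauchy surface, $\Sigma$ is Cauchy; hence its restriction to the region over $\hyp_a\setminus\Omega_a$ is a constant-mean-curvature spacelike hypersurface-with-boundary lying inside a portion of $\mathcal{M}_a$. Comparing this restriction with the Kasner-type foliation of $\mathcal{M}_1$ forces $\tau<0$, while the same comparison in $\mathcal{M}_2$ forces $\tau>0$: a contradiction. Thus $\tilde{\mathcal{M}}$ admits no compact spacelike hypersurface of constant mean curvature, which is the assertion of Corollary~\ref{noCMCslices}.

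The main obstacle is the comparison argument in the \emph{gluing region} (the neck $\mathbb S^{2}\times I$ joining the two pieces). The restriction of $\Sigma$ to the region over $\hyp_a\setminus\Omega_a$ has a boundary, and its extremal time coordinate could be attained on that boundary — or $\Sigma$ could run out toward the Cauchy horizon of $\hyp_a\setminus\Omega_a$, where the geometry is no longer exactly that of $\mathcal{M}_a$ — so the touching comparison with the $\mathcal{M}_a$-foliation need not apply where one wants it. This is precisely the gap in the original Eardley–Witt scheme, and closing it requires the quantitative control over the glued solution near the neck provided by the construction underlying Theorem~\ref{Tlgluingv}: one must exhibit, throughout the (arbitrarily small) gluing region, barrier hypersurfaces whose mean curvature has the appropriate sign, or else arrange the gluing so that the relevant extrema are forced into the region where the metric is exactly that of $\mathcal{M}_1$ or $\mathcal{M}_2$. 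Granting this neck analysis, together with the existence of suitable KID-free building blocks, the contradiction above goes through.
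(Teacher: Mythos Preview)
Your overall strategy --- glue two vacuum data sets whose developments impose incompatible mean-curvature constraints on compact spacelike hypersurfaces --- is indeed that of \cite{CIP:PRL,cip}. But the proposal has a genuine gap and a misattribution.

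The Eardley--Witt incompleteness was not the neck comparison; it was the absence of any construction of \emph{vacuum} initial data on a connected sum with the required sign pattern on $\tr_g K$. Theorem~\ref{Tlgluingv} is exactly that missing construction, and this is what the survey means by ``Using Theorem~\ref{Tlgluingv}, we obtain.'' Once the glued data exist, the no-CMC conclusion follows from arguments already available from Bartnik's work \cite{bartnik:cosmological}; the gluing theorem does not supply, and is not needed to supply, ``barrier hypersurfaces across the neck.''

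Your comparison step --- restrict the putative CMC Cauchy surface $\Sigma$ to $D(\hyp_a\setminus\Omega_a)$ and touch it against the Kasner foliation there --- is where your proof is incomplete, and ``granting this neck analysis'' does not close it. In fact the restriction approach has a more basic problem than the boundary issue you flag: $\Sigma$ is a Cauchy surface for the \emph{glued} development, not for $D(\hyp_a\setminus\Omega_a)$, so $\Sigma\cap D(\hyp_a\setminus\Omega_a)$ may well be empty, or may exit through the Cauchy horizon in a way that yields no information. The argument in \cite{cip} does not localize $\Sigma$ to the pieces. One sign that a different mechanism is at work is that \cite{cip} requires the building blocks $\hyp_a$ to be topologically non-trivial (not $3$-spheres), a hypothesis your touching argument nowhere uses.
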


Compact Cauchy surfaces with constant mean curvature are useful
objects, as the existence of one such surface gives rise to a
unique foliation  by such surfaces~\cite{BF78}, and hence a
canonical choice of time function (often referred to as \emph{CMC time} or
\emph{York time}). Foliations by CMC Cauchy surfaces have also been
extensively used in numerical analysis to explore the nature of
cosmological singularities.  With this in mind, one natural and important question  is the extent to which space-times with
no CMC slices are common among solutions to the vacuum Einstein
equations with a fixed spatial  topology. It is, in particular,
expected that the examples constructed
in~\cite{cip,CIP:PRL} are not isolated.

\subsubsection{Non-zero cosmological constant}
 \label{ssSnonzeroLambda}
Gluing constructions have also been carried out  with a
non-zero cosmological constant~\cite{ChPollack,CPP,ChDelayAH}.
These constructions yield space-times which coincide,  in the
asymptotic region, with the corresponding black hole models.  For time-symmetric
slices of these space-times, the constraint equations reduce to
the equation  for constant scalar curvature $R=2\Lambda$.
Gluing constructions have been previously carried out  in this
context, especially in the case of $\Lambda>0$, but
in~\cite{ChPollack,CPP,ChDelayAH} the emphasis is on gluing
with compact support, in the spirit of the Corvino--Schoen technique.

The  time-symmetric slices  of the $\Lambda>0$ Kottler-Schwarzschild-de Sitter
space-times provide ``Delaunay" metrics (see~\cite{ChPollack}
and references therein), and the main result  of
\cite{ChPollack,CPP} is the construction of large families of
metrics with exactly Delaunay ends. When $\Lambda<0$ the focus
is on asymptotically hyperbolic metrics with constant negative
scalar curvature. With hindsight, within the family of Kottler
metrics with $\Lambda \in \R$ (with $\Lambda<0$ corresponding to the 
Schwarzschild-anti-de Sitter metrics, $\Lambda =0$  the Schwarzschild metrics and 
$\Lambda>0$ the Schwarzschild-de Sitter metrics), the gluing in the $\Lambda>0$
setting is technically easiest, while that with $\Lambda<0$ is
the most difficult. This is due to the fact that for
$\Lambda>0$, one deals with a single linearized operator with a
one-dimensional kernel, which corresponds to changing the Delaunay parameter; in the case $\Lambda=0$, the kernel is $(n+1)$--dimensional corresponding to changes in the mass and center of mass; while for $\Lambda<0$, one needs to
consider a \emph{one-parameter family} of operators, each having $(n+1)$--dimensional kernels
\cite{ChDelayAH}.
%


\end{document}